\documentclass[11pt,a4paper]{article}

\usepackage[T1]{fontenc}
\usepackage{lmodern}
\usepackage[utf8]{inputenc}

\usepackage[left=2.45cm, top=2.45cm,bottom=2.45cm,right=2.45cm]{geometry}
\usepackage{amsmath}
\usepackage{amssymb,amsthm,graphicx,xcolor,tikz}

\usepackage[british]{babel}

\usepackage[
	bookmarks=true,
	bookmarksnumbered=true,
	bookmarksopen=true,
	unicode=true,
	pdftoolbar=true,
	pdfmenubar=true,
	pdffitwindow=false,
	pdfstartview={FitH},
	pdftitle={},
	pdfauthor={},
	pdfsubject={},
	pdfcreator={},
	pdfproducer={},
	pdfkeywords={},
	pdfnewwindow=true,
	colorlinks=true,
	linkcolor=black,
	citecolor=black,
	filecolor=black,
	urlcolor=black]{hyperref}

\makeatletter
	\def\@cite#1#2{[\textbf{#1}\if@tempswa , #2\fi]}	
	\def\@biblabel#1{[#1]}								
\makeatother

\numberwithin{equation}{section}
\numberwithin{figure}{section}



\newtheorem {theorem}{Theorem}[section]

\newtheorem {lemma}[theorem]{Lemma}
\newtheorem {corollary}[theorem]{Corollary}

\theoremstyle{definition}

\newtheorem {remark}[theorem]{Remark}

\newcommand{\BB}{\mathbb{B}}
\newcommand{\EE}{\mathbb{E}}
\newcommand{\HH}{\mathbb{H}}
\newcommand{\NN}{\mathbb{N}}
\newcommand{\PP}{\mathbb{P}}
\newcommand{\RR}{\mathbb{R}}
\renewcommand{\SS}{\mathbb{S}}
\newcommand{\XX}{\mathbb{X}}

\newcommand{\cF}{\mathcal{F}}
\newcommand{\cE}{\mathcal{E}}
\newcommand{\cH}{\mathcal{H}}
\newcommand{\cK}{\mathcal{K}}
\newcommand{\cL}{\mathcal{L}}
\newcommand\cW{\mathcal{W}}

\DeclareMathOperator{\Vol}{Vol}
\DeclareMathOperator{\pos}{pos}
\DeclareMathOperator{\Var}{Var}
\DeclareMathOperator{\Gr}{Gr}
\DeclareMathOperator{\Wass}{Wass}
\DeclareMathOperator{\Lip}{Lip}

\DeclareMathOperator{\interior}{int}
\DeclareMathOperator{\Bu}{B}
\DeclareMathOperator{\HT}{HT}
\DeclareMathOperator{\bd}{bd}

\newcommand{\dint}{\mathrm{d}}

\newcommand{\bbinom}[2]{\left[\!\!\begin{array}{c}#1\\#2\end{array}\!\!\right]}
\newcommand{\tbbinom}[2]{\scalebox{0.75}{$\left[\!\!\renewcommand{\arraystretch}{0.6}\begin{array}{c}#1\\#2\end{array}\!\!\right]$}}

\newcommand\tinyonehalf{\scalebox{0.65}{$\frac{1}{2}$}}

\newcommand{\overbar}[1]{\mkern 3.5mu\overline{\mkern-3.5mu#1\mkern-.0mu}\mkern .0mu}

\usepackage{accents}
\DeclareMathSymbol{\widetildesym}{\mathord}{largesymbols}{"65}
\newcommand{\lowerwidetildesym}{%
    \text{\smash{\raisebox{-1.3ex}{$\widetildesym$}}}
}
\newcommand{\overtilde}[1]{%
    \mathchoice
        {\accentset{\displaystyle\lowerwidetildesym}{#1}\mkern -4mu}
        {\accentset{\textstyle\lowerwidetildesym}{#1}\mkern -4mu}
        {\accentset{\scriptstyle\raisebox{-0.3ex}{$\lowerwidetildesym$}}{#1}\mkern -4mu}
        {\accentset{\scriptscriptstyle\raisebox{-0.3ex}{$\lowerwidetildesym$}}{#1}\mkern -4mu}
}


\begin{document}

\title{\bfseries Asymptotic normality for random polytopes\\ in non-Euclidean geometries}

\author{%
    Florian Besau\footnotemark[1]%
    \and Christoph Th\"ale\footnotemark[2]%
}

\date{}
\renewcommand{\thefootnote}{\fnsymbol{footnote}}
\footnotetext[1]{%
    Vienna University of Technology, Austria. Email: florian.besau@tuwien.ac.at
}

\footnotetext[2]{%
    Ruhr University Bochum, Germany. Email: christoph.thaele@rub.de
}

\maketitle

\begin{abstract}\noindent
    Asymptotic normality for the natural volume measure of random polytopes generated
    by random points distributed uniformly in a convex body in spherical or hyperbolic spaces is proved.
    Also the case of Hilbert geometries is treated and central limit theorems in Lutwak's dual Brunn--Minkowski theory are established.
    The results follow from a central limit theorem for weighted random polytopes in Euclidean spaces.
    In the background are Stein's method for normal approximation and
    geometric properties of weighted floating bodies.

    \smallskip\noindent
    \textbf{Keywords.} central limit theorem, dual Brunn--Minkowski theory, dual volume, floating body, Hilbert geometry, hyperbolic space, random polytope,
        spherical space, Stein's method, stochastic geometry, weighted floating body.

    \smallskip\noindent
    \textbf{MSC 2010.} Primary  52A22, 52A55; Secondary 60D05, 60F05.
\end{abstract}

\section{Introduction and main results}

\subsection{Motivation and background}

The study of random convex hulls is one of the core topics in stochastic geometry and has deep connections
to convex geometry and asymptotic geometric analysis;
we refer to the monographs \cite{IsotropicConvexBodies,SchneiderWeil} and the many references listed therein.
The most intensively investigated model can be described as follows.
Fix a compact convex set $K$, i.e., a convex body, in $\RR^d$ for some space dimension $d\geq 2$,
and assume that its volume (Lebesgue measure) $\Vol(K)$ is strictly positive.
Then, for $n\in\NN$, let $X_1,\ldots,X_n$ be independent random points sampled in $K$ according
to the uniform distribution $\Vol(\,\cdot\,|K)=\Vol(\,\cdot\,\cap K)/\Vol(K)$
(we shall adopt this notation for any measure that appears in this paper).
The convex hull of $X_1,\ldots,X_n$ is denoted by $K(n)=[X_1,\ldots,X_n]$.
Let us assume from now on that the boundary $\bd K$ of $K$ is sufficiently smooth in the sense that $\bd K$
is a twice differentiable $(d-1)$-submanifold of $\RR^d$ with Gauss--Kronecker curvature $H_{d-1}(x)>0$
for any $x\in\bd K$.
In this situation it is well known that the expected volume difference $\Vol(K)-\EE\Vol(K(n))$ satisfies
\begin{equation*}
    \Vol(K)-\EE\Vol(K(n)) = c_d\, (\Vol(K)/n)^{2/(d+1)} \left[\int_{\bd K}H_{d-1}(x)^{1/(d+1)}\,\cH^{d-1}(\dint x)\right](1+o_n(1)),
\end{equation*}
as $n\to \infty$, where $\cH^{d-1}$ denotes the $(d-1)$-dimensional Hausdorff measure and $c_d\in(0,\infty)$
is an explicitly known constant only depending on the space dimension $d$, see e.g.\ the survey article \cite{BaranySurvey}.
Let us emphasize that the curvature integral in the last formula is Blaschke's classical affine surface area of $K$,
a quantity, which was very intensively studied in the literature
\cite{HaberlParpatits:2014, Hug:1996, Leichtweiss:1986, LudwigReitzner:1999, LudwigReitzner:2010, Lutwak:1991, Petty:1985, SchuttWerner:1990}.
For the variance of $\Vol(K(n))$ it is known from \cite{ReitznerCLT2005} that
\begin{equation*}
    c\,n^{-\frac{d+3}{d+1}}\leq\Var\Vol(K(n)) \leq C\,n^{-\frac{d+3}{d+1}}
\end{equation*}
for all sufficiently large $n$ and where $c,C\in(0,\infty)$ are constants not depending on $n$.
Using Stein's method for dependency graphs Reitzner in his seminal paper \cite{ReitznerCLT2005}
has proven that the sequence of the suitably normalized random variables $\Vol(K(n))$ converges in distribution to a standard Gaussian random variable $Z$, i.e.,
\begin{equation*}
    \frac{\Vol(K(n))-\EE\Vol(K(n))}{\sqrt{\Var\Vol(K(n))}}\overset{d}\longrightarrow Z,
\end{equation*}
as $n\to\infty$, where we write $\overset{d}\longrightarrow$ to indicate convergence in distribution.

\medskip

The main goal of the present paper is to prove a similar central limit theorem for random polytopes in non-Euclidean geometries.
In particular, our focus lies on random convex hulls generated by uniformly distributed random points in a compact convex subset
of a homogeneous space of constant curvature $+1$ or $-1$.
In addition, we shall treat random convex hulls in Hilbert geometries based on a strictly convex set.
This continues a recent and very active line of research in stochastic geometry on non-Euclidean models, see e.g.\ \cite{BaranyHugReitznerSchneider, BHPS:2019, BrauchartEtAl,Calkaetc,DeussHoerrmannThaele,
HugReichenbacher,HugThaele,MaeharaMartini18,PenroseYukichMf}.
In addition, we are able to prove central limit theorems for dual volumes of random polytopes, which arise in Lutwak's dual Brunn--Minkowski theory.
Our approach combines two ingredients, namely Stein's method for normal approximation of functionals
of binomial point processes developed by Chatterjee \cite{Chatterjee} and Lachi\'eze-Rey and Peccati \cite{LachPecc} as well as
the concept of weighted floating bodies introduced by Werner \cite{Werner2002} and studied further by Besau, Ludwig and Werner \cite{BesauLudwigWerner}.
The so-called Malliavin--Stein technique, which is in the background of \cite{LachPecc}, was invented roughly 10 years ago and has led to a very large number of new and
deep limit theorems especially for models in stochastic geometry.
We refer the reader to the volume \cite{PeccatiReitznerBook}, which contains a representative collection of survey articles in this direction.
This technique was for the first time combined in \cite{Thaele18,ThaeleTurchiWespi,TurchiWespi} with geometric properties of classical floating bodies
to give quick and streamlined proofs of central limit theorems for various functionals of random polytopes in $\RR^d$.
In the present paper we develop this idea further by working with \emph{weighted} floating bodies and dealing with
\emph{weighted} volumes of random convex hulls in $\RR^d$.
In fact, it will turn out that all our results for non-Euclidean geometries can be deduced from our limit theorem in $\RR^d$ by choosing particular weight functions.

\medskip

The remaining parts of this text are structured as follows.
Our main results for random polytopes in spherical spaces are presented in Section \ref{subsec:CLTsphere},
those for hyperbolic spaces in Section \ref{subsec:CLThyperbolic} and
the central limit theorem in Hilbert geometries in Section \ref{subsec:CLThilbert}.
Furthermore, in Section \ref{subsec:CLTdualVolume} we also establish a limit theorem for the expectation and
a central limit theorem for the dual volumes in Lutwak's dual Brunn--Minkowski theory.
As explained above, all these results will follow from a central limit theorem for weighted random polytopes in Euclidean spaces,
which is presented in Section \ref{sec:CLTweightedEuclidean}.
The proof of this result is based on the Stein's method for normal approximation of functionals
of binomial point processes as well as on geometric properties of weighted floating bodies.
Some essential background material on these two topics is summarized in Section \ref{Background}.
All proofs are collected in Sections \ref{sec:ProofWeighted} and \ref{sec:ProofNonEuclidean} at the end of the paper.

\subsection{Central limit theorems in spherical spaces}\label{subsec:CLTsphere}

\begin{figure}[t]
    \centering
    \begin{tikzpicture}
        \clip (-4,-1.4) rectangle (4,4);
        \node at (0,0) {\includegraphics[width=0.8\textwidth]{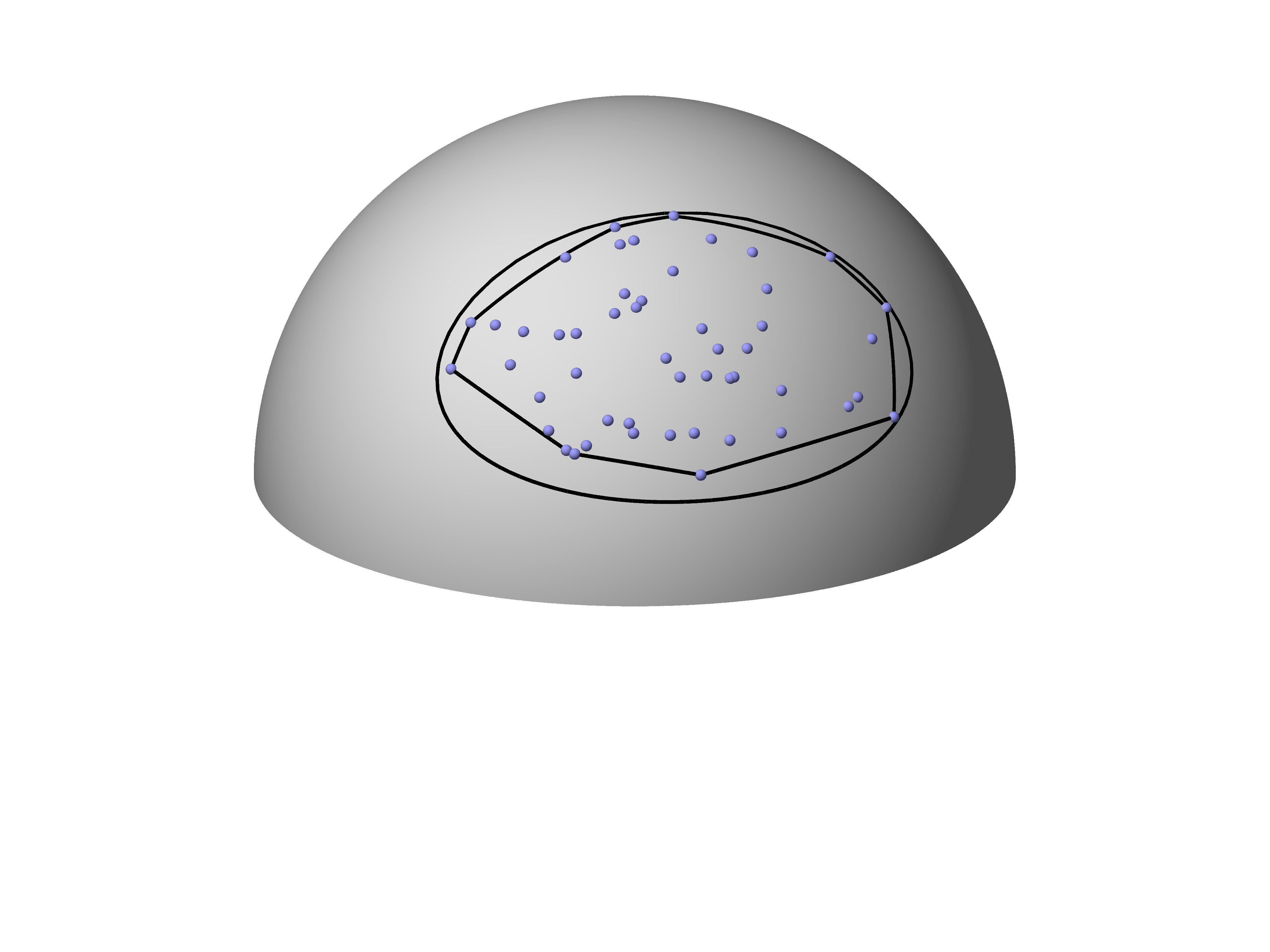}};
        \node at (-2.4,1) {$K$};
        \node at (0.8 ,1.8) {$K_s(n)$};
    \end{tikzpicture}
    \caption{Illustration of the spherical random polytope $K_s(n)$ generated in a spherical convex body $K$ contained in the open half-sphere $\SS^2_+$.}
    \label{fig:sphere}
\end{figure}

Let $d\geq 2$ and $\SS^{d}$ be the $d$-dimensional unit sphere in $\RR^{d+1}$.
A set $K\subset\SS^d$ is called spherically convex, provided that $K$ is contained in an open half-sphere and
if its positive hull
    $\pos K:=\{r x:x\in K,r\geq 0\}$
is a convex set in $\RR^{d+1}$ (what we call spherically
convex is called properly spherically convex by some authors).
By $\cK(\SS^d)$ we denote the space of all spherically convex sets.
Moreover, by $\cK_+^2(\SS^d)$ we denote the space of spherically convex sets
whose boundary is a twice differentiable $(d-1)$-submanifold of $\mathbb{S}^{d}$ and
such that the spherical Gauss--Kronecker curvature is strictly positive at any boundary point
(see, e.g., \cite[Section 4]{BesauWernerSpherical} for background material about spherical convex geometry).
For a set $B\subset\SS^d$, which is contained in an open half-sphere, we denote by
    $[B]_s:=[\pos B]\cap\SS^{d}$
its spherical convex hull.
Furthermore, $\Vol_s$ will denote the spherical Lebesgue measure on $\SS^{d}$.

The next theorem is the spherical analogue of the central limit theorem for the volume random polytopes in Euclidean spaces proved by Reitzner and
which was stated in the previous section.

\begin{theorem}\label{thm:Sphere}
    Let $K\in\cK_+^2(\SS^{d})$ and $X_1,X_2,\dotsc$ be a sequence of independent random points that are distributed in $K$ according to $\Vol_s(\,\cdot\,|K)$.
    For each $n\in\NN$ define $K_s(n):=[X_1,\dotsc,X_n]_s$. Then
    \begin{equation*}
        \frac{\Vol_s(K_s(n))-\EE\Vol_s(K_s(n))}{\sqrt{\Var\Vol_s(K_s(n))}}\overset{d}{\longrightarrow}Z,
    \end{equation*}
    as $n\to \infty$, where $Z$ is a standard Gaussian random variable.
\end{theorem}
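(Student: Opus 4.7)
The plan is to reduce the spherical statement to the Euclidean central limit theorem for weighted random polytopes announced in Section~\ref{sec:CLTweightedEuclidean}, using the gnomonic (central) projection as a transfer map. After a rotation one may assume that $K$ lies in the open upper hemisphere $\SS^d_+=\{x\in\SS^d:x_{d+1}>0\}$, and consider the diffeomorphism $g\colon\SS^d_+\to\RR^d$ defined by $g(x)=(x_1/x_{d+1},\dotsc,x_d/x_{d+1})$. The key geometric feature of $g$ is that it sends great-circle arcs to Euclidean line segments, hence maps spherically convex sets to Euclidean convex sets and, in particular, spherical convex hulls to Euclidean convex hulls. Writing $K':=g(K)$ and $Y_i:=g(X_i)$, one therefore obtains $g(K_s(n))=[Y_1,\dotsc,Y_n]$, the ordinary Euclidean convex hull of the transformed sample.

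Next, a change of variables shows that the pushforward of the spherical Lebesgue measure under $g$ has a smooth, strictly positive density $\phi(y)=(1+\|y\|^2)^{-(d+1)/2}$ with respect to Lebesgue measure on $\RR^d$. Consequently, for every Borel set $A\subset K'$,
\begin{equation*}
    \Vol_s(g^{-1}(A))=\int_A\phi(y)\,\dint y=:\Phi(A),
\end{equation*}
so that $\Vol_s(K_s(n))=\Phi([Y_1,\dotsc,Y_n])$, while the sampling law $\Vol_s(\,\cdot\,|K)$ pushes forward to the probability measure on $K'$ with density proportional to $\phi$. In this way $\Vol_s(K_s(n))$ is identified, distributionally, with the $\phi$-weighted volume of a Euclidean random polytope generated by i.i.d.\ $\phi$-weighted samples in $K'$; in particular, mean and variance are preserved under the identification, so that the standardising quotient in the theorem becomes the standardising quotient for the Euclidean weighted functional.

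It then remains to verify that $K'$ together with the weight $\phi$ satisfies the hypotheses of the Euclidean weighted CLT of Section~\ref{sec:CLTweightedEuclidean}. The weight $\phi$ is $C^\infty$ and bounded between positive constants on the compact set $K'$, and $K'$ is a convex body with $C^2$ boundary since $g$ is a smooth diffeomorphism. The step I expect to be the main obstacle is the curvature transfer: one must show that the strict positivity of the spherical Gauss--Kronecker curvature of $\bd K$ implies the strict positivity of the Euclidean Gauss--Kronecker curvature of $\bd K'$. This is a local computation, made plausible by the fact that $g$ is a projective map and hence preserves strict convexity of smooth hypersurfaces; carrying it out carefully in local coordinates yields $K'\in\cK$ with $C^2_+$ boundary in the Euclidean sense. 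Once this regularity is in hand, the Euclidean weighted CLT of Section~\ref{sec:CLTweightedEuclidean} delivers asymptotic normality of the standardised $\Phi([Y_1,\dotsc,Y_n])$, and therefore of $\Vol_s(K_s(n))$, completing the proof.
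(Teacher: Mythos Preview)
Your proposal is correct and matches the paper's proof essentially step for step: the paper likewise places $K$ in the open upper hemisphere, applies the gnomonic projection $g$, uses that the pushforward of $\Vol_s$ has Lebesgue density $(1+\|y\|^2)^{-(d+1)/2}$, identifies $\Vol_s(K_s(n))$ with $\Psi(\overbar{K}_\psi(n))$ for $\overbar{K}=g(K)$, and then invokes Theorem~\ref{thm:WeightedVolumeEuclidean}. For the curvature transfer you flag as the main obstacle, the paper simply cites \cite[Equation~(3.24)]{BesauWernerSpaceForms}, which gives an explicit formula relating the spherical and Euclidean Gauss--Kronecker curvatures under $g$ and confirms that positivity is preserved.
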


We emphasize that the result of Theorem \ref{thm:Sphere} is in sharp contrast to the recent developments
\cite{BaranyHugReitznerSchneider,KabluchkoMarynychThaeleTemesvari} around random spherical convex hulls on half-spheres.
In fact, if in Theorem \ref{thm:Sphere} the set $K$ is a closed half-sphere, then the central limit theorem breaks down.
More precisely, if $X_1,X_2,\ldots$ is a sequence of independent random points distributed according
to the normalized spherical Lebesgue measure on the half-sphere
    $\SS^{d}_+:= \{x=(x_1,\ldots,x_{d+1})\in\RR^{d+1}:x_{d+1}\geq 0\}$
then, as $n\to\infty$,
\begin{equation}\label{eq:CLTcones}
    n\left(\frac{\Vol_s(\SS^{d})}{2}-\Vol_s(K_s(n))\right) \overset{d}{\longrightarrow} \int_{\RR^d\setminus[\Pi_d]}\frac{\dint x}{\|x\|^{d+1}},
\end{equation}
where $[\Pi_d]$ denotes the convex hull in $\RR^d$ of a Poisson point process $\Pi_d$ on $\RR^d$ whose intensity measure has density
$x\mapsto \frac{2}{\Vol_s(\SS^{d})}\frac{1}{\|x\|^{d+1}}$, $x\in\RR^d\setminus\{o\}$,
with respect to the Lebesgue measure, see \cite[Theorem 2.6]{KabluchkoMarynychThaeleTemesvari}.
Clearly, the limiting random variable on the right hand side in \eqref{eq:CLTcones} is non-Gaussian.

\subsection{Central limit theorems in hyperbolic spaces}\label{subsec:CLThyperbolic}

\begin{figure}[t]
    \centering
    \begin{tikzpicture}
        \clip (-5,-0.5) rectangle (5,4.5);
        \node at (0,0) {\includegraphics[width=0.8\textwidth]{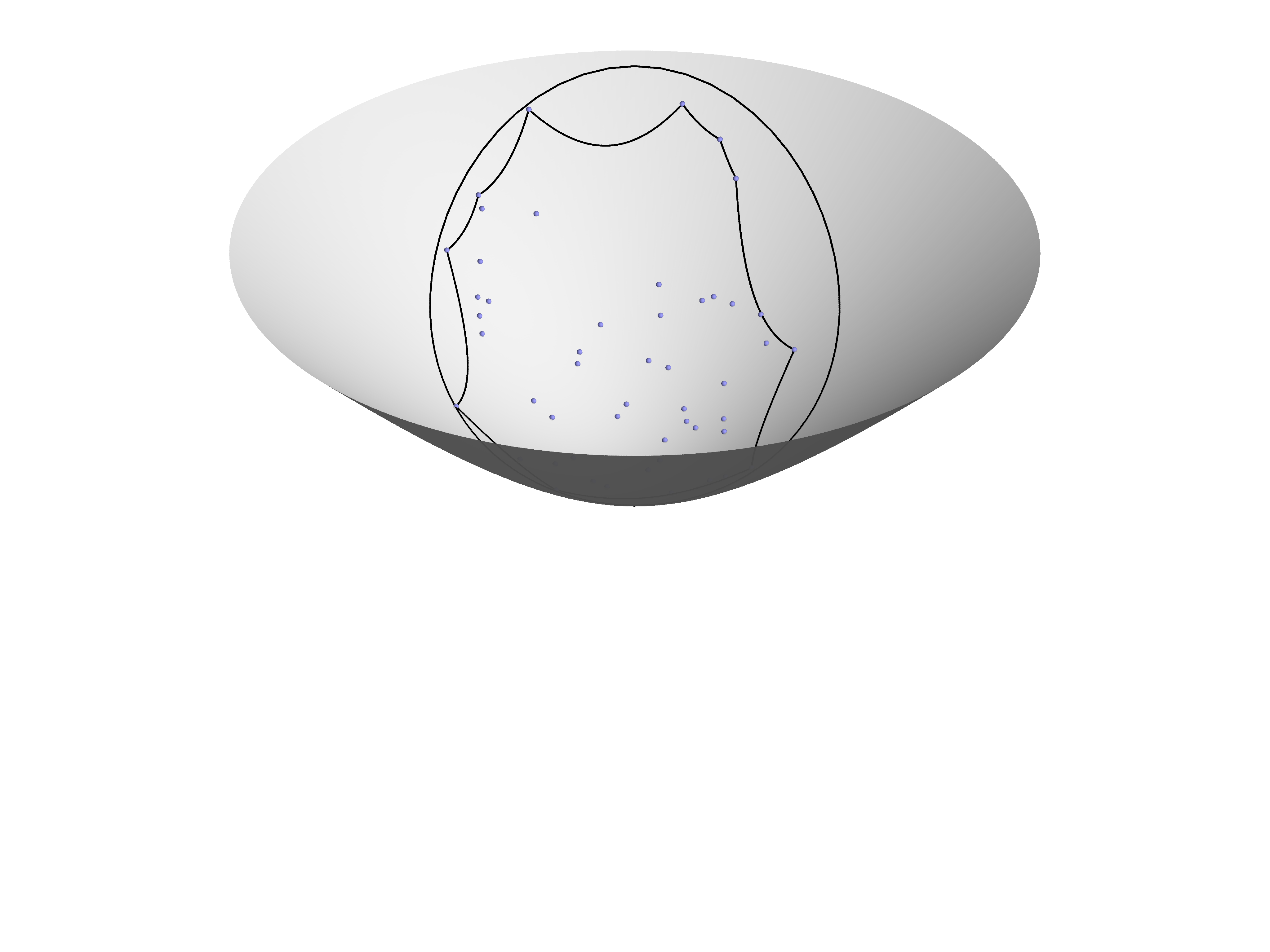}};
        \node at (2.5,2) {$K$};
        \node at (0 ,3) {$K_h(n)$};
    \end{tikzpicture}
    \caption{Illustration of the hyperbolic random polytope $K_h(n)$ generated in a hyperbolic convex body $K$
    in the hyperbolid model of the hyperbolic plane $\HH^2$.}
    \label{fig:hyperbolid}
\end{figure}

Having presented our result for spherical space, we turn now to the hyperbolic case.
We let $\RR^{d,1}$ for $d\geq 2$ be the $(d+1)$-dimensional Lorentz--Minkowski space,
by which we understand $\RR^{d+1}$ equipped with the indefinite inner product
    $x\circ x:=x_1^2+\ldots+x_d^2-x_{d+1}^2$, $x=(x_1,\ldots,x_{d+1})\in\RR^{d+1}$.
Our model for the hyperbolic space is the hyperboloid
    $\HH^d:=\{x\in\RR^{d,1}:x\circ x=-1,x_{d+1}>0\}$.
Similarly to the spherical set-up, a set $K\subset\HH^d$ is called hyperbolically convex if it is compact and if $\pos K$ is convex in $\RR^{d+1}$.
We let $\cK(\HH^d)$ be the space of hyperbolically convex sets and by $\cK_+^2(\HH^d)$
we denote the subspace of hyperbolically convex sets with the property that the hyperbolic Gauss--Kronecker curvature is strictly positive at every boundary point
(see, e.g., \cite[Section 3]{BesauWernerSpaceForms} for background material on hyperbolically convex sets).
For a set $B\subset\HH^d$ we denote by
    $[B]_h:=[\pos B]\cap\HH^d$
the hyperbolic convex hull of $B$.
Finally, by $\Vol_h$ we denote the natural hyperbolic volume measure on $\HH^d$, i.e.,
the $d$-dimensional Hausdorff measure on $\HH^d$ induced by the hyperbolic distance.

Our next theorem is the hyperbolic analogue of Theorem \ref{thm:Sphere}.

\begin{theorem}\label{thm:Hyperbolic}
    Let $K\in\cK_+^2(\HH^{d})$ and $X_1,X_2,\dotsc$ be a sequence of independent random points that are distributed in $K$
    according to $\Vol_h(\,\cdot\,|K)$.
    For each $n\in\NN$ define $K_h(n):=[X_1,\dotsc,X_n]_h$. Then
    \begin{equation*}
        \frac{\Vol_h(K_h(n))-\EE\Vol_h(K_h(n))}{\sqrt{\Var\Vol_h(K_h(n))}}\overset{d}{\longrightarrow}Z
    \end{equation*}
    as $n\to\infty$, where $Z$ is a standard Gaussian random variable.
\end{theorem}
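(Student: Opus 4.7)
The plan is to reduce the hyperbolic problem to the weighted Euclidean central limit theorem announced in Section \ref{sec:CLTweightedEuclidean}, exactly as the introduction suggests. The natural tool for this reduction is the Beltrami--Klein (projective) model of hyperbolic space, because it sends hyperbolic geodesics to Euclidean line segments and hence hyperbolic convex hulls to Euclidean convex hulls.

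More concretely, I would use the map $\pi\colon \HH^d\to \BB^d\subset\RR^d$ defined by $\pi(x_1,\dotsc,x_{d+1}) := (x_1/x_{d+1},\dotsc,x_d/x_{d+1})$, where $\BB^d$ denotes the open Euclidean unit ball. Standard computations in the Beltrami--Klein model (see e.g.\ the references in \cite{BesauWernerSpaceForms}) show that $\pi$ is a bijection that (i) carries hyperbolically convex subsets of $\HH^d$ to Euclidean convex subsets of $\BB^d$, and in particular $\pi([B]_h) = [\pi(B)]$ for any $B\subset\HH^d$, and (ii) pushes the hyperbolic volume measure $\Vol_h$ forward to the weighted Euclidean measure on $\BB^d$ with density
\begin{equation*}
    \phi(y) = (1-\|y\|^2)^{-(d+1)/2}, \qquad y\in\BB^d,
\end{equation*}
with respect to Lebesgue measure. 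Hence, writing $\overbar{K}:=\pi(K)$ and $\overbar{X}_i:=\pi(X_i)$, the random points $\overbar{X}_1,\dotsc,\overbar{X}_n$ are i.i.d.\ with density proportional to $\phi$ on $\overbar{K}$, the hyperbolic random polytope $K_h(n)$ corresponds to the Euclidean random polytope $\overbar{K}(n):=[\overbar{X}_1,\dotsc,\overbar{X}_n]$, and
\begin{equation*}
    \Vol_h(K_h(n)) = \int_{\overbar{K}(n)} \phi(y)\,\dint y.
\end{equation*}
In other words, $\Vol_h(K_h(n))$ is precisely a $\phi$-weighted volume of a Euclidean random polytope generated by $\phi$-weighted uniform sampling in $\overbar{K}$.

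Once this identification is set up, the remaining work is to verify the hypotheses of the weighted Euclidean central limit theorem of Section \ref{sec:CLTweightedEuclidean}. Two points have to be checked. First, since $K$ is compact in $\HH^d$, its image $\overbar{K}$ is a compact subset of the open ball $\BB^d$, so $\phi$ is smooth and uniformly bounded above and below by strictly positive constants on $\overbar{K}$ (and on a neighbourhood of it). Hence any integrability, smoothness or positivity assumption on the weight in the Euclidean theorem is automatic here. Second, one has to translate the hyperbolic regularity assumption $K\in\cK_+^2(\HH^d)$ into its Euclidean counterpart for $\overbar{K}$: since $\pi$ is a real-analytic diffeomorphism between neighbourhoods of $K$ and $\overbar{K}$, the boundary $\bd\overbar{K}$ is a twice differentiable $(d-1)$-submanifold of $\RR^d$, and a direct computation in the projective model shows that the hyperbolic Gauss--Kronecker curvature of $\bd K$ at $x$ is strictly positive if and only if the Euclidean Gauss--Kronecker curvature of $\bd\overbar{K}$ at $\pi(x)$ is strictly positive.

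The main obstacle I expect is this last curvature translation, together with checking that the weighted Euclidean theorem in Section \ref{sec:CLTweightedEuclidean} really applies verbatim to the density $\phi$ and the domain $\overbar{K}$; in particular, the weighted floating body of $\overbar{K}$ with respect to $\phi$ must behave well enough to drive the Stein-type variance and fourth-moment estimates. Once these verifications are in place, the conclusion of the weighted Euclidean CLT gives
\begin{equation*}
    \frac{\Vol_h(K_h(n)) - \EE\Vol_h(K_h(n))}{\sqrt{\Var\Vol_h(K_h(n))}} \overset{d}{\longrightarrow} Z,
\end{equation*}
which is the desired statement. The proof of Theorem \ref{thm:Sphere} follows the same blueprint with central projection onto the tangent hyperplane at the centre of the supporting half-sphere and weight $(1+\|y\|^2)^{-(d+1)/2}$, so the two non-Euclidean central limit theorems are really two instances of one weighted Euclidean statement.
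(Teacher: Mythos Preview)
Your proposal is correct and follows essentially the same argument as the paper: the Beltrami--Klein projection you call $\pi$ is exactly the gnomonic projection $h$ used in the paper (up to an immaterial sign in the first $d$ coordinates), the pushforward density $(1-\|y\|^2)^{-(d+1)/2}$ matches, and the reduction to Theorem~\ref{thm:WeightedVolumeEuclidean} via $\psi=\varphi=\phi/\int_{\overbar{K}}\phi$ is precisely what the paper does. The curvature translation you flag as the main obstacle is handled in the paper by a reference to \cite{BesauWernerSpaceForms}, so no new computation is needed.
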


\begin{remark}
    Of course Theorem \ref{thm:Hyperbolic} holds true in all other equivalent models of $d$-dimensional hyperbolic space,
    such as, for example, the Poincar\'e (or conformal ball) model inside the $d$-dimensional unit ball $B_2^d$ or the upper half-space model.
    Figure \ref{fig:poincare_hilbert} shows an illustration of a hyperbolic random polytope in a hyperbolic disc in the Poincar\'e model for the hyperbolic plane $\HH^2$.
    Another model is the projective model for $\HH^d$ inside $B_2^d$, which is discussed also in Remark \ref{rem:projectivemodel} below.
\end{remark}

\subsection{Central limit theorems in Hilbert geometries}\label{subsec:CLThilbert}

Fix a compact convex subset $C\subset\RR^d$, $d\geq 2$, with non-empty interior, i.e., $\interior C\neq\emptyset$.
For two distinct points $x,y\in\interior C$, the line through $x$ and $y$ intersects the boundary of $C$ in
precisely two points $p=p(x,y)$ and $q=q(x,y)$ so that one has the points $p,x,y,q$ on the line in that order.
The Hilbert distance between $x$ and $y$ is defined via the cross-ratio of these four points by
\begin{equation*}
    d_C(x,y) := \frac{1}{2}\log\bigg(\frac{\|y-p\|}{\|x-p\|} \frac{\|x-q\|}{\|y-q\|}\bigg),
\end{equation*}
where $\|x\|=\sqrt{x\cdot x}$ stands for the Euclidean norm of $x\in\RR^d$.
The pair $(C,d_C)$ is known as a Hilbert geometry. Notice that if $C=B_2^d$, then $(C,d_C)$ is the classical projective model of the $d$-dimensional hyperbolic space.
Furthermore, since the cross-ratio is invariant with respect to projective transformations, we immediately see that any projective transformation $\Phi$ yields an isometry between $(C,d_C)$ and $(\Phi C,d_{\Phi C})$. Hence, if $C$ is an ellipsoid, then the Hilbert geometry determined by $C$ is isometric to the hyperbolic space $\HH^d$ considered in the previous section.
Hilbert geometries are important examples of Finsler manifolds, i.e., differentiable manifolds with a Finsler metric on the tangent bundle, which are generalizations of Riemannian manifolds. In particular, if $(C,d_C)$ carries a Riemannian structure, then $C$ has to be an ellipsoid, in other words, the only Riemmanian Hilbert geometry is the hyperbolic space, see e.g.\ \cite[Theorem 11.6]{Troyanov:2014}. We further refer to the handbook \cite{PapaTroyanov} for a representative overview on the topic of Hilbert geometries.

In what follows we shall assume that $C$ is strictly convex, since in this case affine hyperplanes are the only totally geodesic submanifolds of dimension $d-1$.
By $\cK_+^2(C)$ we denote the space of convex subsets $K\subset\interior C$ whose boundary $\bd K$
is a twice differentiable submanifold of $\RR^d$ with strictly positive Gauss--Kronecker curvature in each boundary point.
There are several reasonable choices for a volume measure in a Hilbert geometry.
We restrict our attention to two prominent examples.
The first is the Busemann volume $\Vol_C^{\Bu}$, which is the $d$-dimensional Hausdorff measure on the metric space $(C,d_C)$.
The other one is the Holmes--Thompson volume $\Vol_C^{\HT}$, a notion that is closely related to the symplectic structure on $\RR^{2d}$, see \cite{AlvarezThompson,PapaTroyanov,ThompsonBook} and also Remark \ref{rem:HilbertVolumes} at the end of the paper.

Our next result is a central limit theorem for random polytopes in Hilbert geometries.
\begin{theorem}\label{thm:HilbertGeometries}
    Fix a strictly convex compact set $C\subset\RR^d$, let $K\in\cK_+^2(C)$ and $\diamondsuit\in\{{\Bu},{\HT}\}$.
    Let $X_1,X_2,\dotsc$ be a sequence of independent random points that are distributed in $K$ according to $\Vol_C^\diamondsuit(\,\cdot\,|K)$.
    For each $n\in\NN$ let $K_C(n):=[X_1,\dotsc,X_n]$.
    Then
    \begin{equation*}
        \frac{\Vol_C^\diamondsuit(K_C(n))-\EE\Vol_C^\diamondsuit(K_C(n))}{\sqrt{\Var\Vol_C^\diamondsuit(K_C(n))}}\overset{d}{\longrightarrow}Z
    \end{equation*}
    as $n\to\infty$, where $Z$ is a standard Gaussian random variable.
\end{theorem}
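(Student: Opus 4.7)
The plan is to reduce Theorem~\ref{thm:HilbertGeometries} to the weighted Euclidean central limit theorem announced in Section~\ref{sec:CLTweightedEuclidean}, exactly as the abstract and introduction suggest. Concretely, I would write the Hilbert volume as a weighted Lebesgue measure on $\interior C$: there is a density $\phi_C^\diamondsuit\colon \interior C\to(0,\infty)$ such that
\begin{equation*}
    \Vol_C^\diamondsuit(B) \;=\; \int_{B}\phi_C^\diamondsuit(x)\,\dint x
\end{equation*}
for every Borel set $B\subset\interior C$. For the Busemann volume, $\phi_C^{\Bu}(x)$ is (a normalising constant times) the reciprocal of the Lebesgue volume of the Hilbert indicatrix $I_C(x)=\{v\in\RR^d: F_C(x,v)\le 1\}$ at $x$, where $F_C$ denotes the Finsler metric of the Hilbert geometry. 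For the Holmes--Thompson volume, $\phi_C^{\HT}(x)$ is (a constant times) the Lebesgue volume of the polar body $I_C(x)^\circ$. Once these densities are identified, the theorem follows by applying the weighted Euclidean CLT to the convex body $K\subset\interior C$ with weight $\phi_C^\diamondsuit$, since the random polytope $K_C(n)$ is literally the Euclidean convex hull $[X_1,\dotsc,X_n]$ (affine hyperplanes are totally geodesic because $C$ is strictly convex).

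The key step is therefore to verify that $\phi_C^\diamondsuit$ satisfies the hypotheses of the weighted Euclidean CLT on $K$. Since $K\in\cK_+^2(C)$ is a compact subset of $\interior C$, the Euclidean distance from $K$ to $\bd C$ is positive, so for every $x\in K$ the two boundary points $p(x,v),q(x,v)\in\bd C$ defining the cross-ratio depend continuously (indeed smoothly in the interior variable $x$) on $(x,v)$. This gives smoothness of $x\mapsto I_C(x)$ in the Hausdorff sense, and hence smoothness of the map $x\mapsto \Vol(I_C(x))$ and $x\mapsto \Vol(I_C(x)^\circ)$ by the standard regularity of the mixed volumes. It also gives two-sided bounds $0<c\le\phi_C^\diamondsuit\le C<\infty$ uniformly on $K$, because $I_C(x)$ is sandwiched between two Euclidean balls whose radii are controlled by the Euclidean distance from $x$ to $\bd C$. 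The assumptions on $\bd K$ (twice differentiable, strictly positive Gauss--Kronecker curvature) are directly the Euclidean ones, since $K\in\cK_+^2(C)$ is defined that way.

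Having checked continuity (respectively the regularity requested in Section~\ref{sec:CLTweightedEuclidean}) and positivity of $\phi_C^\diamondsuit$ on $K$, the weighted Euclidean CLT immediately yields
\begin{equation*}
    \frac{\Vol_C^\diamondsuit(K_C(n))-\EE\Vol_C^\diamondsuit(K_C(n))}{\sqrt{\Var\Vol_C^\diamondsuit(K_C(n))}}\overset{d}{\longrightarrow}Z,
\end{equation*}
because the sampling distribution $\Vol_C^\diamondsuit(\,\cdot\,|K)$ has Euclidean density proportional to $\phi_C^\diamondsuit\mathbf{1}_K$ and the target functional $\Vol_C^\diamondsuit(K_C(n))$ is the $\phi_C^\diamondsuit$-weighted Euclidean volume of the Euclidean convex hull $[X_1,\dotsc,X_n]$.

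The main obstacle I anticipate is not the application of the weighted CLT itself but the verification of regularity of $\phi_C^\diamondsuit$ without assuming any smoothness of $\bd C$. Strict convexity of $C$ is enough to make the cross-ratio well-defined, but it does not a priori give $C^1$ boundary. One has to exploit that $x$ ranges over a compact set strictly inside $C$: for such $x$, the map $v\mapsto (p(x,v),q(x,v))$ is continuous (in fact Lipschitz, uniformly in $x\in K$) by elementary convex geometry, and this is what propagates through the integrals defining $\Vol(I_C(x))$ and $\Vol(I_C(x)^\circ)$ to give the regularity of $\phi_C^\diamondsuit$ required by the weighted Euclidean CLT.
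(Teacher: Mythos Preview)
Your proposal is correct and follows essentially the same route as the paper: identify the Lebesgue densities of the Busemann and Holmes--Thompson volumes (the paper writes $\psi_{\Bu}(x)=\Vol(B_2^d)/\Vol(B_{C,x})$ and $\psi_{\HT}(x)=\Vol(B_{C,x}^\circ)/\Vol(B_2^d)$ for what you call the indicatrix $I_C(x)$), observe that these are continuous and strictly positive on $K\subset\interior C$ so that their normalized restrictions lie in $\cW(K)$, and then apply Theorem~\ref{thm:WeightedVolumeEuclidean} with $\varphi=\psi$ equal to this normalized density. Your extra care about the regularity of $x\mapsto I_C(x)$ when $\bd C$ is merely strictly convex is a reasonable elaboration, but the paper simply asserts continuity and moves on; in either case only continuity and two-sided boundedness on the compact set $K$ are needed, not smoothness.
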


\begin{remark}
    Theorem \ref{thm:HilbertGeometries} can be seen as an extension of Theorem \ref{thm:Hyperbolic}, since, as mentioned above, if $C$ is an ellipsoid then $(C,d_C)$ is isometric to $\HH^d$. Furthermore, in this case the hyperbolic group of motions acts invariant on $(C,d_C)$ and therefore any natural definition of volume on $(C,d_C)$ agrees, up to a positive constant, with the natural Lebesgue measure on $\HH^d$ that is invariant with respect to the hyperbolic group of motions.
\end{remark}

\begin{figure}[t]
    \centering
    \begin{tikzpicture}
        \begin{scope}
        \clip (-3.5,-3.5) rectangle (3.5,3.5);
        \node at (0,0) {\includegraphics[width=0.45\textwidth]{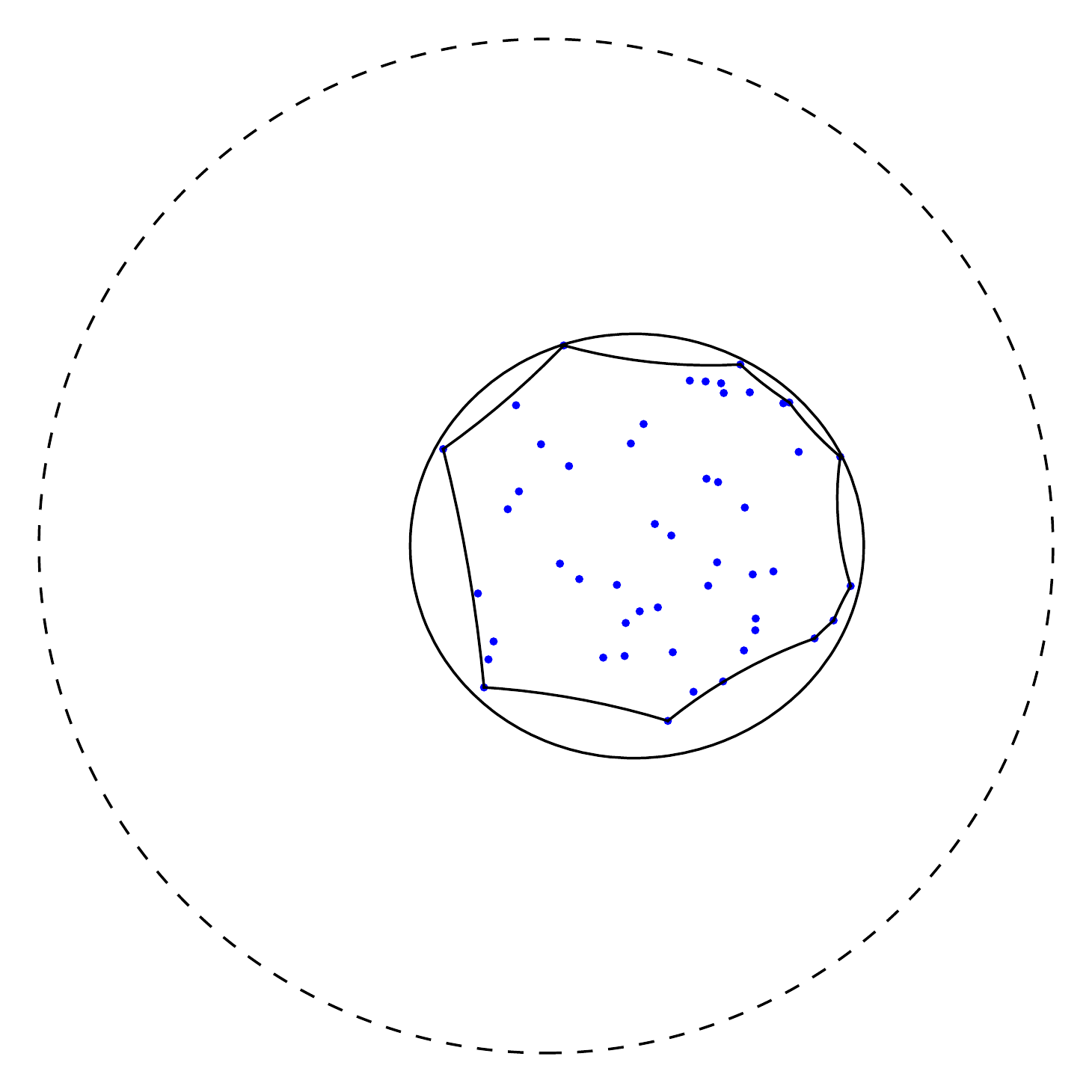}};
        \node at (-1.1,-.4) {\small $K$};
        \node at (0.4,1) {\small $K_h(n)$};
        \end{scope}

        \begin{scope}[xshift=.5\textwidth]
        \clip (-3.5,-3.5) rectangle (3.5,3.5);
        \node[rotate=40] at (0, 0) {\includegraphics[width=0.55\textwidth]{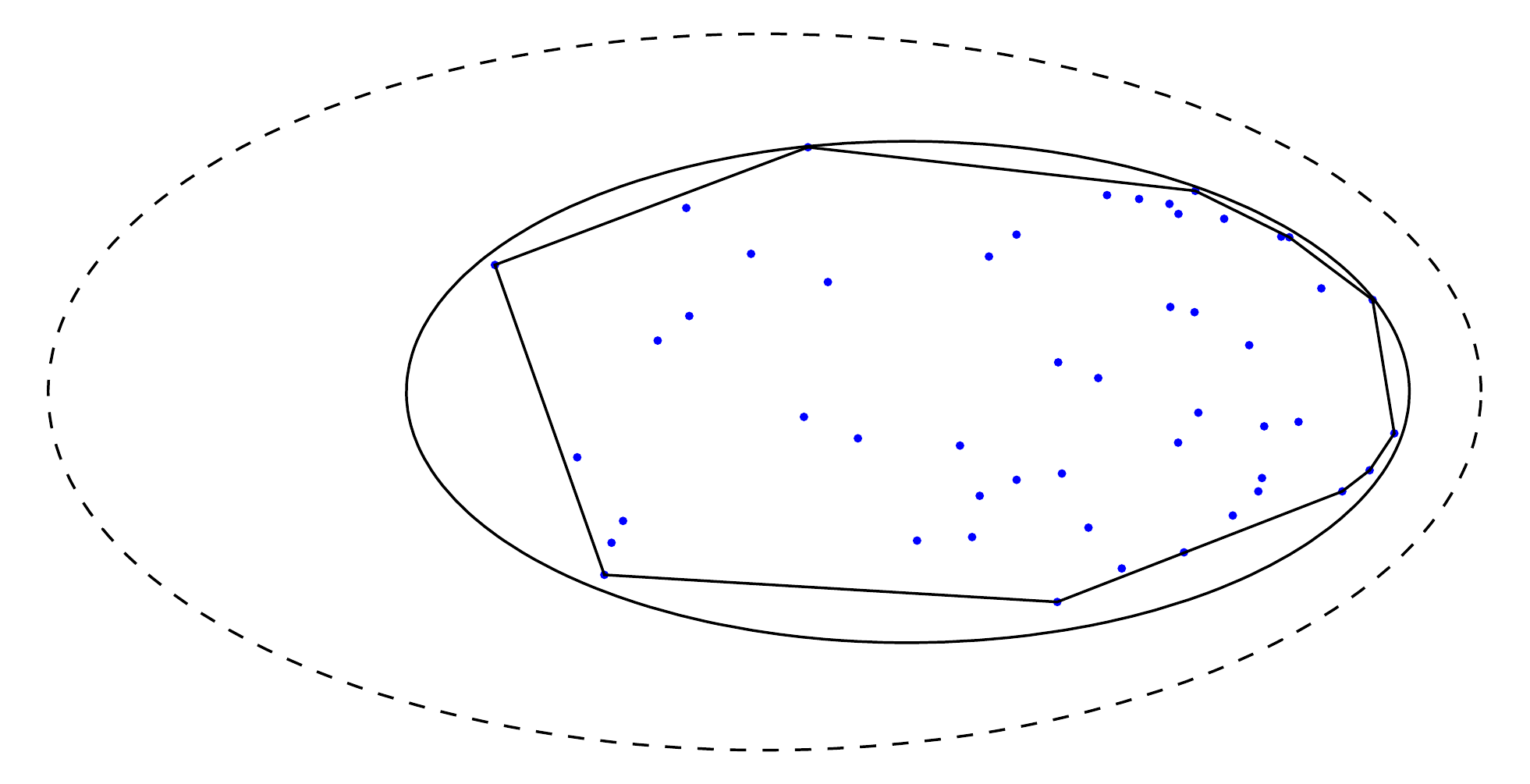}};
        \node at (-2, -1) {$K$};
        \node at (0.2, -0.5) {$K_C(n)$};
        \node at (-1.6,1.6) {$C$};
        \end{scope}
    \end{tikzpicture}
    \caption{Left: Illustration of the hyperbolic random polytope $K_h(n)$ in the Poincar\'e model for $\HH^2$.
    Right: Illustration of a random polytope $K_C(n)$ in a convex body $K$ in a Hilbert geometry with $C$ being an ellipse.}
    \label{fig:poincare_hilbert}
\end{figure}

\subsection{Central limit theorems in the dual Brunn--Minkowski theory}\label{subsec:CLTdualVolume}

We denote by $\cK_+^2(\RR^d)$ the space of convex bodies $K\subset\RR^d$ with twice differentiable boundary having strictly positive Gauss--Kronecker curvature $H_{d-1}(x)>0$ at every boundary point $x\in\bd K$. A central limit theorem for the intrinsic volumes $V_j$ of the random polytopes $K(n)=[X_1,\ldots,X_n]$ generated as the (Euclidean) convex hull of $n\geq d+1$ independent and uniformly distributed random points $X_1,\ldots,X_n$ in convex bodies $K\in\cK_+^2(\RR^d)$ has recently been established in \cite{ThaeleTurchiWespi}, as $n\to\infty$. We recall that the intrinsic volumes $V_j(K)$ may be defined by Kubota's formula as the mean volume of the $j$-dimensional orthogonal projections of $K$, that is,
\begin{equation*}
    V_j(K) = \bbinom{d}{j} \EE \Vol(\pi_E K),
\end{equation*}
where $E$ is a uniformly distributed random linear subspace in the Grassmannian $\Gr_j(\RR^d)$ of all $j$-dimensional linear subspaces of $\RR^d$ and $\pi_E$ denotes the orthogonal projection onto $E\in \Gr_j(\RR^d)$. The dimensional constant $\tbbinom{d}{j}$ is the ball-binomial, i.e.,
\begin{equation*}
	\bbinom{d}{j} := \binom{d}{j} \frac{\Vol(B_2^d)}{\Vol(B_2^j)\Vol(B_2^{d-j})} = \frac{1}{2} \frac{B(\frac{j}{2}, \frac{d-j}{2})}{B(j,d-j)},
\end{equation*}
where $B_2^d$ is the $d$-dimensional Euclidean unit ball and $B(x,y)$, $x,y>0$, is the Beta function.

\medskip
Lutwak \cite{Lutwak:1975,Lutwak:1979} introduced the notion of \emph{dual mixed volumes} in the 1970s, which marked the beginning of a rapidly developing theory that has grown to be a major research topic and is now often referred to as the \emph{dual Brunn--Minkowski theory}, see, for example, \cite{AHH:2018, Bernig:2014, BHP:2018, HLYZ:2016, LYZ:2018}. Here, by ``dual'' one does not in general refer to a strict duality, but rather an informal impression that has been drawn by many researchers who unveiled results for dual mixed volumes or notations derived from them that seem to mirror classical theorems from the Brunn--Minkowski theory for mixed volumes. Hence, the connection between the dual Brunn--Minkowski theory and the classical Brunn--Minkowski theory are often only by name.

For a convex body $K\subset \RR^d$ that contains the origin $o$ in its interior $\interior K$, the \emph{$j$th-dual volume} $\overtilde{V}_j(K)$ is defined as the mean $j$-dimensional intersection volume, that is,
\begin{equation*}
	\overtilde{V}_j(K) = \bbinom{d}{j} \EE \Vol(K\cap E),
\end{equation*}
where $E\in\Gr_j(\RR^d)$ is a $j$-dimensional linear subspace of $\RR^d$ distributed according to the rotation invariant Haar probability measure on $\Gr_j(\RR^d)$.
The dual volumes are normalized such that $\overtilde{V}_j(B_2^d) = V_j(B_2^d) = \binom{d}{j}\Vol(B_2^d) / \Vol(B_2^{d-j})$ and $V_n = \overtilde{V}_n = \Vol$.
Note that the dual volumes are also known as \emph{dual quermassintegrals} in the literature and they can be extended to bounded Borel sets, see \cite{Gardner:2007, GJV:2003}. We remark that the dual volumes have already appeared in stochastic geometry in connection with the expected f-vector of a class of Poisson polyhedra \cite{HHRT}.

A limit theorem for the expected intrinsic volumes of $K(n)$ has been established by B\'ar\'any \cite{Barany:1992} for convex bodies of class $C^3_+$ and was extended in \cite{BoroczkyHoffmanHug:2008,Reitzner:2004}. Combing the most complete version \cite[Theorem 1.1]{BoroczkyHoffmanHug:2008}
with the calculations for the unit ball by Affentranger \cite[Theorem 2]{Affentranger:1991} one finds, for $j\in\{1,\dotsc,d\}$, that
\begin{equation}\label{eqn:limit_V_j}
    \lim_{n\to\infty} \left(\frac{n}{\Vol(K)}\right)^{\frac{2}{d+1}} \Big( V_j(K) - \EE V_j(K(n))\Big)
    = c(d,j) \int_{\bd K} H_{d-1}(x)^{\frac{1}{d+1}}\, H_{d-j}(x)\, \cH^{d-1}(\dint x),
\end{equation}
where $K$ is a convex body that admits a rolling ball from the inside,
$H_{j}(x)$ is the normalized elementary symmetric function of the (generalized) principal curvatures of $\bd K$ at $x$ and
\begin{equation}
    c(d,j)
    = \frac{1}{2\Vol(B_2^{d-j})} \binom{d-1}{j-1}\ \frac{d+1}{d+3}
        \ \frac{1}{j!} \Gamma\left(j+\frac{d+3}{d+1}\right)
        \left(\frac{d+1}{\Vol(B_2^{d-1})}\right)^{\!\!\frac{2}{d+1}}.
\end{equation}

The following theorem is ``dual'' to \eqref{eqn:limit_V_j} and follows from the weighted limit theorem established in \cite[Theorem 3.1]{BoroczkyFodorHug2010}, see \eqref{eqn:BFH_Thm} below.

\begin{theorem}\label{thm:dualVolumeExp}
    Let $j\in \{1,\dotsc, d-1\}$ and let $K\in \cK_+^2(\RR^d)$ such that $o\in\interior K$. Let further $X_1, X_2,\dotsc$ be a sequence of independent random points that are distributed in $K$ according to $\Vol(\, \cdot \,|K)$. For each $n\in \NN$ define $K(n):=[X_1,\dotsc,X_n]$. Then
    \begin{equation*}
        \lim_{n\to \infty} \left(\frac{n}{\Vol(K)}\right)^{\frac{2}{d+1}} \Big(\overtilde{V}_j(K) - \EE \overtilde{V}_j(K(n))\Big)
        = \tilde{c}(d,j) \int_{\bd K} H_{d-1}(x)^{\frac{1}{d+1}}\, \|x\|^{j-d} \,\cH^{d-1}(\dint x),
    \end{equation*}
    where
    \begin{equation}\label{eqn:const}
        \tilde{c}(d,j)
        = \frac{1}{2 \Vol(B_2^{d-j})} \binom{d-1}{j-1} \ \frac{d+1}{d+3}
            \ \frac{1}{d!} \Gamma\left(d+ \frac{d+3}{d+1}\right)
            \left(\frac{d+1}{\Vol(B_2^{d-1})}\right)^{\!\!\frac{2}{d+1}}.
    \end{equation}
\end{theorem}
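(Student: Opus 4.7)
The plan is to reduce Theorem \ref{thm:dualVolumeExp} to the weighted volume limit theorem of Böröczky, Fodor and Hug, recorded as \eqref{eqn:BFH_Thm} below. The key observation is that, for any convex body $L$ with $o\in\interior L$, the $j$th dual volume admits the Euclidean integral representation
\begin{equation*}
    \overtilde{V}_j(L) = \frac{\binom{d-1}{j-1}}{\Vol(B_2^{d-j})}\int_L \|x\|^{j-d}\,\dint x,
\end{equation*}
so that $\overtilde{V}_j$ is literally a weighted Euclidean volume with weight function $f(x):=\|x\|^{j-d}$. To establish this identity I would start from the definition $\overtilde{V}_j(L)=\bbinom{d}{j}\,\EE\Vol(L\cap E)$ with $E\in\Gr_j(\RR^d)$ uniform, rewrite $\Vol(L\cap E)=\tfrac{1}{j}\int_{\SS^{d-1}\cap E}\rho_L(u)^j\,\dint u$ in polar coordinates on $E$, average over $E$ using the standard Grassmannian identity which expresses $\EE\int_{\SS^{d-1}\cap E}g(u)\,\dint u$ as $\tfrac{\Vol(\SS^{j-1})}{\Vol(\SS^{d-1})}\int_{\SS^{d-1}}g(u)\,\dint u$, and finally convert back to Cartesian coordinates on $L$. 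Simplifying with $\Vol(\SS^{k-1})=k\Vol(B_2^k)$ and $\binom{d-1}{j-1}=\tfrac{j}{d}\binom{d}{j}$ produces the claimed prefactor.

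Since $o\in\interior K$, the nested family $(K(n))_n$ contains $o$ in its interior for all $n$ large enough almost surely, and hence the representation applies to both $K$ and $K(n)$ on that event. A Fubini argument then gives
\begin{equation*}
    \overtilde{V}_j(K)-\EE\overtilde{V}_j(K(n)) = \frac{\binom{d-1}{j-1}}{\Vol(B_2^{d-j})}\int_K \|x\|^{j-d}\,\PP(x\notin K(n))\,\dint x,
\end{equation*}
up to a negligible exponentially small correction coming from the event $\{o\notin K(n)\}$. The weight $f$ is smooth, bounded, and bounded away from zero on a neighbourhood of $\bd K$ because $o\in\interior K$, so it fits the regularity hypotheses required by the Böröczky--Fodor--Hug theorem \eqref{eqn:BFH_Thm}. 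Feeding $f$ into that theorem immediately yields
\begin{equation*}
    \lim_{n\to\infty}\left(\frac{n}{\Vol(K)}\right)^{\!\tfrac{2}{d+1}}\int_K \|x\|^{j-d}\,\PP(x\notin K(n))\,\dint x = c^{\ast}_d \int_{\bd K} H_{d-1}(x)^{\tfrac{1}{d+1}}\|x\|^{j-d}\,\cH^{d-1}(\dint x),
\end{equation*}
where $c^{\ast}_d$ denotes the dimensional constant produced by \eqref{eqn:BFH_Thm}.

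Combining the last two displays proves the asymptotic with overall prefactor $\tfrac{\binom{d-1}{j-1}}{\Vol(B_2^{d-j})}\,c^{\ast}_d$. The main obstacle is not conceptual but purely computational: one must verify that this product coincides with the expression \eqref{eqn:const} for $\tilde{c}(d,j)$, which is a matter of matching the gamma functions and the powers of $\Vol(B_2^{d-1})$ carried by $c^{\ast}_d$ with the numerical factors on the right-hand side of \eqref{eqn:const}.
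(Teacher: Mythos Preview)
Your proposal is correct and follows essentially the same route as the paper: express $\overtilde{V}_j$ as the weighted Euclidean volume with weight $\|x\|^{j-d}$ (the paper cites this as Lutwak's dual Kubota formula together with polar coordinates), observe that the normalized weight lies in $\cW(K)$, and apply the B\"or\"oczky--Fodor--Hug limit \eqref{eqn:BFH_Thm}; the constant $\tilde c(d,j)$ then drops out as $c(d,d)\cdot\binom{d-1}{j-1}/\Vol(B_2^{d-j})$. One small simplification: the identity $\overtilde{V}_j(L)=\tfrac{\binom{d-1}{j-1}}{\Vol(B_2^{d-j})}\int_L\|x\|^{j-d}\,\dint x$ actually holds for every bounded Borel set $L$ (your own polar-coordinate derivation never uses $o\in L$), so your separate treatment of the event $\{o\notin K(n)\}$ is unnecessary.
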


Finally, we also establish a central limit theorem for the dual volumes of the random polytopes $K(n)$.

\begin{theorem}\label{thm:dualVolume}
    Let $j\in\{1,\dotsc,d\}$, $K\in \cK_+^2(\RR^d)$ and assume that  $o\in\interior K$.
    Let further $X_1,X_2,\dotsc$ be a sequence of independent random points that are distributed in $K$ according to $\Vol(\,\cdot\,|K)$.
    For each $n\in \NN$ define $K(n) := [X_1,\dotsc,X_n]$.
    Then
    \begin{equation*}
        \frac{\overtilde{V}_j(K(n))-\EE\overtilde{V}_j(K(n))}{\sqrt{\Var\overtilde{V}_j(K(n))}}\overset{d}{\longrightarrow}Z
    \end{equation*}
    as $n\to \infty$, where $Z$ is a standard Gaussian random variable.
\end{theorem}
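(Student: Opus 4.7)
The plan is to rewrite the dual volume $\overtilde{V}_j(K(n))$ as a weighted Euclidean volume of $K(n)$ and then invoke the central limit theorem for weighted random polytopes in $\RR^d$ announced in Section \ref{sec:CLTweightedEuclidean}. The starting point is the classical polar-coordinate representation
\begin{equation*}
\overtilde{V}_j(L) = \alpha_{d,j}\int_{\SS^{d-1}}\rho_L(u)^j\,\cH^{d-1}(\dint u),
\end{equation*}
valid for any convex body $L$ with $o\in\interior L$, where $\rho_L$ denotes the radial function of $L$ and $\alpha_{d,j}\in(0,\infty)$ is a standard normalizing constant. Passing to Cartesian coordinates via $x=ru$ yields $\int_L\|x\|^{j-d}\,\dint x = \frac{1}{j}\int_{\SS^{d-1}}\rho_L(u)^j\,\cH^{d-1}(\dint u)$, and therefore
\begin{equation*}
\overtilde{V}_j(L) = \beta_{d,j}\int_L \|x\|^{j-d}\,\dint x
\end{equation*}
with $\beta_{d,j}:=j\alpha_{d,j}$. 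Applied to $L=K(n)$ this identifies $\overtilde{V}_j(K(n))$, up to the positive deterministic factor $\beta_{d,j}$, with the weighted Euclidean volume of $K(n)$ for the weight $w(x):=\|x\|^{j-d}$. The case $j=d$ is trivial since then $w\equiv 1$ and Theorem \ref{thm:dualVolume} reduces to Reitzner's classical central limit theorem.

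Next, I would verify the hypotheses of the weighted CLT from Section \ref{sec:CLTweightedEuclidean} for this particular $w$. The function $w$ is strictly positive and $C^\infty$ on $\RR^d\setminus\{o\}$, and because $o\in\interior K$ the origin is separated from $\bd K$ by a positive Euclidean distance. Hence $w$ is bounded, bounded away from zero, and as smooth as one could wish on an open neighborhood of $\bd K$ inside $K$; combined with the standing assumption $K\in\cK_+^2(\RR^d)$ (which supplies positive Gauss--Kronecker curvature on $\bd K$), this places us squarely in the setting of the weighted theorem on a full neighborhood of the boundary. The singularity of $w$ at the interior point $o$ is harmless for the asymptotics: the origin eventually lies deep inside $K(n)$ almost surely, and the random fluctuations of $K(n)$ concentrate in arbitrarily thin strips around $\bd K$, so the contribution of any fixed small neighborhood of $o$ to $\int_{K(n)} w\,\dint x$ is deterministic for all sufficiently large $n$ and drops out of both the variance and the standardization.

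Applying the weighted CLT to the functional $L\mapsto \int_L w\,\dint x$ then yields asymptotic normality of the standardized weighted volume of $K(n)$, and multiplying numerator and denominator by the positive constant $\beta_{d,j}$ converts this limit theorem into exactly the statement of Theorem \ref{thm:dualVolume}. The main obstacle, and indeed the only non-routine step, is to justify that the weighted CLT of Section \ref{sec:CLTweightedEuclidean} accommodates a weight with one integrable but unbounded interior singularity; once this is checked, the matching variance bounds of order $n^{-(d+3)/(d+1)}$ delivered by the weighted theorem transfer to $\Var\overtilde{V}_j(K(n))$, so that the denominator in the statement is strictly positive for all large $n$ and convergence in distribution to a standard Gaussian $Z$ follows.
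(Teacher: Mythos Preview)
Your proposal is correct and follows exactly the paper's approach: express $\overtilde{V}_j(K(n))$ as $\Psi_j(K(n))$ for the weight $\psi_j(x)\propto\|x\|^{j-d}$ and apply Theorem \ref{thm:WeightedVolumeEuclidean} with $\varphi=1/\Vol(K)$. Your ``main obstacle'' is in fact no obstacle at all: the class $\cW(K)$ in Section \ref{sec:CLTweightedEuclidean} was designed precisely to allow an integrable interior singularity, since condition (c) only asks for continuity and boundedness of $\psi$ on $K\setminus L$ for some convex body $L\subset\interior K$, and taking $L$ a small ball around $o$ shows $\psi_j\in\cW(K)$ directly.
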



\section{Central limit theorem for weighted Euclidean spaces}\label{sec:CLTweightedEuclidean}

Fix a space dimension $d\geq 2$ and recall that a convex body $K\subset\RR^d$ is a compact, convex subset with non-empty interior, i.e., $\interior K \neq \emptyset$. By $\cK(\RR^d)$ we denote the space of convex bodies in $\RR^d$. In addition, we let $\cK_+^2(\RR^d)$ be the space of convex bodies $K$ in $\RR^d$ such that the boundary $\bd K$ is a twice-differentiable $(d-1)$-submanifold of $\RR^d$ with strictly positive Gaussian curvature $H_{d-1}(x)$ at every boundary point $x\in\bd K$. For $K\in\cK(\RR^d)$ we let $\cW(K)$ be the class of weight functions on $K$, by which we mean the set of measurable functions $\varphi:K\to (0, \infty]$ with the following properties:
\begin{enumerate}
    \item[a)] (probability density): $\int_K\varphi(x)\,\dint x = 1$,
    \item[b)] (absolute lower bound): there is a constant $c_{\varphi}>0$ such that $c_{\varphi} \leq \varphi(x)$ for all $x\in K$.
    \item[c)] (continuous and bounded around the boundary): there exists an convex body $L$ in the interior of $K$ such that $\varphi$ is continuous on $K\setminus L$ and
        $\varphi(x) \leq C_{\varphi}$ for all $x\in K\setminus L$ for some positive constant $C_\varphi>0$.
\end{enumerate}
Clearly, if $\varphi:K\to (0, \infty)$ is continuous on $K$ and $\int_K \varphi(x)\,\dint x =1$, then $\varphi \in \cW(K)$.
Each $\varphi\in\cW(K)$ can be regarded as the density function of a probability measure $\Phi$ on $K$, that is, $\Phi(B)=\int_B\varphi(x)\,\dint x$ for all measurable subsets $B\subset K$.
As in the previous sections, we denote by $[B]$ the (Euclidean) convex hull of a set $B\subset\mathbb{R}^d$.

For $K\in\cK(\RR^d)$, $\varphi\in\cW(K)$ and $n\geq d+1$ let $X_1,\ldots,X_n$ be independent random points with distribution $\Phi$. The convex hull
\begin{equation}\label{eq:DefRandomPolytopes}
    K_\varphi(n):=[X_1,\ldots,X_n]
\end{equation}
of these points is a weighted random polytope contained in $K$. If $\varphi = \Vol(K)^{-1}$ then $\Phi$ is the uniform distribution on $K$ and $K_\varphi(n)$ reduces to the uniform model for random polytopes, which was intensively studied in the literature and which we also discussed in the Introduction. A quantity of particular interest is the volume $\Vol(K_\varphi(n))$ of $K_\varphi(n)$ and its asymptotic behaviour, as $n\to\infty$. More generally, for another weight function $\psi\in\cW(K)$ we investigate the weighted volume $\Psi(K_\varphi(n))=\int_{K_\varphi(n)}\psi(x)\,\dint x$ of $K_\varphi(n)$. The asymptotic behaviour of the expectation $\EE\Psi(K_\varphi(n))$ was studied in \cite{BoroczkyFodorHug2010}. In particular, \cite[Theorem 3.1]{BoroczkyFodorHug2010} shows that
\begin{equation}\label{eqn:BFH_Thm}
	\lim_{n\to\infty}n^\frac{2}{d+1}(1-\EE\Psi(K_\varphi(n))) = c(d,d)\int_{\bd K}\varphi(x)^{-\frac{2}{d+1}}\,H_{d-1}(x)^\frac{1}{d+1}\,\psi(x)\,\cH^{d-1}(\dint x)
\end{equation}
is valid for $K\in\cK_+^2(\RR^d)$ and $\varphi,\psi\in\cW(K)$, where $c(d,d)$ is the constant \eqref{eqn:const}.

\medskip

In this paper we shall prove that the suitably centred and normalized weighted volumes $\Psi(K_\varphi(n))$ satisfy a central limit theorem. In fact, the next theorem can be regarded as our main contribution and Theorems \ref{thm:Sphere}, \ref{thm:Hyperbolic}, \ref{thm:HilbertGeometries} and \ref{thm:dualVolume} presented in the previous sections will all follow from this result. At the same time it generalizes Reitzner's central limit theorem from \cite{ReitznerCLT2005} for the ordinary volume to weighted volumes and also to arbitrary underlying densities. We emphasize that a major obstacle in the proof of such a result is a lower variance bound, which will separately be provided in Theorem \ref{thm:LowerVarianceBound} below.

\begin{theorem}\label{thm:WeightedVolumeEuclidean}
    Let $K\in\cK_+^2(\RR^d)$ and $\varphi,\psi\in\cW(K)$ and let $X_1,X_2,\ldots$ be independent random points with distribution
    $\Phi$ and define $K_\varphi(n):=[X_1,\ldots,X_n]$ for each $n\in\NN$. Then
    \begin{equation*}
        \frac{\Psi(K_\varphi(n))-\EE\Psi(K_\varphi(n))}{\sqrt{\Var\Psi(K_\varphi(n))}} \overset{d}{\longrightarrow} Z
    \end{equation*}
    as $n\to\infty$, where $Z$ is a standard Gaussian random variable.
\end{theorem}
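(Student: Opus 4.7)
The plan is to combine the Stein-method normal approximation bound for functionals of binomial point processes of Chatterjee \cite{Chatterjee} and Lachi\'eze-Rey--Peccati \cite{LachPecc}, as summarised in Section \ref{Background}, with geometric estimates coming from the theory of weighted floating bodies of Werner \cite{Werner2002} and of Besau, Ludwig and Werner \cite{BesauLudwigWerner}. Write $F_n := \Psi(K_\varphi(n))$ and, for an independent copy $X_i'$ of $X_i$, let $F_n^{(i)}$ denote the value of $F_n$ after replacing $X_i$ by $X_i'$, with associated first- and second-order difference operators
\[
    D_i F_n := F_n - F_n^{(i)}, \qquad D_{i,j}^2 F_n := D_i F_n - D_i F_n^{(j)}.
\]
The abstract Stein bound then reduces the theorem to two tasks: (i) $L^p$-upper estimates on $D_i F_n$ and $D_{i,j}^2 F_n$, and (ii) a matching polynomial lower bound on $\Var F_n$.

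For task (i), $|D_i F_n|$ is dominated by the $\psi$-mass of the symmetric difference of $K_\varphi(n)$ and $K_\varphi(n)^{(i)}$, which is contained in the union of two $K$-caps cut off by $X_i$ and by $X_i'$. Standard economic cap covering arguments, here in their weighted form based on the $\varphi$-weighted floating body of \cite{BesauLudwigWerner}, yield bounds $\EE|D_i F_n|^p \leq C_p\, n^{-\beta(d,p)}$ of the expected order, together with analogous estimates on $D_{i,j}^2 F_n$ that exploit the rare event that two given sample points are simultaneously extreme in $K_\varphi(n)$. The hypotheses (a)--(c) on $\cW(K)$ are tailored to this step: (c) confines the delicate behaviour of $\varphi$ to the boundary layer where the cap integrals live, while (b) permits $\Phi$ to be compared with Lebesgue measure throughout $K$.

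Task (ii), the lower variance bound, will be stated separately as Theorem \ref{thm:LowerVarianceBound} and constitutes the main obstacle in the proof. Following the strategy of Reitzner \cite{ReitznerCLT2005} and its refinements in \cite{Thaele18,ThaeleTurchiWespi}, one constructs a family of order $n$ pairwise disjoint ``test caps'' $C_1,\dotsc,C_N$ near $\bd K$ whose widths are calibrated via the $\varphi$-weighted floating body, each of mass $\Phi(C_\ell)$ of order $1/n$, and such that on the event that exactly one sample point falls in $C_\ell$ that point cuts off an essentially identifiable $\psi$-weighted cap of volume of the correct order $n^{-(d+3)/(d+1)}$. The near-independence of these cap contributions, via a Hoeffding-type decomposition of the variance, then yields $\Var F_n \geq c\, n^{-(d+3)/(d+1)}$, matching the order suggested by the mean asymptotics \eqref{eqn:BFH_Thm}.

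Plugging the bounds from (i), normalised by the variance lower bound from (ii), into the Lachi\'eze-Rey--Peccati Wasserstein inequality produces a rate of normal approximation that is a negative power of $n$, from which the asserted convergence in distribution follows at once. Beyond establishing (ii) in the weighted generality, the additional care needed lies in verifying that all cap estimates depend on $\varphi$ only through its behaviour in a neighbourhood of $\bd K$: points of $\Phi$ lying deep in the interior of $K$ have a negligible probability of being extreme and therefore contribute negligibly to both $D_i F_n$ and to $\Var F_n$ at the scales relevant to the central limit theorem.
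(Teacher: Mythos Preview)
Your proposal follows essentially the same route as the paper: the Lachi\`eze-Rey--Peccati/Chatterjee normal approximation bound (Lemma \ref{lem:CLTLachiezeReyPeccati}), fed by $L^p$-estimates on first- and second-order difference operators obtained via the weighted floating body and Vu's containment lemma, together with a Reitzner-style lower variance bound (Theorem \ref{thm:LowerVarianceBound}). One quantitative slip worth correcting: the number of disjoint test caps in the variance construction is of order $n^{(d-1)/(d+1)}$, not of order $n$; each cap contributes conditional variance of order $n^{-2}$ (not $n^{-(d+3)/(d+1)}$), and it is the product $n^{(d-1)/(d+1)}\cdot n^{-2}$ that gives the claimed lower bound $n^{-(d+3)/(d+1)}$. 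Also, the good event in each cap is not merely ``exactly one sample point falls in $C_\ell$'' but a more structured simplex configuration (one point near the apex, one in each of $d$ base regions), which is what makes the conditional variance tractable and of the right order.
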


Our proof of Theorem \ref{thm:WeightedVolumeEuclidean} is based on two principal ingredients, one of geometric and the other of probabilistic nature.
The first is the concept of weighted floating bodies, which was introduced in \cite{Werner2002} and very recently further studied in \cite{BesauLudwigWerner},
and the fact from \cite{VuConcentration} that a suitable weighted floating body is contained in $K_\varphi(n)$ with high probability.
The other ingredient is a version of Stein's method, which is a powerful probabilistic device to prove central limit theorems.
Here, we use a version for functionals of binomial point processes from \cite{LachPecc}, which extends the earlier ideas developed in \cite{Chatterjee}.
A similar approach was also used in the recent papers \cite{Thaele18,ThaeleTurchiWespi,TurchiWespi}, where asymptotic normality for
intrinsic volumes of non-weighted random polytopes in Euclidean spaces was studied.
In the present paper we develop this technique further and combine it with geometric properties of \textit{weighted} floating bodies to
make it work for \textit{weighted} volumes of \textit{weighted} random polytopes as well.

\begin{remark}
    In addition to what has been presented so far we will actually prove the following quantitative version of Theorem \ref{thm:WeightedVolumeEuclidean}.
    The Wasserstein distance (see \eqref{eq:DefWasserstein} below) between the law of
        $\Big(\Psi(K_\varphi(n))-\EE\Psi(K_\varphi(n))\Big)/\sqrt{\Var\Psi(K_\varphi(n))}$
    and that of the standard Gaussian random variable $Z$ is bounded by a constant multiple of
        $(\ln n)^{3+\frac{2}{d+1}}n^{-\frac{1}{2}+\frac{1}{d+1}}$,
    which tends to zero for all $d\geq 2$, as $n\to\infty$.
    In a similar spirit, Theorems \ref{thm:Sphere}, \ref{thm:Hyperbolic}, \ref{thm:HilbertGeometries} and \ref{thm:dualVolume}
    can be upgraded to quantitative central limit theorems with the same bound on the Wasserstein distance.
\end{remark}

\section{Background material}\label{Background}

\subsection{Weighted floating bodies and geometric lemmas}

Let $K\in\cK(\RR^d)$, $\varphi\in\cW(K)$ and $\delta>0$.
The weighted floating body $K_\delta^\varphi$ of $K$ with respect to $\varphi$ and $\delta$ is defined as the set
\begin{equation*}
    K_\delta^\varphi := \bigcap \{ H^- : H\subset\RR^d\text{ a hyperplane, } \Phi(K\cap H^+)\leq\delta \},
\end{equation*}
where $H^\pm$ are the two closed half-spaces determined by a hyperplane $H\subset\RR^d$.
This concept was introduced in \cite{Werner2002} and further studied in \cite{BesauLudwigWerner}.
It generalizes the classical notion of convex floating bodies, which arises by taking $\varphi=\Vol(K)^{-1}$.
In this case we shall use the notation $K_\delta$ for the classical convex floating body of $K$ for parameter $\delta$.

The weighted floating body $K_\delta^\varphi$ is a convex body with non-empty interior for $\delta\in (0, \alpha(K,\varphi))$, where
\begin{equation*}
    \alpha(K,\varphi) = \max_{x\in K} f_K^\varphi(x),
\end{equation*}
and $f_K^\varphi:K\to (0,1)$ is the minimal cap measure, i.e.,
\begin{equation*}
    f_K^\varphi(x) = \min \{ \Phi(K\cap H^+) : x\in H^+\}.
\end{equation*}
In fact, we may equivalently define the weighted floating body $K_\delta^\varphi$ via the superlevel sets of $f_K^\varphi$, namely
\begin{equation*}
    K_\delta^\varphi = \{x\in K : f_K^\varphi(x) \geq \delta\}.
\end{equation*}
We immediately find that $K_{\delta_1}^\varphi \subset K_{\delta_2}^\varphi$ whenever $\delta_1\geq \delta_2$ and $K_\delta^\varphi \to K$ in the Hausdorff distance between convex bodies, as $\delta\to 0^+$.

Since $\varphi\in\cW(K)$ there is $\delta_0\in (0,\alpha(K,\varphi))$ such that $L\subset K_{\delta_0}^\varphi$,
where $L\subset \interior K$ is a convex body such that $\varphi$ is continuous and bounded on $K\setminus L$.
The constant $\delta_0$ is determined by
\begin{equation*}
    \delta_0 \leq \min_{u\in \SS^{d-1}} \Phi(K\cap H^+(u,h_L(u))),
\end{equation*}
i.e., for all $x\in L$ we have $f_K^\varphi(x) \geq \delta_0$ and therefore $L\subset K_{\delta_0}^\varphi$.
Thus if $\varphi\in\cW(K)$, then there exists $\delta_0\in (0,\alpha(K,\varphi))$ and $c,C>0$ such that for all $\delta \in (0,\delta_0)$, $\varphi$ is continuous and $c\leq \varphi(x) \leq C$ for all $x$ in $K\setminus K_\delta^\varphi$.

The next property of weighted floating bodies will turn out to be crucial for us. For a proof we refer to \cite[Lemma 5.2]{BesauLudwigWerner}.
It allows to compare a weighted floating body with suitable unweighted floating bodies.
\begin{lemma}\label{lem:ComparisonWeightedVSunweighted}
    Let $K\in\cK(\RR^d)$ and $\varphi\in\cW(K)$.
    Then there exist constants $c,\delta_0\in (0,1)$ independent from $\delta$ such that for all $\delta \in (0,\delta_0)$ one has that
    \begin{equation*}
        K_{\delta/c}
        \subset K_\delta^\varphi
        \subset K_{c\delta}\,.
    \end{equation*}
\end{lemma}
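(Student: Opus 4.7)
My plan is to compare the minimum-cap functions $f_K^\varphi$ and $f_K^{1/V}$, where $V := \Vol(K)$, by exploiting the two pointwise bounds on $\varphi$ that come with the definition of $\cW(K)$. The unconditional bound $\varphi \geq c_\varphi$ gives $\Phi(K \cap H^+) \geq c_\varphi \Vol(K \cap H^+)$ for every halfspace $H^+$, while $\varphi \leq C_\varphi$ on $K \setminus L$ gives $\Phi(K \cap H^+) \leq C_\varphi \Vol(K \cap H^+)$ provided the cap $K \cap H^+$ is contained in $K \setminus L$. I would choose $c := \min\{c_\varphi V,\, 1/(C_\varphi V),\, 1/2\}$, which lies in $(0,1)$ since $c_\varphi V \leq \int_K \varphi\,\dint x = 1$; these are precisely the constants needed to translate the two pointwise cap inequalities between the normalizations of $\Phi$ and $\Vol(\,\cdot\,)/V$.

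The geometric core of the proof is the claim that there exists $\mu_0 > 0$ such that every halfspace $H^+$ with $\Vol(K \cap H^+) \leq \mu_0 V$ satisfies $K \cap H^+ \subset K \setminus L$. I would prove this by compactness. If not, there are halfspaces $H_n^+ = \{y : y \cdot u_n \geq \tau_n\}$ with $\Vol(K \cap H_n^+) \to 0$ and points $x_n \in (K \cap H_n^+) \cap L$. Since $u_n \in \SS^{d-1}$, $\tau_n$ is bounded ($H_n$ meeting $K$), and $L$ is compact, we may pass to a subsequence with $u_n \to u$, $\tau_n \to \tau$, $x_n \to x \in L$. The limit halfspace $H^+ = \{y : y \cdot u \geq \tau\}$ satisfies $\Vol(K \cap H^+) = 0$ by dominated convergence, so the convex set $K \cap H^+$ is contained in the hyperplane $H$; thus $K \subset H^-$ and $H$ supports $K$. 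On the other hand $x \in K \cap H^+ \subset H$ and simultaneously $x \in L \subset \interior K$, contradicting $H \cap \interior K = \emptyset$.

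With the geometric claim in hand, I set $\delta_0 := \min\{\mu_0/c,\, \alpha(K,\varphi)\}$. For $K_{\delta/c} \subset K_\delta^\varphi$ and $\delta \in (0,\delta_0)$, take $x \in K_{\delta/c}$: for every halfspace $H^+ \ni x$ one has $\Vol(K \cap H^+) \geq (\delta/c) V$, so the unconditional lower bound gives $\Phi(K \cap H^+) \geq c_\varphi V (\delta/c) \geq \delta$, whence $f_K^\varphi(x) \geq \delta$. For the reverse inclusion $K_\delta^\varphi \subset K_{c\delta}$ I argue by contrapositive: if $x \notin K_{c\delta}$, a minimizer $H^*$ of $\Vol(K \cap H^+)$ among halfspaces containing $x$ satisfies $\Vol(K \cap (H^*)^+) < c\delta V \leq \mu_0 V$, so by the geometric claim $K \cap (H^*)^+ \subset K \setminus L$; the upper cap bound then yields $\Phi(K \cap (H^*)^+) \leq C_\varphi V \cdot c\delta < \delta$ (strict, inherited from the strict volume inequality), whence $f_K^\varphi(x) < \delta$. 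The main obstacle is precisely the geometric claim: a priori a cap of small volume could be thin and laterally extended, threading close to $L$, and one might fear no uniform threshold $\mu_0$ exists. The compactness reduction bypasses any quantitative control on cap shape by producing in the limit a supporting hyperplane of $K$, which cannot meet $\interior K$.
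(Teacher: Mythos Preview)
Your proof is correct. The paper does not supply its own argument for this lemma but simply cites \cite[Lemma~5.2]{BesauLudwigWerner}; your write-up provides a complete, self-contained proof along the natural lines.

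One small remark on the ``geometric claim'': the discussion immediately preceding the lemma in the paper already hints at an alternative, more explicit route. Instead of a sequential compactness argument, one may take
\[
\mu_0 := \frac{1}{V}\,\min_{u\in\SS^{d-1}} \Vol\big(K\cap H^+(u,h_L(u))\big),
\]
where $h_L$ is the support function of $L$. This minimum is positive because $L\subset\interior K$ forces each hyperplane $H(u,h_L(u))$ to pass through $\interior K$, and the map $u\mapsto \Vol(K\cap H^+(u,h_L(u)))$ is continuous on the compact sphere. Any halfspace $H^+=\{y:y\cdot u\geq\tau\}$ meeting $L$ satisfies $\tau\leq h_L(u)$ and hence contains $H^+(u,h_L(u))$, giving $\Vol(K\cap H^+)\geq \mu_0 V$. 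This yields the same conclusion with an explicit threshold, whereas your limiting argument is a touch more conceptual and avoids the support-function formalism; either is perfectly adequate here.
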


We also need frequently the behaviour of the volume of $K\setminus K_\delta$, as $\delta\to\infty$. The following fact can be found in \cite{BaranySurvey}, for example.
\begin{lemma}\label{lem:VolumeWetPart}
    Let $K\in\cK_+^2(\RR^d)$. Then there exist constants $c,\delta_0\in(0,1)$ such that for all $\delta\in (0,\delta_0)$,
    \begin{equation*}
        c \delta^{\frac{2}{d+1}}\leq \Vol(K\setminus K_\delta) \leq \tfrac{1}{c} \delta^{\frac{2}{d+1}}.
    \end{equation*}
\end{lemma}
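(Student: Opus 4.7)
The plan is to deduce both bounds from the standard cap-covering machinery specialised to bodies in $\cK_+^2(\RR^d)$. For $x_0\in\bd K$ with outer unit normal $u_0$, introduce the cap
\begin{equation*}
    C(x_0,t) := K \cap \{y\in\RR^d : \langle u_0,\, x_0-y\rangle \leq t\},\qquad t>0.
\end{equation*}
Writing $\bd K$ locally as the graph of a convex $C^2$ function whose Hessian at the base point encodes the shape operator of $K$ at $x_0$, a Taylor expansion yields, uniformly in $x_0\in\bd K$,
\begin{equation*}
    \Vol(C(x_0,t)) = \kappa_d\, H_{d-1}(x_0)^{-1/2}\, t^{(d+1)/2}(1+o(1)),\qquad t\to 0^+,
\end{equation*}
for an explicit constant $\kappa_d>0$. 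The $\cK_+^2$ hypothesis is precisely what makes this estimate uniform: $H_{d-1}$ is continuous and strictly positive on the compact set $\bd K$, so $H_{d-1}^{-1/2}$ is uniformly bounded above and below. Inverting, a cap based at $x_0$ of volume $\delta\Vol(K)$ has height $t\asymp \delta^{2/(d+1)}$, uniformly in $x_0$.

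Given this local estimate, the upper bound follows from the economic cap-covering theorem of B\'ar\'any--Larman (cf.\ \cite{BaranySurvey}): since a point $x\in K\setminus K_\delta$ lies in some cap $C$ with $\Vol(C)\leq \delta\Vol(K)$, the wet part admits a cover by Macbeath-type regions, each of volume $O(\delta)$, whose disjoint cores have cardinality $O(\delta^{-(d-1)/(d+1)})$, summing to $O(\delta^{2/(d+1)})$. The matching lower bound proceeds by constructing many disjoint caps inside $K\setminus K_\delta$: choose a maximal $\delta^{1/(d+1)}$-separated family on $\bd K$, which by compactness has cardinality of order $\delta^{-(d-1)/(d+1)}$; the associated caps of volume $\asymp\delta$ are pairwise disjoint and contained in $K\setminus K_\delta$, so summing their volumes yields $\Vol(K\setminus K_\delta)\gtrsim \delta^{2/(d+1)}$.

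The main obstacle is the uniformity of the local cap-volume asymptotic across $\bd K$, but this is exactly what the $\cK_+^2$ hypothesis delivers. Once the uniform estimate is in place, the remaining arguments are purely combinatorial and proceed via the cap-covering lemmas collected in \cite{BaranySurvey}. In fact, our two-sided bound follows from strictly less than the full Sch\"utt--Werner asymptotic, though the latter identifies $\lim_{\delta\to 0^+}\delta^{-2/(d+1)}\Vol(K\setminus K_\delta)$ as a positive multiple of the affine surface area $\int_{\bd K} H_{d-1}^{1/(d+1)}\,\dint\cH^{d-1}$, which by the same $\cK_+^2$ assumption is strictly positive and finite.
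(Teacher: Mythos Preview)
The paper does not prove this lemma at all: it is stated as a fact ``which can be found in \cite{BaranySurvey}, for example'' and no argument is given. Your proposal supplies exactly the standard proof that underlies the cited result --- uniform cap-volume asymptotics on $\bd K$ (the paper records this as \eqref{eqn:cap_limit}, due to Leichtweiss), then the economic cap-covering for the upper bound and a packing of disjoint caps for the lower bound --- so there is nothing to compare; your sketch is correct and is precisely what the citation points to.

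One small remark on presentation: in your upper-bound step you invoke the cardinality estimate $m=O(\delta^{-(d-1)/(d+1)})$ for the cover without saying where it comes from. For a general convex body the economic cap-covering theorem alone does not bound $m$; the bound uses the $\cK_+^2$ hypothesis again, since the disjoint cores of volume $\asymp\delta$ have boundary ``footprints'' of $\cH^{d-1}$-measure $\asymp\delta^{(d-1)/(d+1)}$ by your uniform cap estimate, and these footprints are disjoint on the compact hypersurface $\bd K$. You clearly have this in mind (it mirrors your lower-bound packing argument), but it is worth making explicit.
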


We now rephrase a result taken from \cite[Lemma 4.2]{VuConcentration}, which shows that the random convex hull
$K_\varphi(n)$ generated by $n$ independent random points in a convex body $K\in\cK(\RR^d)$ with probability density
$\varphi\in\cW(K)$ contains the weighted floating body $K_\delta^\varphi$ with high probability if $\delta$ is essentially of order $\frac{\ln n}{n}$.

\begin{lemma}\label{lem:VanVu}
    Fix $K\in\cK(\RR^d)$, $\varphi\in\cW(K)$, $n\in\NN$ and $K_\varphi(n)$ be the random polytope as defined in \eqref{eq:DefRandomPolytopes}.
    For any $\beta\in(0,\infty)$, there exist constants $c,N\in(0,\infty)$ such that, for all $n\geq N$,
    \begin{equation*}
            \PP\Big(K_{c\frac{\ln n}{n}}^\varphi \not \subset K_\varphi(n)\Big) \leq n^{-\beta}.
    \end{equation*}
\end{lemma}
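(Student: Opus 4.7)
The plan is to prove the lemma via a covering argument of Bárány--Larman type together with a union bound. The starting point is a geometric equivalence: $K_\delta^\varphi \not\subset K_\varphi(n)$ holds if and only if there exists a halfspace $H^+$ with $\Phi(K\cap H^+) > \delta$ containing none of the points $X_1,\dotsc,X_n$. Indeed, if $x \in K_\delta^\varphi \setminus K_\varphi(n)$, then since $K_\varphi(n)$ is convex and closed the separating hyperplane theorem yields such an $H^+$, and by definition of $K_\delta^\varphi$ one must have $\Phi(K\cap H^+)>\delta$; conversely, any $H^+$ with $\Phi(K\cap H^+)>\delta$ disjoint from $\{X_1,\dotsc,X_n\}$ witnesses a point of $K_\delta^\varphi$ outside $K_\varphi(n)$.

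Next, I would apply an economic cap cover to discretise the continuum of possible caps. The goal is a family of caps $C_1,\dotsc,C_m$ of $K$, together with sub-caps ($M$-regions) $M_i\subset C_i$, such that (i) $c_1\delta \leq \Phi(M_i)$ and $\Phi(C_i)\leq c_2\delta$, (ii) $K\setminus K_\delta^\varphi \subset \bigcup_i C_i$, (iii) every cap $C$ with $\Phi(C)\geq \delta$ contains some $M_i$, and (iv) the cardinality satisfies $m \leq c_3 \delta^{-(d-1)}$ (a crude bound sufficient for our purposes). To obtain this in the weighted setting, I would use Lemma \ref{lem:ComparisonWeightedVSunweighted} together with the defining bounds on $\varphi$ near the boundary from the class $\cW(K)$: for $\delta$ sufficiently small, caps of $\Phi$-measure of order $\delta$ are sandwiched between Lebesgue caps of comparable measure, so the classical Bárány--Larman economic cap cover (applied to the unweighted floating body) transfers at the cost of changing constants.

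Combining the two ingredients, the event $\{K_\delta^\varphi\not\subset K_\varphi(n)\}$ is contained in the event that some $M_i$ contains no $X_j$, and for each $i$ this has probability $(1-\Phi(M_i))^n \leq e^{-c_1 n\delta}$. The union bound gives
\begin{equation*}
    \PP\Big(K_\delta^\varphi\not\subset K_\varphi(n)\Big) \leq m\,e^{-c_1 n\delta} \leq c_3\,\delta^{-(d-1)}\,e^{-c_1 n\delta}.
\end{equation*}
Setting $\delta = c\,\frac{\ln n}{n}$ and choosing $c$ large enough in terms of $\beta,d,c_1,c_3$, the right-hand side becomes at most $n^{-\beta}$ for all $n\geq N$, as required.

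The main obstacle is securing the economic cap cover with the correct quantitative control in the weighted setup; once property (iii) is in hand the rest is a routine concentration argument. The reduction to the unweighted case via Lemma \ref{lem:ComparisonWeightedVSunweighted} is essentially tight because, for the range of $\delta$ we care about, all relevant caps lie in the region where $\varphi$ is continuous and two-sidedly bounded, so the geometric structure of $\Phi$-caps inherits that of Lebesgue caps up to constants.
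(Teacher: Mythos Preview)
The paper does not give a proof of this lemma; it simply records it as a restatement of \cite[Lemma~4.2]{VuConcentration}. Your sketch is a correct outline of the standard covering--union-bound argument behind such results and is essentially how Vu's original proof proceeds.

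One point worth flagging: your property~(iii) --- that every cap of $\Phi$-measure at least $\delta$ contains one of the $M_i$ --- is \emph{not} part of the economic cap covering of B\'ar\'any--Larman as usually stated (that theorem instead asserts that every cap disjoint from $K_\delta$ is \emph{contained in} some $C_i$, together with the covering $K\setminus K_\delta\subset\bigcup C_i$). What you actually need is closer to an $\varepsilon$-net statement for halfspaces with respect to $\Phi$. You can obtain it from the Macbeath-region machinery that underlies the covering: any cap of $\Phi$-measure $\geq\delta$, after translating its bounding hyperplane until it supports $K_{\delta}^\varphi$, contains a Macbeath region of measure comparable to $\delta$ centred on $\bd K_{\delta}^\varphi$, and a saturated system of such regions furnishes the $M_i$. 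Alternatively, parametrise tangent caps by their outer unit normal and take a sufficiently fine net on $\SS^{d-1}$. Either route gives $m$ polynomial in $1/\delta$ (in fact $m=O(1/\delta)$, since the $M_i$ are disjoint and each has measure $\gtrsim\delta$), which is all the union bound requires. Your reduction to the unweighted situation via Lemma~\ref{lem:ComparisonWeightedVSunweighted} is exactly the right way to transfer the unweighted covering to $\Phi$-caps.
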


In the final part of this subsection we recall some geometric constructions that are of a more technical nature, but which will be needed in our further arguments. First, we define the visibility region
\begin{equation*}
    \Delta(z,\delta) := \{x\in K\setminus \interior K_\delta: [z,x] \cap K_\delta^\varphi = \emptyset\},
\end{equation*}
for $\delta\geq 0$ and $z\in K$, i.e., $\Delta(z,\delta)$ is the set of all points $x\in K$ that can be seen from $z$ without passing through $K_\delta$, see Figure \ref{fig:vis_region}.
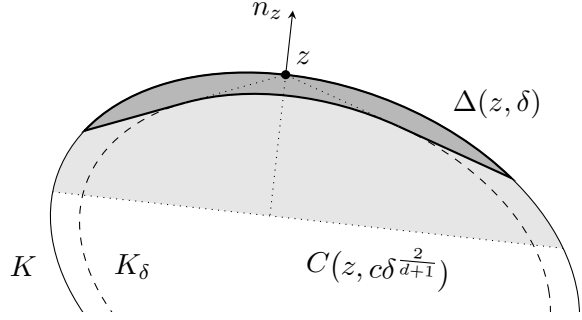
\begin{figure}[t]
    \centering
    \begin{tikzpicture}[scale=1.2]
        \clip (-3.5,-0.5) rectangle (3.5,3);
        \path[use as bounding box] (-3.5,-0.5) rectangle (3.5,3);
        \begin{scope}[rotate=-20, xscale=3, yscale=2, rotate=20] 

        \def\r{0.89}

        \draw (0,0) circle (1);
        \draw[dashed] (0,0) circle (\r);

        \coordinate (O) at (0,1);

        \coordinate (A1) at ({\r*sqrt(1-\r*\r)}, {\r*\r});
        \coordinate (B1) at ({2*\r*sqrt(1-\r*\r)},{2*\r*\r-1});

        \coordinate (A2) at ({-\r*sqrt(1-\r*\r)}, {\r*\r});
        \coordinate (B2) at ({-2*\r*sqrt(1-\r*\r)},{2*\r*\r-1});

        \fill[color=black, opacity=0.2] (A2)
            arc({180-atan(\r/sqrt(1-\r*\r))}:{atan(\r/sqrt(1-\r*\r))}:\r) -- (B1)
            arc({atan((2*\r*\r-1)/(2*\r*sqrt(1-\r*\r))}:{180-atan((2*\r*\r-1)/(2*\r*sqrt(1-\r*\r))}:1) -- cycle;
        \draw[thick] (A1) -- (B1) arc({atan((2*\r*\r-1)/(2*\r*sqrt(1-\r*\r))}:{180-atan((2*\r*\r-1)/(2*\r*sqrt(1-\r*\r))}:1) -- (A2);
        \draw[thick, white]  (A2) arc({180-atan(\r/sqrt(1-\r*\r))}:{atan(\r/sqrt(1-\r*\r))}:\r);
        \draw[thick] (A2) arc({180-atan(\r/sqrt(1-\r*\r))}:{atan(\r/sqrt(1-\r*\r))}:\r);
        \draw[dotted] (A1) -- (O) -- (A2);

        \begin{scope}[yshift=1cm]
            \fill[black] (0,0) [rotate=-20, xscale = 1/3, yscale = 1/2, rotate=20] circle(0.05) node[above right] {$z$};
        \end{scope}

        \coordinate (O1) at ([rotate=-40, xscale=1/3, yscale = 1/2, rotate=40] 0,0.8);
        \coordinate (O2) at ([rotate=0, xscale=1/3, yscale = 1/2, rotate=0] 4,0  );
        \draw[->,>=stealth] (O) --++ (O1) node[left] {$n_z$};

        \draw[dotted] (O) --++ ([rotate=180, scale=2.2] O1);
        \begin{scope}
            \clip circle (1);
            \fill[opacity=0.1] (O) ++ ([rotate=180, scale=2.2] O1) ++ (O2)
                --++ ([scale=-2]O2)  --++ ([scale=4.4] O1) --++ ([scale=2] O2) -- cycle;
            \draw[dotted] (O) ++ ([rotate=180, scale=2.2] O1) ++ (O2) --++ ([scale=-2]O2);
        \end{scope}

        \end{scope}

        \node[rotate=-5] at (0.7,0.) {$C\big(z,c\delta^{\frac{2}{d+1}}\big)$};
        \node at (-3.2,0.) {$K$};
        \node at (-2,0.) {$K_\delta$};
        \node at (2,1.8) {$\Delta(z,\delta)$};
    \end{tikzpicture}

    \caption{The visibility region $\Delta(z,\delta)$ and the cap $C\big(z,c\delta^{\frac{2}{d+1}}\big)$.}
    \label{fig:vis_region}
\end{figure}

Observe that for any $x\in \Delta(z,\delta)$ we have $x\not\in\interior K_\delta$ and therefore there is at least one hyperplane $H$ such that $x,z \in H^+$ and $\Vol(K\cap H^+)=\delta$, see e.g.\ \cite[Lemma 2]{SchuttWerner:1994}.
Thus $\Delta(z,\delta)$ is exactly the union of all caps $K\cap H^+$ that contain $z$ and cut off volume $\delta$ from $K$, i.e.,
\begin{equation*}
    \Delta(z,\delta) = \bigcup\left\{ K\cap H^+ : \text{$z\in H^+$ and $\Vol(K\cap H^+)=\delta$}\right\}.
\end{equation*}

The next series of geometric lemmas relies on the observation that a convex body $K\in\cK_+^2(\RR^d)$ locally looks like a ball from an equi-affine point of view, that is, there exists $r,R,t_0> 0$ such that for all $z\in \bd K$ we can find a volume preserving affine map $A_z$ that maps
$z$ to the origin, the normal direction $n_z$ is mapped to the coordinate direction $e_d$ and
$K$ is mapped to $K_z:=A_z(K)$ such that
\begin{equation*}
    C^{B(r)}(o,t) \subset C^{K_z}(o,t) \subset C^{B(R)}(o,t) \quad \text{for all $t\in[0,t_0]$},
\end{equation*}
where $B(s)= sB_2^d - se_d$ and $C^L(o,t) = \{x\in L : x\cdot e_d \geq - t\}$, see Figure \ref{fig:approx_ball}.
Furthermore, we can choose the affine map $A_z$ in such a way that there is no dilation in the normal direction, i.e., caps in direction $n_z$ of height $t$ will be mapped to caps in direction $e_d$ of height $t$. By approximating $K_z$ with the balls $B(r)$ and $B(R)$ we may then derive bounds uniformly for all $z\in \bd K$.

First, for a ball we may calculate that any cap of height $t$ has volume asymptotically of order $t^{\frac{d+1}{2}}$, as $t\to 0^+$.
For $K\in \cK_+^2(\RR^d)$ we therefore obtain the following uniform bound.

\begin{lemma}\label{lem:bound1}
    Let $K\in \cK_+^2(\RR^d)$ and $z\in \bd K$. Denote by $n_z$ the outer unit normal vector of $\bd K$ at $z$ and denote by $C(z,t)$ the cap of $K$ in direction $n_z$ of height $t$, that is,
        \begin{equation}\label{eq:DefCap}
            C(z,t) = \{x\in K : (x-z)\cdot n_z \geq -t\}.
        \end{equation}
    Then there exist $c_1,c_2,t_0\in (0,\infty)$ such that for all $t\in (0,t_0)$
    and all $z\in \bd K$ we have that
    \begin{equation*}
        c_1 t^{\frac{d+1}{2}}
        \leq \Vol\left(C(z,t)\right)
        \leq c_2 t^{\frac{d+1}{2}}.
    \end{equation*}

\end{lemma}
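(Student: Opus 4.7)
The plan is to exploit the equi-affine normalization summarized in the paragraph preceding the lemma statement. Namely, the $\cK_+^2$ assumption together with compactness of $\bd K$ and continuity and strict positivity of the Gauss--Kronecker curvature supplies uniform radii $0<r\leq R<\infty$ and a height $t_0>0$ with the property that, for every $z\in\bd K$, one can find a volume-preserving affine map $A_z$ sending $z$ to $o$ and $n_z$ to $e_d$ such that
\[
    C^{B(r)}(o,t)\subset C^{A_z(K)}(o,t)\subset C^{B(R)}(o,t)\qquad\text{for all }t\in[0,t_0],
\]
where $B(s)=sB_2^d-se_d$. Since $A_z$ preserves volume and, by construction, maps the cap $C(z,t)$ onto $C^{A_z(K)}(o,t)$, the task reduces to bounding the volume of a height-$t$ cap of a Euclidean ball uniformly in $z\in\bd K$.

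Next I would compute the cap volume on the ball $B(s)$ by slicing orthogonally to $e_d$: the cross-section at $e_d$-coordinate $-u$ is a $(d-1)$-dimensional Euclidean ball of radius $\sqrt{2su-u^2}$, whence
\[
    \Vol\bigl(C^{B(s)}(o,t)\bigr) = \Vol(B_2^{d-1}) \int_0^t (2su-u^2)^{(d-1)/2}\,\dint u.
\]
For $0\leq u\leq t\leq s$ the integrand is pinched between $(su)^{(d-1)/2}$ and $(2su)^{(d-1)/2}$, so a one-line integration yields two-sided bounds of the form $(\text{constant})\cdot s^{(d-1)/2}t^{(d+1)/2}$, where the constants depend only on $d$. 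Substituting $s=r$ for the lower and $s=R$ for the upper bound, and shrinking $t_0$ if necessary so that $t_0\leq r$, produces the desired inequality with explicit constants $c_1,c_2$ depending only on $d$, $r$ and $R$.

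The main obstacle of the argument is not really contained in this lemma, but rather in the equi-affine normalization itself: producing an affine map $A_z$ with \emph{uniform} sandwiching radii $r$ and $R$ for every $z\in\bd K$ requires the $C^2_+$ hypothesis combined with compactness of $\bd K$ in order to obtain uniform positive lower and upper bounds on the principal radii of curvature. This is a standard piece of affine differential geometry and is taken for granted in the paragraph preceding the lemma. Once the uniform affine sandwich is available, what remains is the elementary cap-volume computation sketched above, which immediately delivers the claimed $t^{(d+1)/2}$-scaling.
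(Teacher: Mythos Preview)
Your proposal is correct and follows exactly the approach the paper intends: the paper does not give a separate proof of this lemma but merely indicates in the paragraph preceding it that one uses the equi-affine sandwich $C^{B(r)}(o,t)\subset C^{K_z}(o,t)\subset C^{B(R)}(o,t)$ and the elementary fact that a height-$t$ cap of a ball has volume of order $t^{(d+1)/2}$. You have simply made that cap-volume computation explicit, which is precisely what the paper leaves to the reader.
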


A more precise statement was obtained by Leichtweiss \cite[Hilfssatz~2]{Leichtweiss:1986}, who showed that for $K\in\cK_+^2(\RR^d)$ one actually has that
\begin{equation}\label{eqn:cap_limit}
    \lim_{t\to 0^+} \frac{\Vol\left(C(z,t)\right)}{t^{\frac{d+1}{2}}} = 2^{\frac{d+1}{2}} \frac{\Vol(B_2^{d-1})}{d+1} H_{d-1}(z)^{-\frac{1}{2}}
    \quad \text{for all $z\in \bd K$.}
\end{equation}

Next, we observe that the visibility region $\Delta(z,\delta)$ can be bounded by caps of height asymptotically of order $\delta^{2/(d+1)}$, as $\delta\to 0^+$.

\begin{figure}[t]
    \centering
    \begin{tikzpicture}[scale=2.8]
        \def\t0{0.68}
        \def\r{0.6}
        \def\R{2}

        \path[use as bounding box] (-2.5,-1.3) rectangle (2.5,0.5);

        \begin{scope}
        \clip (-2,-1) rectangle (2,0);

        \begin{scope}
            \clip (0,-\R) circle(\R);
            \fill[black!10] (-2,-\t0) rectangle (2,0);
            \draw[dotted] (-2,-\t0) rectangle (2,0);
        \end{scope}

        \begin{scope}
            \clip[domain=-2:2, variable=\x, smooth] plot ({1.44*\x}, {0.35*(\x*\x*\x-3*\x*\x)}) -- cycle;
            \fill[black!20] (-2,-\t0) rectangle (2,0);
            \draw (-2,-\t0) -- (2,-\t0);
        \end{scope}
        \draw[black!50] (0,-\r) circle(\r);
        \draw[black!50] (0,-\R) circle(\R);
        \draw[domain=-2:2, variable=\x, smooth, thick] plot ({1.44*\x}, {0.35*(\x*\x*\x-3*\x*\x)});
        \end{scope}

        \draw[->, >=stealth] (-2, 0) -- (2,0) node[above] {$\RR^{d-1}$};
        \draw[->, >=stealth] (0, -1) -- (0,0.3) node[left] {$e_d$};

        \fill[black] (0,0) circle(0.02);

        \node[above left] at (0,0) {$o$};

        \node[color=black!70] at (1.3,-0.3)   {$B(R)$};
        \node[color=black!70] at (0.7,-0.9)   {$B(r)$};
        \node at (-1,-0.9) {$K_z$};

        \node at (0.9, -0.58) {$C^{K_z}(o,t)$};

        \node[below left] at (0,-\t0) {$t$};
    \end{tikzpicture}
    \caption{Illustration of the observation that for any convex body $K\in \cK_+^2(\RR^d)$ we can find $r,R> 0$ such that for all $z\in \bd K$ there is a volume preserving affine transformation that maps $z$ to the origin $o$ and $K$ into a position $K_z$ between the balls $B(r)$ and $B(R)$ locally around $o$.}
    \label{fig:approx_ball}
\end{figure}
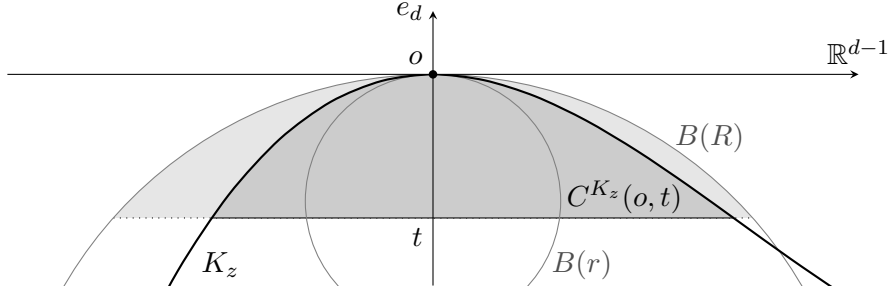

\begin{lemma}\label{lem:bound2}
    Let $K\in\cK_+^2(\RR^d)$. Then there exist $c_1,c_2,\delta_0\in (0,\infty)$ such that for all $\delta\in (0,\delta_0)$ and $z\in\bd K$ we have that
    \begin{equation*}
        C(z, c_1\delta^{\frac{2}{d+1}})
        \subset \Delta(z,\delta)
        \subset C(z,c_2\delta^{\frac{2}{d+1}}),
    \end{equation*}
    where $C(z,c_2\delta^{\frac{2}{d+1}})$ is defined as in \eqref{eq:DefCap}.
\end{lemma}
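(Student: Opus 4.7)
The plan is to use the volume-preserving affine sandwich stated just before Lemma \ref{lem:bound1} (and shown in Figure \ref{fig:approx_ball}). For each $z\in\bd K$ there is a volume-preserving affine map $A_z$ sending $z\mapsto o$, $n_z\mapsto e_d$, $K\mapsto K_z$ such that the cap sandwich $C^{B(r)}(o,t)\subset C^{K_z}(o,t)\subset C^{B(R)}(o,t)$ holds for every $t\in[0,t_0]$, with $r,R,t_0>0$ uniform in $z$. Since $A_z$ preserves volume and does not dilate in the normal direction, it sends volume-$\delta$ caps of $K$ to volume-$\delta$ caps of $K_z$ and $C(z,t)$ to $C^{K_z}(o,t)$. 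It therefore suffices to prove, with constants uniform in $z$,
\[
    C^{K_z}\big(o,c_1\delta^{\frac{2}{d+1}}\big) \subset \Delta^{K_z}(o,\delta) \subset C^{K_z}\big(o,c_2\delta^{\frac{2}{d+1}}\big),
\]
and for $\delta$ small all caps that appear will sit inside the region $\{y_d\geq -t_0\}$ where the sandwich applies.

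For the lower inclusion, I would take $x\in K_z$ with $x_d\geq -c_1\delta^{2/(d+1)}$ and consider the horizontal cap $C^{K_z}(o,c_1\delta^{2/(d+1)})$, which contains $o$ and $x$ and, by Lemma \ref{lem:bound1}, has volume at most $c_2 c_1^{(d+1)/2}\delta$, i.e.\ at most $\delta$ once $c_1$ is chosen small. Translating the cutting hyperplane $\{y_d=-c_1\delta^{2/(d+1)}\}$ parallelly and further downward continuously enlarges the cap up to $\Vol(K_z)$, so the intermediate value theorem will yield a parallel hyperplane $H^*$ with cap volume exactly $\delta$ whose positive half-space still contains $o$ and $x$, giving $x\in\Delta^{K_z}(o,\delta)$.

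The upper inclusion will require more work. Given $x\in\Delta^{K_z}(o,\delta)$, I would pick $H=\{u\cdot y=s\}$ with $o,x\in H^+$ and $\Vol(K_z\cap H^+)=\delta$. The sandwich forces
\[
    \Vol(B(r)\cap H^+)\leq \delta\leq \Vol(B(R)\cap H^+).
\]
A short computation with ball-cap volumes, combined with $o\in H^+\cap\bd B(r)\cap\bd B(R)$ (so that the cap heights in direction $u$ read $h_r=r(1-u\cdot e_d)-s$ and $h_R=R(1-u\cdot e_d)-s$), will pin down the geometry of $H$: both heights are $\Theta(\delta^{2/(d+1)})$, whence $1-u\cdot e_d=O(\delta^{2/(d+1)})$, $|s|=O(\delta^{2/(d+1)})$, and the tangential diameter of the cap is $O(\delta^{1/(d+1)})$. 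Decomposing $u=\alpha e_d+\beta v$ with $v\perp e_d$, $\|v\|=1$, this gives $\alpha\geq 1-O(\delta^{2/(d+1)})$, $|\beta|=O(\delta^{1/(d+1)})$, and $\|x\|=O(\delta^{1/(d+1)})$. Substituting into $\alpha x_d+\beta v\cdot x\geq s$ then yields $x_d\geq -O(\delta^{2/(d+1)})$, i.e.\ $x\in C^{K_z}(o,c_2\delta^{2/(d+1)})$.

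The main obstacle is the upper inclusion, specifically the worry that for a tilted cutting hyperplane the cap $K_z\cap H^+$ could extend far from $o$ tangentially and thereby let $x$ reach deep into $K_z$ in the $-e_d$ direction. The sandwich resolves this: the constraint $o\in H^+\cap\bd B(r)$ forces $u$ to be close to $e_d$ in a quantitative way that exactly dominates the tangential extent, so the two small errors $\beta\cdot v\cdot x$ and $s$ combine into the correct $O(\delta^{2/(d+1)})$ lower bound on $x_d$. Uniformity of the constants $c_1,c_2,\delta_0$ in $z$ is inherited from the uniformity of $r,R,t_0$ in the sandwich construction, which follows from compactness of $\bd K$ and continuity of the Gauss--Kronecker curvature for $K\in\cK_+^2(\RR^d)$.
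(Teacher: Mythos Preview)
Your proposal is correct and follows essentially the same route as the paper: both the lower and upper inclusions are obtained via the volume-preserving affine sandwich between $B(r)$ and $B(R)$, reducing everything to ball geometry. For the lower inclusion, the paper uses the function $t_z(\delta)$ with $\Vol(C(z,t_z(\delta)))=\delta$ together with the limit~\eqref{eqn:cap_limit}, while you invoke Lemma~\ref{lem:bound1} and the intermediate value theorem---these are the same argument phrased slightly differently.

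For the upper inclusion the two executions diverge in style but not in substance. The paper first verifies the ball statement ``any volume-$\delta$ cap of $B(r)$ through $o$ lies in $C^{B(r)}(o,c_2\delta^{2/(d+1)})$'' and then passes to $B(R)$ by the clean scaling observation $B(R)\cap H^+\subset (R/r)(B(r)\cap H^+)$, which immediately gives the $B(R)$-cap bound without any further computation. You instead parametrize $H$ by $(u,s)$, read off the cap heights $h_r,h_R$, and bound $x_d$ directly. One small imprecision: you only get $h_r\le c\,\delta^{2/(d+1)}$ from $\Vol(B(r)\cap H^+)\le\delta$ (not a two-sided $\Theta$), but that upper bound alone already yields $1-u\cdot e_d=O(\delta^{2/(d+1)})$ and $|s|=O(\delta^{2/(d+1)})$ (using $s\le0$), hence $h_R=O(\delta^{2/(d+1)})$ and the rest of your chain goes through. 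Both arguments tacitly use that for small $\delta$ all relevant caps stay in the slab $\{x_d\ge -t_0\}$; you note this explicitly, and it is indeed a routine compactness consequence of the sandwich.
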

\begin{proof}
    Let $z\in \bd K$ be arbitrary.
    Notice that for $s\leq t$ we have $C(z,s)\subset C(z,t)$ and by continuity there is $t_z(\delta)$ such that $\Vol(C(z,t_z(\delta))) = \delta$.
    Then $C(z,t_z(\delta)) \subset \Delta(z,\delta)$.
    Recall that by \eqref{eqn:cap_limit}
    \begin{equation*}
        \lim_{\delta\to 0^+} \frac{t_z(\delta)}{\delta^{\frac{2}{d+1}}}
        = \frac{1}{2} \left(\frac{n+1}{\Vol(B_2^{d-1})}\right)^{\frac{2}{d+1}} H_{d-1}(z)^{\frac{1}{d+1}} \quad \text{for all $z\in \bd K$.}
    \end{equation*}
    By compactness of $\bd K$ and since $t_z(\delta) = t(z,\delta)$ is monotone in $\delta$ and continuous in both arguments,
    we find $c_1,\delta_1\in (0,\infty)$ such that for all $z\in\bd K$ and $\delta\in (0,\delta_1)$, we have
    \begin{equation*}
        t_z(\delta) \geq c_1 \delta^{\frac{2}{d+1}},
    \end{equation*}
    which yields $\Delta(z,\delta) \supset C(z, c_1 \delta^{\frac{2}{d+1}})$ for all $z\in\bd K$ and $\delta\in (0,\delta_1)$.

    \medskip
    For the upper bound we repeat the construction as illustrated in Figure \ref{fig:approx_ball}. First set
    \begin{equation*}
        r:= \tfrac{1}{2} \min_{z\in\bd K} H_{d-1}(z)^{\frac{2}{d-1}}
        \quad \text{and}\quad
        R:= 2 \max_{z\in \bd K} H_{d-1}(z)^{\frac{2}{d-1}}.
    \end{equation*}
    Now let $z\in \bd K$ be arbitrary. We consider the volume preserving affine transformation $\alpha_z^K$ defined by
    \begin{equation*}
        \alpha_z^K(e_i) = z+ \frac{\kappa_i(z)}{\sqrt{\rho_z}} v_i, \quad \text{for $i=1,\dotsc,d-1$},
    \end{equation*}
    and $\alpha_z^K(e_d) = z + n_z$, where $\kappa_i(z)$ are the principal curvatures of $\bd K$ at $z$ and $\{v_i\}_{i=1}^{n-1}$ is the corresponding orthonormal basis of principal directions and where we set
    \begin{equation*}
        \rho_z := H_{d-1}(z)^{\frac{2}{d-1}} = \prod_{i=1}^{d-1} \kappa_i(z)^{\frac{2}{d-1}}.
    \end{equation*}
    We further put $B(s) := B_2^d(-s e_d, s)$, i.e, $B(s)$ is the ball of radius $s$ such that $o$ is a boundary point with outer unit normal $e_d$.
    The ball $B(\rho_z)$ is transformed by $\alpha_z^K$ into the standard approximating ellipsoid $\cE$ of $K$ at $z$, i.e.,
    we have that
    \begin{equation*}
        \kappa_i^\cE(z) = \kappa_i^K(z),\quad \text{for $i=1,\dotsc,d-1$,}
    \end{equation*}
    see \cite[Sec.\ 1.6]{SchuttWerner:2003}.
    Notice that by our choice of $r$ and $R$ we have $B(r) \subset B(\rho_z)\subset B(R)$ for all $z\in\bd K$.
    Now set $K_z := (\alpha_z^K)^{-1}(K)$.
    Again, by the choice of $r$ and $R$ there exists $t_0\in (0,\infty)$ independent of $z\in \bd K$ such that for all $t\in [0,t_0]$ we have that
    \begin{equation*}
        C^{B(r)}(o,t) \subset C^{K_z}(o,t) \subset C^{B(R)}(o,t).
    \end{equation*}
    For the ball $B(r)$ we may verify that there are $c_2,\delta_2\in (0,\infty)$ such that for all $\delta\in (0,\delta_2)$ we have that if $B(r)\cap H^+$
    is a cap of volume $\delta$ such that $o\in H^+$, then
    \begin{align*}
        B(r)\cap H^+&\subset C^{B(r)}(o,c_2\delta^{\frac{2}{d+1}}) = \{x\in B(r) : x\cdot e_d \geq - c_2\delta^{\frac{2}{d+1}}\}.
     \end{align*}
	Since $B(R)\cap H^+ \subset \frac{R}{r} (B(r)\cap H^+)$, this also yields
	\begin{align*}
        B(R)\cap H^+ &\subset C^{B(R)}(o,c_2(R/r)\delta^{\frac{2}{d+1}}).
    \end{align*}
    Thus for all $z\in \bd K$ and $\delta \in (0,\delta_2)$ we derive that
    \begin{align*}
        \Delta(z,\delta)
        &=\bigcup\{ K\cap H^+ : \text{$z\in H^+$ and $\Vol(K\cap H^+)=\delta$}\}\\
        &\subset \alpha_z^K \left[K_z\cap \bigcup\{B(R) \cap H^+ : \text{$o\in H^+$ and $\Vol(B(r)\cap H^+)=\delta$}\}\right]\\
        &\subset \alpha_z^K \left[K_z \cap C^{B(R)}\left(o,c_2(R/r)\delta^{\frac{2}{d+1}}\right)\right]\\
        &= C(z,c_2(R/r)\delta^{\frac{2}{d+1}}).
    \end{align*}
    The lemma is now complete by setting $\delta_0:=\min\{\delta_1,\delta_2\}$.
\end{proof}

As a consequence, Lemma \ref{lem:bound1} and Lemma \ref{lem:bound2} yield that there exist $c_1,c_2,\delta_0\in (0,\infty)$ such that for all $\delta\in (0,\delta_0)$ and $z\in\bd K$,
\begin{equation*}
    c_1 \delta \leq \Vol(\Delta(z,\delta)) \leq c_2 \delta.
\end{equation*}
Moreover, the volume of the union of $\delta$ volume caps that intersect a fixed $\delta$ volume cap is bounded above by a constant multiple of $\delta$ as $\delta \to 0^+$, see e.g.\ \cite[Lemma 6.3]{VuConcentration}. We therefore have the following corollary to Lemma \ref{lem:bound1} and Lemma \ref{lem:bound2}.

\begin{corollary}\label{cor:est_vis_region}
    Let $K\in\cK_+^2(\RR^d)$. Then there exist $c_1,c_2,\delta_0\in (0,\infty)$ such that for all $\delta \in (0,\delta_0)$ and for all $z\in \bd K$,
    \begin{equation}\label{eqn:upper_bound_union}
        c_1\delta \leq \Vol(\Delta(z,\delta)) \leq \Vol\left(\bigcup\left\{ \Delta(x,\delta) : x\in \Delta(z,\delta) \right\} \right) \leq c_2\delta.
    \end{equation}
\end{corollary}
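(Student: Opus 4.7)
The plan is to prove the three inequalities in \eqref{eqn:upper_bound_union} one at a time, with the first two being essentially immediate consequences of material already developed above, and the third requiring a short argument combined with the fact cited from \cite[Lemma 6.3]{VuConcentration}.

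For the lower bound I would chain Lemma~\ref{lem:bound2} and Lemma~\ref{lem:bound1}: by Lemma~\ref{lem:bound2} there exist constants $c_1',\delta_0\in(0,\infty)$, independent of $z\in\bd K$, such that $\Delta(z,\delta)\supset C(z,c_1'\delta^{2/(d+1)})$ for all $\delta\in(0,\delta_0)$, and by Lemma~\ref{lem:bound1} this cap has volume at least a constant multiple of $\bigl(c_1'\delta^{2/(d+1)}\bigr)^{(d+1)/2}=(c_1')^{(d+1)/2}\delta$. The middle inequality is then trivial: since $z\in\bd K$ does not lie in $\interior K_\delta$ and the degenerate segment $[z,z]=\{z\}$ has empty intersection with $K_\delta^\varphi$ (for $\delta$ small enough that $K_\delta^\varphi$ is strictly contained in $\interior K$), we have $z\in\Delta(z,\delta)$, so $\Delta(z,\delta)$ is itself one of the sets appearing in the union on the right hand side.

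For the upper bound I would first rewrite the union set-theoretically: since $\Delta(x,\delta)$ is, by construction, the union of all $\delta$-caps $K\cap H^+$ containing $x$, one has
\begin{equation*}
    \bigcup_{x\in\Delta(z,\delta)}\Delta(x,\delta)
    = \bigcup\bigl\{K\cap H^+: K\cap H^+ \text{ is a $\delta$-cap with } (K\cap H^+)\cap\Delta(z,\delta)\neq\emptyset\bigr\}.
\end{equation*}
By Lemma~\ref{lem:bound2}, $\Delta(z,\delta)\subset C(z,c_2\delta^{2/(d+1)})$, which by Lemma~\ref{lem:bound1} has volume of order at most $\delta$; in particular it is contained in any $\delta$-cap at $z$ once $\delta$ is sufficiently small. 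Hence each $\delta$-cap appearing in the union intersects a fixed $\delta$-cap through $z$, and the desired bound $\Vol(\bigcup\Delta(x,\delta))\leq c_2\delta$ follows directly from \cite[Lemma 6.3]{VuConcentration}; uniformity in $z\in\bd K$ is obtained exactly as in the proof of Lemma~\ref{lem:bound2}, by invoking compactness of $\bd K$ and the sandwich of $K$ between osculating balls $B(r)$ and $B(R)$ near each boundary point.

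The main technical point, as in Lemma~\ref{lem:bound2}, is to keep the constants $c_1$, $c_2$ independent of $z$; this is the reason for the $\cK_+^2(\RR^d)$ assumption, which supplies the uniform two-sided ball bound used in both Lemma~\ref{lem:bound1} and Lemma~\ref{lem:bound2}. Otherwise the proof is essentially bookkeeping and an appeal to Vu's estimate.
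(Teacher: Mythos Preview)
Your approach is essentially the same as the paper's: the lower bound comes from chaining Lemma~\ref{lem:bound2} with Lemma~\ref{lem:bound1}, the middle inequality is trivial containment, and the upper bound is reduced to the cap estimate \cite[Lemma~6.3]{VuConcentration}. One small imprecision: the claim that $\Delta(z,\delta)$ ``is contained in any $\delta$-cap at $z$'' is not correct as written, since $\Delta(z,\delta)$ is precisely the \emph{union} of all $\delta$-caps through $z$ and hence is generally strictly larger than any single one of them; what you actually have (and all you need) is that $\Delta(z,\delta)\subset C(z,c_2\delta^{2/(d+1)})$, a fixed cap of volume $c'\delta$ for some constant $c'$, so every $\delta$-cap in the union meets this fixed $c'\delta$-cap and Vu's estimate applied with parameter $c'\delta$ gives the bound.
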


\subsection{Stein's method for normal approximation}

Stein's method is a well known and very flexible device for proving probabilistic limit theorems. In this paper we shall use a version of Stein's method for normal approximation, which was originally introduced in \cite{Chatterjee}. We shall use a variation of the main result from this work taken from \cite{LachPecc}, which has turned out to be particularly useful for applications in stochastic geometry.

To introduce the set-up we denote by $\XX$ a Polish space and consider a fixed Borel probability measure $\mu$ on $\XX$.
For $n\in\NN$, let $f:\bigcup_{k=1}^n\XX^k\to\RR$ be a symmetric measurable function.
That is, $f$ is a symmetric function acting on configurations of at most $n$ points of $\XX$. If $x=(x_1,\ldots,x_n)\in\XX^n$ and $i\in\{1,\ldots,n\}$, then
\begin{equation*}
    x_{\neg i}:= (x_1,\dotsc,x_{i-1}, x_{i+1},\dotsc,x_n)\in\XX^{n-1},
\end{equation*}
will denote the $(n-1)$-dimensional vector, which is obtained from $x$ be removing the $i$th coordinate $x_i$.
Similarly, for $i,j\in\{1,\ldots,n\}$ with $i<j$ we set $x_{\neg i,j}\in\XX^{n-2}$ to be the $(n-2)$-dimensional vector arising from $x$ by removing
$x_i$ and $x_j$. The first- and second-order difference operators of $f$ are defined by
\begin{align*}
    D_i f(x) &:= f(x)-f(x_{\neg i}),
\intertext{and}
    D_{i,j} f(x) &:= D_i(D_j f(x)) = f(x)-f(x_{\neg i})-f(x_{\neg j})+f(x_{\neg i,j}).
\end{align*}

Now let $X=(X_1,\ldots,X_n)$ be a random vector distributed with respect to $\mu$ with coordinates $X_1,\ldots,X_n\in\XX$, and let $X'$ and $X''$
be independent random copies of $X$ with coordinates $X_i'$ and $X_i''$, $i\in\{1,\ldots,n\}$, respectively.
By a recombination of $\{X,X'\!\!,X''\}$ we understand a random vector $Z=(Z_1,\ldots,Z_n)$ such that $Z_i\in\{X_i,X_i',X''_i\}$ for all $i\in\{1,\ldots,n\}$.
Using the notion of recombinations we can now introduce the following four quantities, which will turn out to play an important role:
\begin{align*}
    \gamma_1 & := \sup_{(Y,Y'\!\!,Z,Z')}\EE\big[\mathbf{1}\{D_{1,2}f(Y)\neq 0\}\mathbf{1}\{D_{1,3}f(Y')\neq 0\}\,(D_2f(Z))^2(D_3f(Z'))^2\big]\,,\\
    \gamma_2 & := \sup_{(Y,Z,Z')}\EE\big[\mathbf{1}\{D_{1,2}f(Y)\neq 0\}\,(D_1f(Z))^2(D_2f(Z'))^2\big]\,,\\
    \gamma_3 &:= \EE|D_1f(X)|^3\,,\\
    \gamma_4 &:= \EE|D_1f(X)|^4\,,
\end{align*}
where in the definition of $\gamma_1$ the supremum is taken over all $4$-tuples of random vectors $Y, Y'\!\!, Z$, and $Z'$,
which are recombinations of $\{X,X'\!\!,X''\}$, while in the definition of $\gamma_2$ the supremum is taken over all $3$-tuples of random vectors $Y$, $Z$ and $Z'$, which are recombinations of $\{X,X'\!\!,X''\}$.

\smallskip
Next, we recall the definition of the Wasserstein distance $d_{\Wass}(\cL(W),\cL(V))$
between the laws $\cL(W)$ and $\cL(V)$ of two real-valued random variables $W$ and $V$ on $\RR$.
Denoting by $\Lip_1$ the space of Lipschitz functions $f:\RR\to\RR$ with Lipschitz constant less than or equal to $1$ we have that
\begin{equation}\label{eq:DefWasserstein}
    d_{\Wass}(\cL(W),\cL(V)) := \sup_{g\in\Lip_1}\big|\EE g(W)-\EE g(V)\big|\,.
\end{equation}
Note that if $(W_n)_{n\in\NN}$ is a sequence of random variables satisfying $d_{\Wass}(\cL(W_n),\cL(V))\to 0$, as $n\to\infty$, then we have the convergence in distribution $W_n\overset{d}{\longrightarrow}V$, see \cite[Chapter 8.3]{Bogachev}.

\smallskip
We are now prepared to rephrase the following result from \cite{Chatterjee, LachPecc}.
\begin{lemma}\label{lem:CLTLachiezeReyPeccati}
    Fix $n\in\mathbb{N}$. Let $X_1,\dotsc,X_n$ be independent random vectors in a Polish space $\XX$ with respect to a Borel probability measure $\mu$ and let
    $f:\bigcup_{k=1}^n\XX^k\to\RR$ be a symmetric Borel measurable function.
    Define $W(n):=f(X_1,\ldots,X_n)$ and assume that $\EE\, W(n)=0$ and $\EE\, W(n)^2\in(0,\infty)$.
    Then there exists an absolute constant $c>0$ such that
    \begin{equation}\label{eq:BoundWassersteinGeneral}
    \begin{split}
        d_{\Wass}\left(\cL\left(\frac{W(n)}{\sqrt{\Var W(n)}}\right),\cL(Z)\right)
        &\leq \frac{c\sqrt{n}}{\Var W(n)}\left(\sqrt{n^2\gamma_1}+\sqrt{n\gamma_2}\right.\\
        &\hspace{3cm}\left.+ \sqrt{\frac{n}{\Var W(n)}}\, \gamma_3 +\sqrt{\gamma_4}\right),
    \end{split}
    \end{equation}
    where $Z$ is a standard Gaussian random variable.
    In particular, if the right hand side tends to zero, as $n\to\infty$, then $W(n) / \sqrt{\Var W(n)}\overset{d}{\longrightarrow}Z$.
\end{lemma}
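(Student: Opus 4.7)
The plan is to apply Stein's method via the Stein operator $\mathcal{A}g(w) := g'(w) - wg(w)$. For any $1$-Lipschitz test function $h$, the Stein equation $\mathcal{A}g_h = h - \EE h(Z)$ admits a solution $g_h$ with $\|g_h'\|_\infty$ and $\|g_h''\|_\infty$ bounded by absolute constants, so
\begin{equation*}
    d_{\Wass}\bigl(\cL(W(n)/\sigma_n),\,\cL(Z)\bigr) \leq \sup_g \bigl|\EE\,\mathcal{A}g(W(n)/\sigma_n)\bigr|,\qquad \sigma_n := \sqrt{\Var W(n)},
\end{equation*}
the supremum running over $g$ with uniformly bounded first and second derivatives. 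Everything reduces to estimating the right-hand side.

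To that end I would construct a resampling coupling. Let $X = (X_1,\dotsc,X_n)$, let $X'$ and $X''$ be independent copies of $X$, and set $X^{(i)} := (X_1,\dotsc,X_{i-1},X_i',X_{i+1},\dotsc,X_n)$ with $W^{(i)} := f(X^{(i)})$. Independence and symmetry give the Efron--Stein identity $\Var W(n) = \tfrac{1}{2}\sum_{i=1}^n \EE[(f(X) - W^{(i)})^2]$; combining the exchangeability identity $\EE[W(n)\,g(W(n)/\sigma_n)] = \tfrac{1}{2}\sum_i \EE[D_if(X)\,(g(W(n)/\sigma_n) - g(W^{(i)}/\sigma_n))]$ with a first-order Taylor expansion of the inner difference then yields
\begin{equation*}
    \EE\bigl[\tfrac{W(n)}{\sigma_n}\,g(W(n)/\sigma_n)\bigr] = \tfrac{1}{\sigma_n^2}\EE\bigl[g'(W(n)/\sigma_n)\,T\bigr] + R,\qquad T := \tfrac{1}{2}\sum_{i=1}^n D_if(X)\,(f(X) - W^{(i)}),
\end{equation*}
where $\EE T = \sigma_n^2$ and $|R| \leq Cn\|g''\|_\infty\,\EE|D_1f(X)|^3/\sigma_n^3$. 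Subtracting this from $\EE[g'(W(n)/\sigma_n)]$ and applying Cauchy--Schwarz gives
\begin{equation*}
    \bigl|\EE\,\mathcal{A}g(W(n)/\sigma_n)\bigr| \leq \frac{\|g'\|_\infty\,\sqrt{\Var T}}{\sigma_n^2} + \frac{Cn\|g''\|_\infty\,\EE|D_1f(X)|^3}{\sigma_n^3},
\end{equation*}
and the second summand already accounts for the $\gamma_3$ contribution of the target estimate.

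The crux, and the main obstacle, is the control of $\Var T$. I would expand $\Var T$ as a double sum over pairs $(i,j)$ of covariances $\mathrm{Cov}(D_if(X)\,\Delta_i,\,D_jf(X)\,\Delta_j)$, where $\Delta_i := f(X) - W^{(i)}$, and then decouple each covariance by introducing further resamplings of the coordinates shared by the two factors; this is done via the identity $\mathrm{Cov}(A(X),B(X)) = \sum_k \EE[(A(X) - A(X^{(k)}))(B(X) - \EE[B(X)\mid X_{\neg k}])]$ applied iteratively. The decisive observation is that after this manipulation, any contribution in which the mixed difference $D_{i,j}f$ vanishes on every occurring recombination of $\{X,X',X''\}$ is zero, so what remains is bounded, via Cauchy--Schwarz, by expressions of the form appearing in $\gamma_1$ (four distinct resampling positions) and $\gamma_2$ (three positions); the diagonal $i=j$ contributes at most $n\gamma_4$. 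Summing the resulting bounds over the index configurations produces $\Var T \leq C(n^3\gamma_1 + n^2\gamma_2 + n\gamma_4)$, and plugging this into the Stein inequality above gives the claimed bound. The hard part is precisely this combinatorial bookkeeping: one must systematically realise each covariance as an expectation against a recombination of $\{X,X',X''\}$ carrying the indicator $\mathbf{1}\{D_{1,2}f \neq 0\}$, which is the form in which the bound becomes useful in the random polytope applications of the preceding sections, where $\{D_{1,2}f\neq 0\}$ is a sparse geometric event.
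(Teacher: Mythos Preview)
The paper does not prove this lemma at all: it is stated as a quotation of a known result, introduced with ``We are now prepared to rephrase the following result from \cite{Chatterjee, LachPecc}'', and no argument is given. So there is nothing in the paper to compare your proposal against.

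That said, your sketch is broadly the strategy of the cited references: Stein's equation reduces the Wasserstein distance to $|\EE[g'(W/\sigma)-Wg(W/\sigma)/\sigma]|$, a resampling (or ``swap'') coupling produces a random quantity $T$ with $\EE T=\sigma^2$ whose fluctuation controls the main term, and the Taylor remainder yields the $\gamma_3$ contribution. Two cautions if you intend to flesh this out. First, you conflate the remove-one operator $D_if(X)=f(X)-f(X_{\neg i})$ with the resample-one increment $f(X)-f(X^{(i)})$; in \cite{LachPecc} both appear, and the precise definition of $T$ (and hence the identity $\EE T=\sigma^2$) depends on getting this straight. Second, the passage from $\Var T$ to the $\gamma_1,\gamma_2,\gamma_4$ bounds is the substantive part of \cite{LachPecc}; your description (``apply the covariance identity iteratively, observe that terms with vanishing mixed differences drop out'') names the mechanism but hides real work, in particular the careful enumeration of which recombinations arise and why the suprema in the definitions of $\gamma_1,\gamma_2$ suffice. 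If your goal is only to justify citing the lemma, what you have is adequate as an informal summary; if you mean it as a proof, those two points need to be made precise.
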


\section{Proof of Theorem \ref{thm:WeightedVolumeEuclidean}}\label{sec:ProofWeighted}

\subsection{Further notation}

Before entering the details of the proof of Theorem \ref{thm:WeightedVolumeEuclidean} let us introduce some further notation, which is frequently applied below. We shall indicate by $H(u,t)$ the hyperplane in $\RR^d$ with unit normal direction $u\in\SS^{d-1}$ and signed distance $t\in \RR$ from the origin, i.e., $H(u,t) = \{x\in \RR^d: x\cdot u = t\}$. Moreover, $e_1,\ldots,e_d$ stands for the standard orthonormal basis in $\RR^d$.

If $Y,X_1,\ldots,X_m$ are random variables and $F=F(Y,X_1,\ldots,X_m)$ is a measurable function of these random variables,
then we write $\Var_Y F$ for the variance taken with respect to $Y$ only, that is,
for the conditional variance $\Var(F|X_1,\ldots,X_m)$ of $F$ given $X_1,\ldots,X_m$.

For two sequences $(a_n)_{n\in\NN}$ and $(b_n)_{n\in\NN}$ we write $a_n\sim b_n$ to indicate that $a_n/b_n\to c$, as $n\to\infty$, where $c\in(0,\infty)$ is some constant independent of $n$. Furthermore, we write $a_n\gtrsim b_n$ if there exists a constant $c>0$, which is independent of $n$, such that $a_n\geq cb_n$.

\subsection{A lower variance bound}

The first step in the proof of Theorem \ref{thm:WeightedVolumeEuclidean} is a lower bound on the variance of the weighted volume $\Psi(K_\varphi(n))$ of the weighted random polytopes $K_\varphi(n)$. For $\Phi$ and $\Psi$ being the uniform distribution on a convex body $K\in\cK_+^2(\RR^d)$ a lower variance bound was proved in \cite{ReitznerCLT2005}. Essentially following the ideas in \cite{ReitznerCLT2005} we shall extend this result to the weighted case. It turns out that the order of the lower bound for the variance is independent of the choice of the weight functions, they just affect the constant.

\begin{theorem}\label{thm:LowerVarianceBound}
    Fix $K\in\cK_+^2(\RR^d)$, $\varphi\in\cW(K)$, $n\in\NN$ and $K_\varphi(n)$ be the weighed random polytope defined as in \eqref{eq:DefRandomPolytopes}.
    Then there exist constants $c,N\in(0,\infty)$ such that for all $n\geq N$ we have
    \begin{equation*}
        \Var\Psi(K_\varphi(n))\geq c\,n^{-\frac{d+3}{d+1}}.
    \end{equation*}
\end{theorem}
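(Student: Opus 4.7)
The plan is to adapt Reitzner's variance lower bound from \cite{ReitznerCLT2005} (for the uniform Euclidean case) to the weighted setting. The structural observation that makes this possible is that, by property (c) of the definition of $\cW(K)$, both $\varphi$ and $\psi$ are continuous and bounded above and below by positive constants on an outer shell $K\setminus L$. Combined with Lemma~\ref{lem:ComparisonWeightedVSunweighted}, this makes $\Phi$, $\Psi$ and $\Vol$ comparable up to multiplicative constants in the boundary zone where the geometry of $K_\varphi(n)$ is actually decided, so the quantitative cap estimates from the unweighted theory carry over with altered constants only.

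\emph{Cap dissection.} Set $\delta := c_0/n$ for a small constant $c_0>0$. Using Lemma~\ref{lem:bound1}, Lemma~\ref{lem:bound2} and Corollary~\ref{cor:est_vis_region} together with a Vitali-type covering of $\bd K$, I construct a family of disjoint caps $C_1,\dotsc,C_m$ contained in $K\setminus K_\delta$ with $m\asymp n^{(d-1)/(d+1)}$ and $\Phi(C_i)\asymp\Vol(C_i)\asymp 1/n$ uniformly in $i$. Inside each $C_i$ I pick two disjoint test sub-regions $C_i^-$ (near the base, close to $\bd K_\delta$) and $C_i^+$ (near the apex on $\bd K$), each of $\Phi$-measure $\asymp 1/n$.

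\emph{Conditional variance bound.} Write $W:=\Psi(K_\varphi(n))$ and let $\mathcal{G}$ be the $\sigma$-algebra generated by the counts $N_i := \#\{j : X_j\in C_i\}$, the positions of all sample points lying outside $\bigcup_i C_i$, and the positions of all sample points in caps $C_i$ with $N_i \neq 1$. Conditionally on $\mathcal{G}$, the unique points $X_i^\star$ (for $i$ with $N_i=1$) are independent with $X_i^\star\sim\Phi(\,\cdot\,|C_i)$, and the conditional Hoeffding ANOVA decomposition in the variables $(X_i^\star)_{i:N_i=1}$ gives the orthogonality inequality
\begin{equation*}
\Var(W\mid\mathcal{G}) \;\geq\; \sum_{i:\,N_i=1} \Var\bigl(\EE[W\mid\mathcal{G},X_i^\star]\bigm|\mathcal{G}\bigr).
\end{equation*}
For each $i$ I introduce a ``clean'' event $B_i\in\mathcal{G}$ on which (a) the sample points in a prescribed local neighbourhood of $C_i$ are absent (apart from the possible lone point in $C_i$ itself) and (b) the convex hull of the sample points lying in $K_\delta^\varphi$ is already separated from $\bd K$ by $C_i^-$. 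On $B_i\cap\{N_i=1\}$, a lone point in $C_i^+$ is a vertex of $K_\varphi(n)$ carving off a weighted Schütt cap of $\Psi$-measure $\asymp 1/n$, whereas a lone point in $C_i^-$ leaves $W$ almost unchanged; the map $x\mapsto\EE[W\mid\mathcal{G},X_i^\star=x]$ therefore takes values at least $c_1/n$ apart on $C_i^+$ versus $C_i^-$, which forces $\Var(\EE[W\mid\mathcal{G},X_i^\star]\mid\mathcal{G})\gtrsim n^{-2}\mathbf{1}_{B_i\cap\{N_i=1\}}$. Since $n\Phi(C_i)\asymp 1$ and $B_i$ only concerns a uniformly bounded number of neighbouring caps, a Poisson/Chebyshev estimate yields $\PP(B_i\cap\{N_i=1\})\gtrsim 1$ uniformly in $i$. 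Summation over $i$ and $\Var W \geq \EE[\Var(W\mid\mathcal{G})]$ then give $\Var W \gtrsim m/n^2 \asymp n^{-(d+3)/(d+1)}$.

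\emph{Main obstacle.} The delicate ingredient is the weighted Schütt-cap estimate: verifying that, on $B_i\cap\{N_i=1\}$, swapping the lone point between $C_i^-$ and $C_i^+$ shifts $W$ by at least $c_1/n$. In the unweighted Euclidean case this is a familiar cap-volume computation, but here one must additionally exploit the uniform continuity and two-sided boundedness of $\psi$ near $\bd K$ granted by property (c) of $\cW(K)$ to guarantee that the associated weighted cap still has $\Psi$-measure of order $1/n$; the uniformity of the Schütt geometry itself, in turn, relies on the strict positivity of the Gauss--Kronecker curvature afforded by $K\in\cK_+^2(\RR^d)$ via Lemma~\ref{lem:bound1}.
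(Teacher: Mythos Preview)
Your high-level strategy is the same as the paper's: Reitzner's conditioning scheme with $m\asymp n^{(d-1)/(d+1)}$ disjoint boundary caps, each contributing variance $\asymp n^{-2}$ with uniformly positive probability. The comparison of $\Phi,\Psi$ with $\Vol$ near $\bd K$ via the two-sided bounds from the definition of $\cW(K)$ is also what the paper uses.

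Where you diverge from the paper is in the local construction inside each cap, and this is where your sketch has a genuine gap. You place a \emph{single} point in $C_i$, empty a surrounding neighbourhood, and argue that swapping this lone point between $C_i^-$ and $C_i^+$ moves $W$ by $\asymp 1/n$. But with only one point in the cap and the neighbourhood cleared out, the facets of $K_\varphi(n)$ through $X_i^\star$ run off to sample points outside your local neighbourhood, whose positions you do not control on $B_i$. If those outside points happen to sit very close to $\bd K$, the region added by $X_i^\star\in C_i^+$ can be much smaller than $1/n$; if they sit deep, it can be much larger. Your event $B_i$ as stated does not pin down the link of $X_i^\star$, so the asserted $c_1/n$ oscillation of $x\mapsto\EE[W\mid\mathcal G,X_i^\star=x]$ is not established. (Your condition (b) as written is vacuous: the convex hull of the points lying in $K_\delta^\varphi$ is automatically contained in $K_\delta^\varphi$ and hence disjoint from $C_i$.) The ANOVA inequality you invoke is correct, but it only helps once each main-effect variance is bounded below, and that is exactly the step that is missing.

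The paper resolves this by planting a full $(d+1)$-point simplex in each cap: $d$ points $x_1,\dots,x_d$ near the base and one apex point $Y$, chosen so that the cone $\pos_{y_j}[Y,x_1,\dots,x_d]$ already contains the entire cross-section $K\cap H(n(y_j),y_j-h_m n(y_j))$. This cone condition is the crucial ingredient you are missing: it guarantees that the $d$ base points screen off the rest of the sample, so that the portion of $K_\varphi(n)$ above the base is exactly the simplex $[Y,x_1,\dots,x_d]$, independently of everything outside the cap. The variance contribution then reduces to $\Var_Y\Psi([Y,x_1,\dots,x_d])$, which the paper bounds below by an explicit calculation on the standard paraboloid (their Lemma~\ref{lem:AuxLowerBound}) transported by an affine map. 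This also delivers genuine independence across caps, so the paper gets an equality $\Var(\Psi(K_\varphi(n))\mid\cF)=\sum_j\mathbf 1_{A_j}\Var_{X_j}\Psi(K_\varphi(n))$ rather than your ANOVA inequality. If you want to salvage the one-point-per-cap route, you would need to enlarge $B_i$ to prescribe the positions of the link points as well; the paper's simplex device is the clean way to do exactly that.
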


In the following proof, $c,c',c_0,c_1$ etc.\ will denote positive and finite constants, which are independent from the parameter $n$.

\begin{proof}[Proof of Theorem \ref{thm:LowerVarianceBound}]
	\emph{Step 1 -- standard paraboloid.}
	Let $E$ be the standard paraboloid in $\RR^d$, that is,
	\begin{equation*}
		E = \{(x_1,\dotsc,x_d) \in \RR^d : x_1^2 + \dotsc + x_{d-1}^2 \leq x_d\}.
	\end{equation*}
	We chose a simplex $S_0$ in the cap
	\begin{equation*}
		C^{\tinyonehalf E}(o,1) = \left\{x\in \tfrac{1}{2}E: x_d\leq 1\right\}
	\end{equation*}
	in the following way.
	The base is a regular simplex $\Delta_{d-1}$ with vertices on the $(d-2)$-sphere $\bd(\frac{1}{2}E \cap H(e_d, s))$ and the apex is at the origin $o$, see Figure \ref{fig:proof_sketch}.
	Then the circumradius $R$ of $\Delta_{d-1}$ is equal to the radius of the $(d-1)$-dimensional ball $\frac{1}{2}E\cap H(e_d, s)$ and therefore $R=\sqrt{s/2}$. Thus the inradius of $\Delta_{d-1}$ is $r=(1/d)\sqrt{s/2}$. We choose $s$ such that the cone $\pos S_0$ generated by $S_0$ contains $2E\cap H(e_d,1)$. Because the inradius of that $(d-1)$-simplex, that is the intersection of $\pos S_0$ with $H(e_d,1)$, is $r/s = 1/(d\sqrt{2s})$ and $2E\cap H(e_d,1)$ is a $(d-1)$-dimensional ball of radius $1/\sqrt{2}$ we may choose $s=1/(32d^2)$.

	\begin{figure}[t]
        \centering
        \begin{tikzpicture}[scale=3]
            \def\offSet{0.1}
            \path[use as bounding box] (-2,-0.1) rectangle (2,1.6);
            \draw[domain={-1-\offSet}:{1+\offSet}, variable=\x, smooth, thick] plot (\x, {\x*\x})
                node[above] {$E$}; 
            \draw[domain={-1/sqrt(2)-\offSet}:{1/sqrt(2)+\offSet}, variable=\x, smooth, opacity=.5] plot (\x,{2*\x*\x})
                node[above] {$\frac{1}{2}E$}; 
            \draw[domain={-sqrt(2)-\offSet}:{sqrt(2)+\offSet}, variable=\x, smooth, opacity=.5] plot (\x,{0.5*\x*\x})
                node[above] {$2E$}; 

            \def\s{0.2}
            \fill[black, opacity=0.5] (0,0) -- ({-sqrt(0.5*\s)},\s) -- ({sqrt(0.5*\s)},\s) -- cycle;
            \draw[thick] (0,0) -- ({sqrt(0.5*\s)}, \s) -- ({-sqrt(0.5*\s)}, \s) -- cycle;
            \node[above] at (0,\s) {$S_0$};

            \draw[dashed, shorten >= -1cm] ({sqrt(0.5*\s)}, \s) -- ({sqrt(0.5/\s)},1)
                node[right] {$\pos S_0$};
            \draw[dashed, shorten >= -1cm] ({-sqrt(0.5*\s)}, \s) -- ({-sqrt(0.5/\s)},1);

            \draw[thick] ({-sqrt(2)},1) -- ({sqrt(2)},1) node[midway, above] {$2E\cap H(e_d,1)$};
        \end{tikzpicture}
        \caption{Sketch for the construction of the simplex $S_0$ in Step 1 of the proof of Theorem \ref{thm:LowerVarianceBound}.}
        \label{fig:proof_sketch}
	\end{figure}
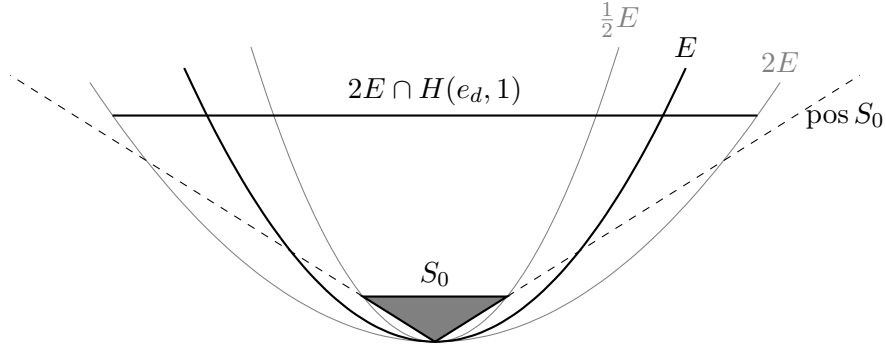

	By continuity there exists $\delta_0>0$ and closed sets $C^1,\dotsc,C^d \subset C^{\tinyonehalf E}(o,1)$ (e.g., suitable caps whose centers are the vertices of the base of $S_0$) such that
	\begin{equation*}
		\min\{\Vol(C^{\tinyonehalf E}(o,\delta_0)), \Vol(C^1), \dotsc, \Vol(C^d)\} =: c_0 >0,
	\end{equation*}
	and such that for all $Y\in C^{\tinyonehalf E}(o,\delta_0)$, $x_i\in C^i$, $i=1,\dotsc,d$, we have that the simplex $[Y,x_1,\dotsc,x_d]$ is `close' to $S_0$, in particular that
	\begin{equation*}
		\pos\, [Y,x_1,\dotsc,x_d] \supset 2E\cap H(e_d,1),
	\end{equation*}
	see also Figure \ref{fig:proof_sketch2}.
	Note that if $\bar{\varphi}:C^{{\scriptscriptstyle \frac{1}{2}} E}(o,1)\to (0,\infty)$ is an integrable function, then
	\begin{equation}\label{eqn:bound_phi_E}
		\min\{ \bar{\Phi}(C^{\tinyonehalf E}(o,\delta_0)), \bar{\Phi}(C^1), \dotsc, \bar{\Phi}(C^d)\} \geq c_0 \, \inf_{C^{\tinyonehalf E}(o,1)}\bar{\varphi}.
	\end{equation}

	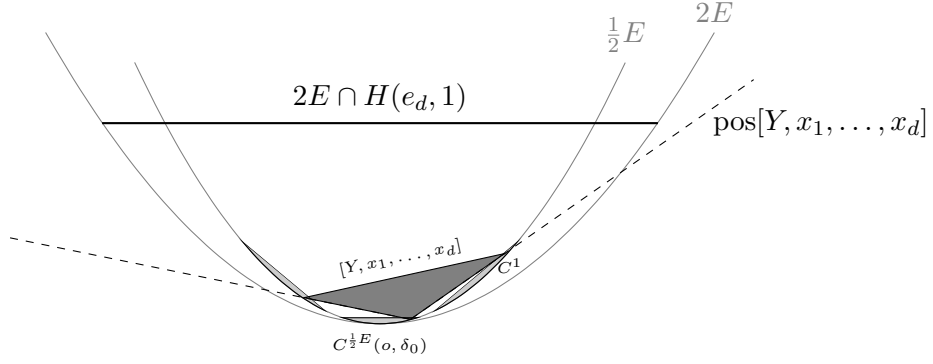
\begin{figure}[t]
        \centering
        \begin{tikzpicture}[xscale=4, yscale=2.66]
            \def\offSet{0.1}
            \path[use as bounding box] (-1.8,-0.1) rectangle (1.8,1.6);
            \draw[domain={-1/sqrt(2)-\offSet}:{1/sqrt(2)+\offSet}, variable=\x, smooth, opacity=.5] plot (\x,{2*\x*\x})
                node[above] {$\frac{1}{2}E$}; 
            \def\s{0.2} 
            \def\h{0.015} 
            \def\t{0.03} 
            \def\r{0.3} 

            \begin{scope}
                \clip ({-\r},{-\t}) rectangle (\r,\t);
                \fill[domain={-1/sqrt(2)-\offSet}:{1/sqrt(2)+\offSet}, variable=\x, smooth, opacity=.2] plot (\x,{2*\x*\x}) --cycle;
                \draw[domain={-1/sqrt(2)-\offSet}:{1/sqrt(2)+\offSet}, variable=\x, smooth] plot (\x,{2*\x*\x}) --cycle;
                \begin{scope}
                    \clip[domain={-1/sqrt(2)-\offSet}:{1/sqrt(2)+\offSet}, variable=\x, smooth] plot (\x,{2*\x*\x}) --cycle;
                    \draw ({-\r},\t) -- (\r,\t);
                \end{scope}
            \end{scope}

            \begin{scope}
                \clip ({sqrt(0.5*\s)},\s) ++ ({-\h*2*sqrt(2*\s)},\h) ++ (\r,{\r*2*sqrt(2*\s)})
                --++ ({-2*\r},{-\r*4*sqrt(2*\s)})
                --++ ({\h*4*sqrt(2*\s)},{-2*\h})
                --++ ({2*\r},{\r*4*sqrt(2*\s)}) -- cycle;
                \fill[domain={-1/sqrt(2)-\offSet}:{1/sqrt(2)+\offSet}, variable=\x, smooth, opacity=.2] plot (\x,{2*\x*\x}) --cycle;
                \draw[domain={-1/sqrt(2)-\offSet}:{1/sqrt(2)+\offSet}, variable=\x, smooth] plot (\x,{2*\x*\x}) --cycle;
                \begin{scope}
                    \clip[domain={-1/sqrt(2)-\offSet}:{1/sqrt(2)+\offSet}, variable=\x, smooth] plot (\x,{2*\x*\x}) --cycle;
                    \draw ({sqrt(0.5*\s)},\s) ++ ({-\h*2*sqrt(2*\s)},\h) ++ (\r,{\r*2*sqrt(2*\s)}) --++ ({-2*\r},{-\r*4*sqrt(2*\s)});
                \end{scope}
            \end{scope}

            \begin{scope}
                \clip ({-sqrt(0.5*\s)},\s) ++ ({\h*2*sqrt(2*\s)},\h) ++ (\r,{-\r*2*sqrt(2*\s)})
                --++ ({-2*\r},{\r*4*sqrt(2*\s)})
                --++ ({-\h*4*sqrt(2*\s)},{-2*\h})
                --++ ({2*\r},{-\r*4*sqrt(2*\s)}) -- cycle;
                \fill[domain={-1/sqrt(2)-\offSet}:{1/sqrt(2)+\offSet}, variable=\x, smooth, opacity=.2] plot (\x,{2*\x*\x}) --cycle;
                \draw[domain={-1/sqrt(2)-\offSet}:{1/sqrt(2)+\offSet}, variable=\x, smooth] plot (\x,{2*\x*\x}) --cycle;
                \begin{scope}
                    \clip[domain={-1/sqrt(2)-\offSet}:{1/sqrt(2)+\offSet}, variable=\x, smooth] plot (\x,{2*\x*\x}) --cycle;
                    \draw ({-sqrt(0.5*\s)},\s) ++ ({\h*2*sqrt(2*\s)},\h) ++ (\r,{-\r*2*sqrt(2*\s)}) --++ ({-2*\r},{\r*4*sqrt(2*\s)});
                \end{scope}
            \end{scope}

            \coordinate (A) at (0.1,0.022);
            \coordinate (B) at (0.41, 0.35);
            \coordinate (C) at (-0.25,0.13);

            \fill[opacity=0.5] (A) -- (B) -- (C) --cycle;
            \draw (A) -- (B) -- (C) node[midway,above, rotate=12]{\tiny $[Y,x_1,\dotsc,x_d]$} --cycle;
            \draw[dashed, shorten >= -4cm] (A) -- (B);
            \draw[dashed, shorten >= -4cm] (A) -- (C);

            \node[below] at (0,0) {\tiny $C^{\tinyonehalf E}(o,\delta_0)$};
            \node at (1.45,1) {$\pos [Y,x_1,\dotsc,x_d]$};

            \node[above right, xshift = 0.14cm] at ({sqrt(0.5*\s)},\s) {\tiny $C^1$};

            \def\offSetH{0.5}
            \draw[domain={-1-\offSet}:{1+\offSet}, variable=\x, smooth, opacity=0.5] plot (\x,{1.2*\x*\x})
                node[above] {$2E$}; 
            \draw[thick] ({-sqrt(2)+\offSetH},1) -- ({sqrt(2)-\offSetH},1) node[midway, above] {$2E\cap H(e_d,1)$};
        \end{tikzpicture}
        \caption{Sketch for the construction of the caps in Step 1 of the proof of Theorem \ref{thm:LowerVarianceBound}.}
        \label{fig:proof_sketch2}
	\end{figure}

	Next, we need the following lemma.
	\begin{lemma}\label{lem:AuxLowerBound}
	Let $\bar{\psi}:C^{\tinyonehalf E}(o,1)\to (0,\infty)$ be a continuous function that is bounded below and let $\vartheta$ be a continuous probability density on $C^{\tinyonehalf E}(o,\delta_0)$.  Then there is $c>0$ such that for all $x_i\in C^i$, $i=1,\dotsc,d$, we have
	\begin{equation}\label{eqn:bound_V_E}
		\Var_Y \bar\Psi([Y,x_1,\dotsc,x_d]) \geq c \, \inf_{C^{\tinyonehalf E}(o,\delta_0)} \vartheta,
	\end{equation}
	where $Y$ is distributed with respect to $\vartheta$.
	\end{lemma}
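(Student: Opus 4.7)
\emph{Strategy.} Put $G(Y):=\bar\Psi([Y,x_1,\dotsc,x_d])$ and $D:=C^{\tinyonehalf E}(o,\delta_0)$. Since $c\mapsto\int_D(G-c)^2\dint Y$ is minimized at $c=\bar G:=\Vol(D)^{-1}\int_D G\,\dint Y$, and since $\vartheta\geq\inf_D\vartheta$ on $D$,
\begin{equation*}
    \Var_Y G(Y)=\int_D(G-\EE_\vartheta G)^2\vartheta\,\dint Y\;\geq\;\inf_D\vartheta\cdot\int_D(G-\bar G)^2\dint Y\,.
\end{equation*}
It therefore suffices to show that the Lebesgue $L^2$-variance $\int_D(G-\bar G)^2\dint Y$ is bounded below by a strictly positive constant which is uniform in $(x_1,\dotsc,x_d)\in C^1\times\dotsc\times C^d$; this will be achieved via an explicit two-apex comparison.

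\emph{Monotonicity at two special apexes.} Take $Y_1^\star:=o$ and $Y_2^\star:=\tinyonehalf\delta_0\,e_d$; both lie in $D$ whenever $\delta_0<s$, which we may assume. For $x_i=v_i$, the vertices of the base of $S_0$, symmetry together with $\sum_{i=1}^d v_i=d(0,\dotsc,0,s)$ shows that $\sum\lambda_i v_i=Y_2^\star$ admits the symmetric solution $\lambda_i=\delta_0/(2ds)$ with $\sum\lambda_i=\delta_0/(2s)<1$. By continuity of barycentric coordinates, the strict inequalities $\lambda_i>0$ and $\sum\lambda_i<1$ persist on a full neighbourhood of $(v_1,\dotsc,v_d)$, so by choosing the caps $C^i$ narrowly enough around the $v_i$ from the outset (which keeps the positive-measure bound \eqref{eqn:bound_phi_E} in force with a possibly smaller $c_0$) we may assume $Y_2^\star\in\conv(o,x_1,\dotsc,x_d)$ for every $(x_1,\dotsc,x_d)\in\prod_{i=1}^d C^i$. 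Writing $T(Y):=[Y,x_1,\dotsc,x_d]$, convexity then yields the nesting $T(Y_2^\star)\subset T(Y_1^\star)$. Combined with the affine formula $\Vol(T(Y))=\tfrac{1}{d}\Vol_{d-1}([x_1,\dotsc,x_d])\,h(Y)$, where $h(Y)$ denotes the distance from $Y$ to $\mathrm{aff}(x_1,\dotsc,x_d)$, and with the fact that both $\Vol_{d-1}([x_1,\dotsc,x_d])$ and the $e_d$-component of the unit normal to that affine hull stay bounded below on the compact set $\prod C^i$, this gives $\Vol(T(Y_1^\star)\setminus T(Y_2^\star))\geq c_1$ for some $c_1>0$ independent of $x$. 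Using $\bar\psi\geq\inf\bar\psi>0$,
\begin{equation*}
    G(Y_1^\star)-G(Y_2^\star)=\int_{T(Y_1^\star)\setminus T(Y_2^\star)}\bar\psi(z)\dint z\;\geq\;\inf\bar\psi\cdot c_1\;=:\eta>0
\end{equation*}
uniformly in $(x_1,\dotsc,x_d)\in\prod C^i$.

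\emph{Neighbourhoods and conclusion.} Since $T(Y)\subset C^{\tinyonehalf E}(o,1)$ by convexity and $\bar\psi$ is uniformly continuous there, the map $(Y,x_1,\dotsc,x_d)\mapsto G(Y;x_1,\dotsc,x_d)$ is uniformly continuous on the compact set $D\times\prod C^i$. Therefore one can pick open neighbourhoods $U_1\ni Y_1^\star$ and $U_2\ni Y_2^\star$ in $D$, with $\Vol(U_1),\Vol(U_2)\geq v_0>0$ independent of $x$, such that $G(Y;x)-G(Y';x)\geq\eta/2$ for every $Y\in U_1$, $Y'\in U_2$ and every $x\in\prod C^i$. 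From the identity $\int_D(G-\bar G)^2\dint Y=\tfrac{1}{2\Vol(D)}\int_D\!\int_D(G(Y)-G(Y'))^2\dint Y\dint Y'$ and restricting the double integral to $U_1\times U_2$ we obtain $\int_D(G-\bar G)^2\dint Y\geq(\eta/2)^2 v_0^2/(2\Vol(D))=:c$, a strictly positive constant independent of $x$ and $\vartheta$. Substituted into the first paragraph, this yields the desired bound $\Var_Y\bar\Psi([Y,x_1,\dotsc,x_d])\geq c\cdot\inf_D\vartheta$. The technical heart of the argument is the monotonicity step: securing $Y_2^\star\in T(Y_1^\star)$ uniformly in $x$ requires the caps $C^i$ to be chosen sufficiently narrowly around the $v_i$, after which the remainder is a soft compactness/continuity argument.
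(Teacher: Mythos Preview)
Your proof is correct and takes a genuinely different route from the paper's. The paper argues directly that the map $(y,x)\mapsto\bar\Psi([y,x_1,\dotsc,x_d])$ is continuous and non-constant in $y$, and then asserts the existence of a single point $z$ and radius $\delta$ such that $\min_{z'\in B(z,\delta)\cap D}\min_{x_i\in C^i}\bigl(\bar\Psi([z',x_1,\dotsc,x_d])-\EE_Y\bar\Psi([Y,x_1,\dotsc,x_d])\bigr)^2>0$; the variance bound follows by integrating over this ball. Your approach instead first strips off $\vartheta$ via the clean inequality $\Var_\vartheta G\geq(\inf_D\vartheta)\int_D(G-\bar G)^2$, then lower-bounds the Lebesgue variance by an explicit two-apex comparison $G(o;x)-G(\tfrac{\delta_0}{2}e_d;x)\geq\eta$ and the double-integral identity. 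This buys you something the paper's argument leaves implicit: your constant $c$ is \emph{manifestly} independent of $\vartheta$, because after the first line $\vartheta$ has disappeared. That uniformity is exactly what is used in Step~2, where the lemma is applied with densities $\tilde\vartheta$ that vary with the height parameter $h$; in the paper's version the point $z$ and the constant $c'$ ostensibly depend on $\vartheta$ through $\EE_Y$, so the independence has to be read between the lines. The one mild cost of your route is that you need to shrink the caps $C^i$ (and take $\delta_0<s$) to secure $Y_2^\star\in[o,x_1,\dotsc,x_d]$ uniformly; since the $C^i$ and $\delta_0$ are free parameters of Step~1, this is harmless for the overall argument, but strictly speaking you are adding a hypothesis to the lemma as stated. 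If you wanted to avoid that, you could replace the explicit two-point step by the softer observation that $x\mapsto\int_D(G(\cdot;x)-\bar G(x))^2$ is continuous and strictly positive on the compact product $\prod C^i$, hence bounded below; this works for the $C^i$ as given.
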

	\begin{proof}[Proof of Lemma \ref{lem:AuxLowerBound}]
	Note that the functional $(y,x_1,\dotsc,x_d) \mapsto \bar\Psi([y,x_1,\dotsc,x_d])$ is continuous, strictly positive and not constant on $C^{\tinyonehalf E}(o,\delta_0)\times C^1\times\ldots\times C^d$. Since $\bar{\Psi}$ is continuous and $C^1,\ldots,C^d$ are compact there exists $z\in C^{\tinyonehalf E}(o,\delta_0)$ and $\delta>0$ such that
	\begin{align*}
		c':= \min_{z'\in B_2^d(z,\delta)\,\cap\, C^{\tinyonehalf E}(o,\delta_0)} \min_{x_i\in C^i} \left(\bar\Psi([z',x_1,\dotsc,x_d]) - \EE_Y \bar\Psi([Y,x_1,\dotsc,x_d]) \right)^2 > 0,
	\end{align*}
	where $B_2^d(z,\delta)$ is the Euclidean ball around $z$ with radius $\delta$.
	Thus,
	\begin{align*}
		\Var_Y \bar\Psi([Y,x_1,\dotsc,x_d])
		&= \int_{C^{\tinyonehalf E}(o,\delta_0)} (\bar\Psi([Y,x_1,\dotsc,x_d])- \EE_Y\bar\Psi([Y,x_1,\dotsc,x_d]))^2 \, \dint Y \\
		&\geq \int_{B_2^d(z,\delta)\,\cap\, C^{\tinyonehalf E}(o,\delta_0)} (\bar\Psi([Y,x_1,\dotsc,x_d])- \EE_Y\bar\Psi([Y,x_1,\dotsc,x_d]))^2\, \dint Y\\
		&\geq c'\, \Vol(B_2^d(z,\delta)\cap C^{\tinyonehalf E}(o,\delta_0)) \,\inf_{C^{\tinyonehalf E}(o,\delta_0)} \vartheta.
	\end{align*}
    This proves the lemma.
	\end{proof}

	We can now continue the proof of Theorem \ref{thm:LowerVarianceBound}.

	\bigskip

	\emph{Step 2 -- elliptic paraboloid of height $h$.}
	Let $\kappa_1,\dotsc,\kappa_{d-1}>0$, and set $Q$ to be the elliptic paraboloid
	\begin{equation*}
		Q := \left\{(x_1,\dotsc,x_d)\in\RR^d : x_d \geq \tfrac{1}{2} \sum_{i=1}^{d-1} \kappa_j x_j^2\right\},
	\end{equation*}
	and put $\kappa=\prod_{i=1}^{d-1} \kappa_j$.
	The linear map $\alpha$ defined by
	\begin{equation*}
		\alpha(e_i) = \sqrt{\frac{2h}{\kappa_i}}\  e_i\quad (i\in\{1,\ldots,d-1\})\qquad\text{and}\qquad \alpha(e_d) = he_d,
	\end{equation*}
	maps $C^{\tinyonehalf E}(o,\delta_0)$ to $C^{\tinyonehalf Q}(o,h\delta_0)$ and satisfies
	\begin{equation*}
		\det \alpha = 2^{\frac{d-1}{2}} \kappa ^{-\frac{1}{2}} h^{\frac{d+1}{2}} \sim h^{\frac{d+1}{2}},\qquad \text{as $h\to 0^+$}.
	\end{equation*}
	Notice that if $\varphi$ and $\psi$ are a integrable functions on $C^{\tinyonehalf Q}(o,h\delta_0)$
	that are bounded from below by some constant then so are $\bar{\varphi}:=\varphi\circ\alpha$ and
	$\bar{\psi}:=\psi\circ \alpha$, and for any Borel set $B\subset C^{\tinyonehalf E}(o,\delta_0)$ we have that
	\begin{equation}\label{eqn:alpha_transf}
		\Phi(\alpha(B)) = (\det \alpha)\int_{B} (\varphi\circ \alpha)(x)\,\dint x = (\det \alpha) \bar{\Phi}(B)
		\qquad\text{and}\qquad
		\Psi(\alpha(B)) = (\det \alpha) \bar{\Psi}(B).
	\end{equation}
	Applying $\alpha$ to $C^1,\dotsc,C^d$, we obtain closed sets $D^i=\alpha(C_i)$ for $i\in\{1,\ldots,d\}$ satisfying
	\begin{align*}
		\min\{\Phi(C^{\tinyonehalf Q}(o,h\delta_0)),\Phi(D^1), \dotsc, \Phi(D^d)\}
		&= (\det\alpha)\min\{\bar\Phi(C^{\tinyonehalf E}(o,\delta_0)),\bar\Phi(C^1), \dotsc, \bar\Phi(C^d)\} \\
		&\geq c_0 \,(\det \alpha) \,\inf_{C^{\tinyonehalf E}(o,1)} \bar{\varphi}\\
		&= c_0 \, (\det \alpha) \,\inf_{C^{\tinyonehalf Q}(o,h)} \varphi.
	\end{align*}
	Now observe that, by \eqref{eqn:bound_phi_E}, the last expression behaves like $h^{\frac{d+1}{2}}$, as $h\to 0^+$.
	Moreover, for all $y\in C^{\tinyonehalf Q}(o,h\delta_0)$ and $x_i\in D^i$, we have that
	\begin{equation*}
		\operatorname{pos}\, [y,x_1,\dotsc,x_d] \supset 2Q\cap H(e_d,h).
	\end{equation*}
 	Let $\varphi$ be bounded from below and consider the probability density $\vartheta$ that is the restriction of $\varphi$ to $C^{\tinyonehalf Q}(o,h\delta_0)$, i.e.,
 	\begin{equation*}
 		\vartheta(x) = \frac{\mathbf{1}_{C^{\tinyonehalf Q}(o,h\delta_0)}(x)}{\Phi(C^{\tinyonehalf Q}(o,h\delta_0))} \varphi(x).
 	\end{equation*}
	Let $Y$ be a random point distributed according to a probability density $\vartheta$ on $C^{\tinyonehalf Q}(o,h\delta_0)$ and let $x_i\in D^i$ for each $i\in\{1,\ldots,d\}$.
	Then $\bar{Y}:=\alpha^{-1}(Y)$ is a random point in $C^{\tinyonehalf E}(o,\delta_0)$ that has the density $\tilde{\vartheta} := (\det \alpha)\, \vartheta\circ\alpha$.
	We set $\bar{x}_i:=\alpha^{-1}(x_i)$ for $i\in\{1,\ldots,d\}$. Then,
	\begin{equation*}
		\Var_Y \Psi([Y,x_1,\dotsc,x_d])
		= \Var_{\bar{Y}} \Psi(\alpha[\bar{Y},\bar{x}_1,\dotsc,\bar{x}_d])
		=(\det \alpha)^2 \Var_{\bar{Y}} \bar{\Psi}([\bar{Y},\bar{x}_1,\dotsc,\bar{x}_d]),
	\end{equation*}
	where we used \eqref{eqn:alpha_transf} in the last step.
	By \eqref{eqn:bound_V_E}, this yields
	\begin{align*}
		\Var_Y \Psi([Y,x_1,\dotsc,x_d]) &\geq c \Big(\inf_{C^{\tinyonehalf E}(o,\delta_0)} \!\!\tilde{\vartheta}\Big) (\det \alpha)^2
		= c  \Big(\inf_{C^{\tinyonehalf Q}(o,h\delta_0)}\!\! \vartheta\Big) (\det \alpha)^3\\
		&= \frac{c}{\Phi(C^{\tinyonehalf Q}(o,h\delta_0))}  \Big(\inf_{C^{\tinyonehalf Q}(o,h\delta_0)}\!\! \varphi\Big) (\det \alpha)^3\\
		&\geq \frac{c}{\Vol(C^{\tinyonehalf E}(o,\delta_0))}
            \Bigg(\inf\limits_{C^{\tinyonehalf Q}(o,h\delta_0)}\!\! \varphi \Bigg/\!\!\! \sup\limits_{C^{\tinyonehalf Q}(o,h\delta_0)} \!\!\varphi \Bigg) (\det\alpha)^2,
	\end{align*}
	and the last term behaves like $h^{d+1}$, as $h\to 0^+$.

	\bigskip

	\emph{Step 3 -- economically cover $K$ with caps and approximate by elliptic paraboloids.}
	By our assumptions on $K$ the Gauss--Kronecker curvature exists in all boundary points and is bounded from below by a positive constant.
	Now set
	\begin{equation*}
		m := \lfloor n^{\frac{d-1}{d+1}}\rfloor,
	\end{equation*}
	i.e., $m$ is the largest integer less than or equal to $n^{\frac{d-1}{d+1}}$. Also recall the economic cap covering theorem, see, e.g., \cite[Lemma 6]{ReitznerCLT2005}. It says that we can find points $y_1,\dotsc,y_m\in \bd K$ and $h_m\geq 0$ such that 
	the caps
	\begin{align*}
		C^K(y_j,h_m) := \{x\in K: (x-y_j) \cdot n_{y_j} \geq -h_m\},
	\end{align*}
	where $n_{y_j}$ is the outer unit normal of $\bd K$ at $y_j$, are pairwise disjoint and satisfy
	\begin{align}
		h_m \sim m^{-\frac{2}{d-1}} &\sim n^{-\frac{2}{d+1}},\\ 
		\Phi(C^K(y_j,h_m)) \leq \Big(\sup_{C^{K}(y_j,h_m)} \varphi\Big) \Vol(C^K(y_j,h_m)) \sim m^{-\frac{d+1}{d-1}} &\sim n^{-1},\label{eqn:upper_bound_C_j}
	\end{align}
	as $n\to\infty$.
	For every cap $C^K(y_j,h_m)$ we consider an approximation of $\bd K$ at $y_j$ with an elliptic paraboloid $Q(y_j)$. Then
	\begin{equation*}
		C^{\tinyonehalf Q(y_j)}(y_j,h_m \delta_0)\subset C^{K}(y_j,h_m\delta_0).
	\end{equation*}
	Furthermore, there are closed sets $D^i(y_j)\subset C^K(y_j,h_m)$, $i\in\{1,\dotsc,d\}$, such that
	\begin{equation}\label{eqn:lower_bound_phi}
		\min\{\Phi(C^{\tinyonehalf Q(y_j)}(y_j,h_m\delta_0)), \Phi(D^1(y_j)), \dotsc,\Phi(D^d(y_j))\} \gtrsim h_m^{\frac{d+1}{2}}\sim n^{-1},
	\end{equation}
	as $n\to\infty$, and such that for all $x_i\in D^i(y_j)$ and a random $Y\in C^{\tinyonehalf Q(y_j)}(y_j,h_m\delta_0)$ distributed according to $\Phi$ relative to $C^{\tinyonehalf Q(y_j)}(y_j,h_m\delta_0)$ we have
	\begin{align}\label{eqn:circular_cone_inside}
		\begin{split}
		\operatorname{pos}_{y_j} [Y,x_1,\dotsc,x_d] &\supset 2Q(y_j)\cap H(n(y_j),y_j-h_m n(y_j)) \\
		&\supset K\cap H(n(y_j),y_j-h_m n(y_j)),
		\end{split}
	\end{align}
	and
	\begin{equation}\label{eqn:var_bound}
		\Var_Y \Psi([Y,x_1,\dotsc,x_d]) \gtrsim h_m^{d+1} \sim n^{-2},
	\end{equation}
	where $\operatorname{pos}_{y_j} [Y,x_1,\dotsc,x_d]=\pos([Y,x_1,\dotsc,x_d]-y_j)+y_j$ stands for the cone with apex $y_j$ spanned by $Y,x_1,\ldots,x_d$ and $n(y_j)$ denotes the outer unit normal vector of $\bd K$ at the point $y_j$.

	\bigskip

	\emph{Step 4 -- estimate the variance by considering special events.}
	For $j\in\{1,\dotsc,n\}$ let $A_j$ be the event that exactly one of the random points $X_1,\ldots,X_n$ is contained in $C^{\tinyonehalf Q(y_j)}(y_j, h_m\delta_0)$ and in each set $D^i(y_j)$, and that no other point is in $C_j=C^K(y_j,h_m\delta_0)$. By \eqref{eqn:upper_bound_C_j} we can find constants $c_1>0$ and $N\in\NN$ such that for all $n\geq N$ we have
	\begin{equation*}
		\Phi(C_j) \leq \frac{c_1}{n}.
	\end{equation*}
	Then, by \eqref{eqn:lower_bound_phi}, we have
	\begin{align*}
		\PP(A_j) &= \binom{n}{d+1} \left(\prod_{i=1}^d\PP(X_i\in D^i(y_j))\right) \PP(X_{d+1} \in C^{\tinyonehalf Q(y_j)}(y_j,h_m\delta_0)) \prod_{\ell=d+2}^m \PP(X_\ell\not\in C_j)\\
		&=\binom{n}{d+1} \Phi(C^{\tinyonehalf Q(y_j)}(y_j,h_m\delta_0)) \prod_{i=1}^d \Phi(D^i(y_j)) (1-\Phi(C_j))^{n-d-1}\\
		&\gtrsim n^{d+1} n^{-d-1} \left(1-\frac{c_1}{n}\right)^{n-d-1}.
	\end{align*}
	Thus there exists $c_2>0$ such that $\PP(A_j)\geq c_2$ for all $n\geq N$ and therefore
	\begin{equation}\label{eqn:sum_A_j}
		\EE \sum_{j=1}^m {\mathbf{1}_{A_j}} = \sum_{j=1}^m \PP(A_j) \geq c_2 m \sim n^{\frac{d-1}{d+1}},\qquad \text{as $n\to \infty$}.
	\end{equation}
	Denote by $\cF$ the $\sigma$-field generated by all random points $X_1,\dotsc,X_n$ except those which
	are contained in caps $C^{\tinyonehalf Q(y_j)}(y_j, h_m\delta_0)$ with $\mathbf{1}_{A_j}=1$.
	Then, by the conditional variance formula,
	\begin{equation*}
		\Var \Psi(K_{\varphi}(n)) = \EE \Var(\Psi(K_{\varphi}(n))|\cF) + \Var \EE(\Psi(K_{\varphi}(n))|\cF)\geq \EE\Var(\Psi(K_{\varphi}(n))|\cF).
	\end{equation*}
	Assume that $\mathbf{1}_{A_j}=\mathbf{1}_{A_k}=1$ for some $j,k\in\{1,\dotsc,m\}$ and further without loss of generality that $X_j$, respectively $X_k$, is the unique point in $C^{\tinyonehalf Q(y_j)}(y_j,h_m\delta_0)$, respectively $C^{\frac{1}{2}Q(y_k)}(y_k,h_m\delta_0)$. By construction the points $X_j$ and $X_k$ are vertices of $K_{\varphi}(n)$, and by
	\eqref{eqn:circular_cone_inside} there is no edge between $X_j$ and $X_k$. Hence the change of $\Psi$-measure of $K_{\varphi}(n)$ if $X_j$ is moved is independent of the change of measure if $X_k$ is moved. This independence yields
	\begin{equation*}
		\Var \Psi(K_{\varphi}(n)|\cF) = \sum_{j=1}^m \mathbf{1}_{A_j} \Var_{X_j} \Psi(K_{\varphi}(n)),
	\end{equation*}
	where the variance under the sum is taken just with respect to the random variable $X_j\in C^{\tinyonehalf Q(y_j)}(y_j,h_m\delta_0)$.
	Combining this with \eqref{eqn:var_bound} and \eqref{eqn:sum_A_j}, implies
	\begin{equation*}
		\Var \Psi(K_{\varphi}(n))
		\gtrsim \frac{1}{n^2} \sum_{j=1}^m \PP(A_j)
		\gtrsim n^{-2+\frac{d-1}{d+1}} = n^{-\frac{d+3}{d+1}},
	\end{equation*}
	as $n\to \infty$. The proof is thus complete.
\end{proof}

\begin{remark}
    We remark that the general idea developed in \cite{ReitznerCLT2005} to prove lower variance bounds for functionals of random polytopes,
    and which is also in the background of the proof of Theorem \ref{thm:LowerVarianceBound},
    has repeatedly and successfully been applied in the literature for various random polytope models.
    We refer the reader to the papers \cite{BaranyFodorWigh, BaranyReitznerVariance, BaranyThaele, BoroczkyFodorReitznerVigh, RichardsonVuWu, TurchiWespi}.
\end{remark}

\subsection{Bounding the first-order difference operators}

In this section we start the actual proof of Theorem \ref{thm:WeightedVolumeEuclidean} by dealing with moments of the first-order difference operator. The proof will be completed in the next section after having dealt with second-order difference operators as well. The next lemma is the main result of this section and we emphasize that the choice for $p$ there is motivated by the application below, where precisely the moments of these orders show up. As above, $c,c_1,c_2$ etc.\ will denote positive and finite constants, which are independent from the parameter $n$.

\begin{lemma}\label{lem:FirstOrderDifferenceOperators}
There exist constants $C,N\in(0,\infty)$ independent from $n$ such that
\begin{align*}
\EE[|D_1f(X)|^p] \leq C^p\Big(\frac{\ln n}{n}\Big)^p
\end{align*}
for all $p\in\{1,2,3,4\}$ and $n\geq N$.
\end{lemma}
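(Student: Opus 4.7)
The plan is to show that $|D_1 f(X)|$ is bounded by a constant multiple of $\ln n / n$ on a high-probability event, and to absorb the polynomially small complementary event into a negligible term via the crude deterministic bound $|D_1 f(X)| \leq \Psi(K) = 1$. Write $f(x_1,\ldots,x_k) := \Psi([x_1,\ldots,x_k])$, so that
$$D_1 f(X) = \Psi(K_\varphi(n)) - \Psi([X_2,\ldots,X_n]) \in [0,1];$$
put $K_\varphi^{(1)} := [X_2,\ldots,X_n]$ and note that $D_1 f(X) = 0$ as soon as $X_1 \in K_\varphi^{(1)}$.

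The first step is to apply Lemma \ref{lem:VanVu} to the $(n-1)$-sample $X_2,\ldots,X_n$ with $\beta := p+1$. For a suitable $c_0 > 0$ and $\delta_n := c_0 \ln n / n$, this gives an event $E_n := \{K_{\delta_n}^\varphi \subset K_\varphi^{(1)}\}$ of probability at least $1 - n^{-(p+1)}$ for all $n$ sufficiently large. Lemma \ref{lem:ComparisonWeightedVSunweighted} then furnishes a constant $c_1 > 0$ such that $K_{\delta_n}^\varphi \supset K_{c_1 \delta_n}$, where $K_{c_1 \delta_n}$ is the classical, unweighted floating body; hence on $E_n$ one has the inclusion $K_\varphi^{(1)} \supset K_{c_1 \delta_n}$.

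The second and decisive step is the pointwise estimate $|D_1 f(X)| \leq \Psi(\Delta(X_1, c_1 \delta_n))$ on $E_n$. Any point $y \in [X_1,\ldots,X_n] \setminus K_\varphi^{(1)}$ lies on a segment $[X_1, z]$ with $z \in K_\varphi^{(1)}$, and by convexity of $K_\varphi^{(1)}$ this segment first enters $K_\varphi^{(1)}$ at a parameter strictly beyond $y$, so the sub-segment $[X_1, y]$ avoids $K_\varphi^{(1)} \supset K_{c_1 \delta_n}$, which places $y$ in $\Delta(X_1, c_1 \delta_n)$. We may assume $X_1 \in K \setminus K_{c_1 \delta_n}$ (otherwise $D_1 f(X) = 0$), and then the definition of the classical floating body furnishes a boundary point $z_0 \in \bd K$ with $X_1 \in \Delta(z_0, c_1 \delta_n)$. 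Corollary \ref{cor:est_vis_region} now bounds $\Vol(\Delta(X_1, c_1 \delta_n)) \leq c_2 \delta_n$, and since $\Delta(X_1, c_1 \delta_n) \subset K \setminus K_{c_1 \delta_n}$ sits in the boundary layer where $\psi \leq C_\psi$ (property (c) of $\psi \in \cW(K)$, valid once $n$ is large enough), this upgrades to $\Psi(\Delta(X_1, c_1 \delta_n)) \leq C_1 \delta_n$ for some constant $C_1$.

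Combining the two regimes yields
$$\EE[|D_1 f(X)|^p] \leq (C_1 \delta_n)^p + \PP(E_n^c) \leq \left(C_1 c_0 \tfrac{\ln n}{n}\right)^{p} + n^{-(p+1)} \leq C^p\Big(\tfrac{\ln n}{n}\Big)^p$$
for $n \geq N$, with $C$ chosen slightly larger than $C_1 c_0$ to absorb the $n^{-(p+1)}$ term. The principal obstacle is the second step above: confining the increment $|D_1 f(X)|$ to the visibility region of the single inserted point $X_1$. The brute-force bound $|D_1 f(X)| \leq \Psi(K \setminus K_\varphi^{(1)})$ combined with Lemma \ref{lem:VolumeWetPart} would only give a scaling of order $(\ln n / n)^{2/(d+1)}$, which is too weak in high dimensions to make the Chatterjee--Lachi\`eze-Rey--Peccati bound of Lemma \ref{lem:CLTLachiezeReyPeccati} useful. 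Replacing the wet part by the much smaller $\Delta(X_1, c_1 \delta_n)$ and invoking the sharp estimate of Corollary \ref{cor:est_vis_region} is precisely what produces the linear $(\ln n / n)^p$ scaling claimed in the lemma.
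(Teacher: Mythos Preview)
Your proof is correct and follows essentially the same route as the paper: condition on the high-probability event that the weighted floating body sits inside $[X_2,\ldots,X_n]$ (Lemma~\ref{lem:VanVu}), pass to the unweighted floating body via Lemma~\ref{lem:ComparisonWeightedVSunweighted}, confine the increment $D_1 f(X)$ to the visibility region $\Delta(X_1,c\delta_n)$, and bound its volume using Corollary~\ref{cor:est_vis_region}. Your reduction from an interior point $X_1 \in K\setminus K_{c_1\delta_n}$ to a boundary point $z_0\in\bd K$ (so that Corollary~\ref{cor:est_vis_region}, stated only for $z\in\bd K$, applies via the double-union bound) is in fact more carefully spelled out than in the paper, which passes over this step rather quickly.

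One remark worth making: the paper's proof actually establishes the sharper estimate
\[
\EE|D_1 f(X)|^p \lesssim \Big(\frac{\ln n}{n}\Big)^{p+\frac{2}{d+1}},
\]
by retaining the indicator $\mathbf{1}\{X_1 \notin K_\delta^\varphi\}$ and using $\PP(X_1 \notin K_\delta^\varphi)=\Phi(K\setminus K_\delta^\varphi)\lesssim (\ln n/n)^{2/(d+1)}$ via Lemma~\ref{lem:VolumeWetPart}. This stronger bound---not the one in the lemma's statement---is what is actually plugged into Corollary~\ref{cor:Gamma3+4}; without the extra factor $(\ln n/n)^{2/(d+1)}$ the resulting exponents on $n$ there are $\frac{5-d}{2(d+1)}$ and $\frac{3-d}{2(d+1)}$, which are nonnegative for $d\le 5$ and $d\le 3$ respectively, so the Wasserstein bound would not decay. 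Your argument already isolates the indicator (``We may assume $X_1\in K\setminus K_{c_1\delta_n}$''), so the upgrade is a one-line addition: multiply by $\PP(X_1\in K\setminus K_{c_1\delta_n})$ before combining the two regimes.
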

\begin{proof}
Recall the definition of the weighted random polytope $K_\varphi(n)=[X_1,\ldots,X_n]$ from \eqref{eq:DefRandomPolytopes}.
Also, taking $\beta=7$ we denote by $c=c_\beta\in(0,\infty)$ the constant implied by Lemma \ref{lem:VanVu}.
Then, we let $A$ be the event that the weighted floating body $K_\delta^\varphi$ with $\delta=c\frac{\ln n}{n}$ is contained in $[X_2,\ldots,X_n]$, i.e.,
\begin{equation*}
    A:=\Big\{K_{c\frac{\ln n}{n}}^\varphi\subset[X_2,\ldots,X_n]\Big\}.
\end{equation*}
From Lemma \ref{lem:VanVu} we conclude that $\PP(A)\geq 1-(n-1)^{-7}\geq 1-c_1 n^{-7}$, where $c_1\in (1,\infty)$ is a suitable constant.

Next, we define for $n\in\NN$, $E_n:=\EE\Psi(K_\varphi(n))$ and put
\begin{equation}\label{eq:DefinitionFuncionF}
    f(X_1,\dotsc,X_k) := \Psi([X_1,\ldots,X_k]) - E_n, \quad \text{for $k\in\{1,\dotsc,n\}$.}
\end{equation}
Note that $f$ is symmetric and Borel measurable, and can be considered as a function $\bigcup_{k=1}^n K^k \to \RR$.

We set $X:=(X_1,\ldots,X_n)$ and, as mentioned in Section \ref{Background}, we define, for $z\in K$ and $\delta\in(0,\infty)$, the visibility region
\begin{equation}\label{eq:Delta}
    \Delta^\varphi(z,\delta) := \{x\in K\setminus \interior K_\delta^\varphi:[z,x]\cap K_\delta^\varphi=\emptyset\}.
\end{equation}
We also use the convention to write $\Delta(z,\delta)$ if $\varphi=\Vol(K)^{-1}$.

Conditioned on the event $A$, we distinguish two cases.
First, if $X_1\in K_\delta^\varphi$ then removing $X_1$ from the sample $X_1,\ldots,X_n$ does not affect the random polytope $K_\varphi(n)$,
which means that in this case $D_1 f(X)=0$.
So, in what follows we can and will restrict ourself to the case that $X_1\in K\setminus K_\delta^\varphi$.
In this situation, that is, if $X\in A$ and $X_1 \in K\setminus K_\delta^\varphi$, we have that
\begin{align*}
    D_1f (X)
    &=\Psi([X_1,\dotsc,X_n]) - \Psi([X_2,\dotsc,X_n])\\
    &\leq \Psi\Big(\Delta^\varphi\Big(X_1,c\frac{\ln n}{n}\Big)\Big)
    \leq \sup_{z\in K\setminus K_{c\frac{\ln n}{n}}^\varphi} \!\!\!\!\!\Psi\Big(\Delta^\varphi\Big(z,c\frac{\ln n}{n}\Big)\Big)
    \leq C_\psi \!\! \sup_{z\in K\setminus K_{c\frac{\ln n}{n}}^\varphi} \!\!\!\!\! \Vol\Big(\Delta^\varphi\Big(z,c\frac{\ln n}{n}\Big)\Big),
\end{align*}
where in the last step we used that
\begin{equation}\label{eq:PsiVSvol}
    \Psi(B)\leq C_\psi\Vol(B) \quad \text{for all Borel sets $B\subset K\setminus K_{c\frac{\ln n}{n}}^\varphi$},
\end{equation}
which holds by assumption on $\psi\in\cW(K)$ if $n$ is large enough.

In a next step, we shall replace the weighted floating body $K_{c \frac{\ln n}{n}}^\varphi$ by the unweighted floating body $K_{c'\frac{\ln n}{n}}$,
which arises by taking the weight function to be equal to $\Vol(K)^{-1}$.
In fact, Lemma~\ref{lem:ComparisonWeightedVSunweighted} implies the existence of constants $c',N\in(0,\infty)$
such that
\begin{equation}\label{eq:WeightedVSUnweighted}
    K_{\frac{\ln n}{c'n}}
    \subset K_{c\frac{\ln n}{n}}^\varphi
    \subset K_{c'\frac{\ln n}{n}}
\end{equation}
holds for all $n\geq N$. Thus, conditioned on $A$ and that $X_1\in K\setminus K_\delta^\varphi$ we have the upper bound
\begin{equation*}
    D_1 f(X)
    \leq C_\psi\sup_{z\in K\setminus K_{\frac{\ln n}{c' n}}}
        \Vol\Big(\Delta\Big(z,\frac{\ln n}{c' n}\Big)\Big)
    \leq C_\psi\sup_{z\in \bd K}
        \Vol\Big(\Delta\Big(z,\frac{\ln n}{c' n}\Big)\Big).
\end{equation*}
By \eqref{eqn:upper_bound_union} we derive the estimate
\begin{equation*}
    D_1 f(X) \lesssim \frac{\ln n}{n},
\end{equation*}
whenever $X\in A$ and $X_1\in K\setminus K_{c\frac{\ln n}{n}}^\varphi$.
Thus, on $A$ we have that
\begin{equation*}
    D_1f(X) \lesssim \frac{\ln n}{n}\,\mathbf{1}\left\{X_1 \not\in K_{c\frac{\ln n}{n}}^\varphi\right\}.
\end{equation*}
On the complementary event $A^C$ we may use the trivial estimate $|D_1f(X)|\leq \Psi(K)=1$. As a consequence we conclude that, for any $p\in\{1,2,3,4\}$ and for large enough $n$,
\begin{align*}
    \EE[|D_1f(X)|^p]
    &= \EE[|D_1f(X)|^p \mathbf{1}_A] + \EE[|D_1f(X)|^p\mathbf{1}_{A^C}]\\
    &\leq c_3^p \Big(\frac{\ln n}{n}\Big)^p\ \PP(A)\ \Psi\Big(K\setminus K_{c\frac{\ln n}{n}}^\varphi \Big) + (1-\PP(A))
    \lesssim \Big(\frac{\ln n}{n}\Big)^p\ \Vol\Big(K\setminus K_{c\frac{\ln n}{n}}^\varphi\Big),
\end{align*}
where $c_3\in(0,\infty)$ is a suitable constant and where we used \eqref{eq:PsiVSvol} again as well as the estimate $\PP(A)\geq 1-c_1n^{-7}$. Applying now Lemma \ref{lem:ComparisonWeightedVSunweighted} and Lemma \ref{lem:VolumeWetPart} we see that
\begin{equation*}
    \EE[|D_1f(X)|^p]
    \lesssim\Big(\frac{\ln n}{n}\Big)^p\Vol\Big(K\setminus K_{\frac{\ln n}{c' n}}\Big)
    \lesssim\Big(\frac{\ln n}{n}\Big)^{p+\frac{2}{d+1}}
\end{equation*}
whenever $n$ is sufficiently large, i.e., if $n\geq N$. This completes the proof.
\end{proof}

The previous bound for moments of the first-order difference operator can now directly applied to the two terms in the normal approximation bound from Lemma \ref{lem:CLTLachiezeReyPeccati} that involve $\gamma_3$ and $\gamma_4$.

\begin{corollary}\label{cor:Gamma3+4}
    There are constants $c,N\in(0,\infty )$ not depending on $n$ such that
    \begin{align*}
        \frac{n}{(\Var f(X))^{3/2}}\ \gamma_3
        &\leq c\ (\ln n)^{3+\frac{2}{d+1}}\ n^{-\frac{1}{2}+\frac{1}{d+1}}.
    \intertext{and}
        \frac{\sqrt{n}}{\Var f(X)}\sqrt{\gamma_4}
        &\leq c\ (\ln n)^{2+\frac{1}{d+1}}\ n^{-\frac{1}{2}+\frac{1}{d+1}},
    \end{align*}
    for all $n\geq N$.
\end{corollary}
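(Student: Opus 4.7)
The plan is to combine two previously established inputs: the sharp moment bound for the first-order difference operator and the lower variance bound from Theorem \ref{thm:LowerVarianceBound}. For $p \in \{3,4\}$, the bound furnished in the course of the proof of Lemma \ref{lem:FirstOrderDifferenceOperators} (before the final estimate on $\Vol(K\setminus K^\varphi_{c\ln n/n})$) is
\begin{equation*}
    \EE|D_1 f(X)|^p \lesssim \Big(\frac{\ln n}{n}\Big)^p \Vol\Big(K\setminus K^\varphi_{c\ln n/n}\Big),
\end{equation*}
which, after comparing the weighted and unweighted floating bodies via Lemma \ref{lem:ComparisonWeightedVSunweighted} and invoking Lemma \ref{lem:VolumeWetPart}, yields the sharper estimate $\EE|D_1 f(X)|^p \lesssim (\ln n/n)^{p+2/(d+1)}$. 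Together with Theorem \ref{thm:LowerVarianceBound}, which gives $\Var f(X) \gtrsim n^{-(d+3)/(d+1)}$, the bounds for $\gamma_3$ and $\gamma_4$ then follow by direct computation.

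For the first inequality I would substitute $\gamma_3 \lesssim (\ln n/n)^{3+2/(d+1)}$ and $\Var f(X)^{-3/2} \lesssim n^{3(d+3)/(2(d+1))}$ into $n\gamma_3/\Var f(X)^{3/2}$ and collect powers of $n$. The resulting exponent is
\begin{equation*}
    1 - \Big(3+\tfrac{2}{d+1}\Big) + \tfrac{3(d+3)}{2(d+1)} = \tfrac{-4(d+1) + 3d + 5}{2(d+1)} = \tfrac{1-d}{2(d+1)} = -\tfrac{1}{2} + \tfrac{1}{d+1},
\end{equation*}
with a logarithmic factor of $(\ln n)^{3+2/(d+1)}$, matching the claim.

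For the second inequality the argument is analogous: from $\gamma_4 \lesssim (\ln n/n)^{4+2/(d+1)}$ we get $\sqrt{\gamma_4} \lesssim (\ln n/n)^{2+1/(d+1)}$, and with $\Var f(X)^{-1} \lesssim n^{(d+3)/(d+1)}$ the exponent of $n$ in $\sqrt{n}\sqrt{\gamma_4}/\Var f(X)$ becomes
\begin{equation*}
    \tfrac{1}{2} - \Big(2+\tfrac{1}{d+1}\Big) + \tfrac{d+3}{d+1} = -\tfrac{3}{2} + \tfrac{d+2}{d+1} = -\tfrac{1}{2} + \tfrac{1}{d+1},
\end{equation*}
again with the correct power $(\ln n)^{2+1/(d+1)}$ of the logarithm.

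No serious obstacle is expected: all the heavy lifting has already been done in Theorem \ref{thm:LowerVarianceBound} and Lemma \ref{lem:FirstOrderDifferenceOperators}. The only subtlety is that one must actually use the refined moment estimate $(\ln n/n)^{p + 2/(d+1)}$ arising from the volume of the wet part rather than the weaker bound $(\ln n/n)^p$ alone, since the crude bound would only yield an exponent $(5-d)/(2(d+1))$ for $\gamma_3$, falling short of the target. Once the correct moment bound is in place, the corollary reduces to a bookkeeping of exponents.
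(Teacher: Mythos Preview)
Your proposal is correct and follows essentially the same approach as the paper: both combine the refined moment bound $\EE|D_1 f(X)|^p \lesssim (\ln n/n)^{p+2/(d+1)}$ obtained at the end of the proof of Lemma~\ref{lem:FirstOrderDifferenceOperators} with the lower variance bound from Theorem~\ref{thm:LowerVarianceBound}, and then reduce the corollary to a verification of exponents. Your additional remark that the weaker bound $(\ln n/n)^p$ from the lemma's statement would not suffice (yielding the exponent $(5-d)/(2(d+1))$ for the $\gamma_3$ term) is a correct and useful observation that the paper leaves implicit.
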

\begin{proof}
By definition, $\gamma_3=\EE|D_1f(X)|^3$ and from Lemma \ref{lem:FirstOrderDifferenceOperators} and the lower variance bound in Theorem \ref{thm:LowerVarianceBound} we conclude that
\begin{align*}
    \frac{n}{(\Var f(X))^{3/2}}\ \gamma_3
    &\lesssim \frac{n}{\big(n^{-\frac{d+3}{d+1}}\big)^{3/2}} \Big(\frac{\ln n}{n}\Big)^{3+\frac{2}{d+1}}
    = (\ln n)^{3+\frac{2}{d+1}}\ n^{-\frac{1}{2}+\frac{1}{d+1}}.
\intertext{Similarly,}
    \frac{\sqrt{n}}{\Var f(X)}\sqrt{\gamma_4}
    &\lesssim \frac{\sqrt{n}}{n^{-\frac{d+3}{d+1}}} \sqrt{\Big(\frac{\ln n}{n}\Big)^{4+\frac{2}{d+1}}}
    = (\ln n)^{2+\frac{1}{d+1}}\ n^{-\frac{1}{2}+\frac{1}{d+1}},
\end{align*}
for all large enough $n$. This completes the proof.
\end{proof}

\subsection{Dealing with second-order difference operators}

In this section we will deal with second-order difference operators and complete the proof of Theorem \ref{thm:WeightedVolumeEuclidean}.
In what follows we shall use the same notation as in the previous section, and we let $Y, Y'\!\!, Z, Z'$ be a recombination of our random vector $X=(X_1,\ldots,X_n)$
and let $f$ be given by \eqref{eq:DefinitionFuncionF}.
Again, $c,c_1,c_2$ etc.\ will denote positive and finite constants, which are independent from the parameter $n$.
Our two main estimates read as follows.

\begin{lemma}\label{lem:SecondOrderDifferences}
There are constants $C,N\in(0,\infty)$ independent of $n$ such that
\begin{align*}
    \EE\big[\mathbf{1}\{D_{1,2}f(Y)\neq 0\}\mathbf{1}\{D_{1,3}f(Y')\neq 0\}\,(D_2f(Z))^2(D_3f(Z'))^2\big] &\leq C\Big(\frac{\ln n}{n}\Big)^{6+\frac{2}{d+1}}
\intertext{and}
    \EE\big[\mathbf{1}\{D_{1,2}f(Y)\neq 0\}\,(D_1f(Z))^2(D_2f(Z'))^2\big] &\leq C\Big(\frac{\ln n}{n}\Big)^{5+\frac{2}{d+1}}.
\end{align*}
for all $n\geq N$.
\end{lemma}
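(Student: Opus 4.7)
The plan is to imitate the strategy used in the proof of Lemma~\ref{lem:FirstOrderDifferenceOperators} and extend it to second-order differences. First, fix $\delta=c\ln n/n$ with $c>0$ chosen large enough for Lemma~\ref{lem:VanVu} to apply with $\beta=7$, and introduce the good event
\begin{equation*}
    A:=\bigcap_{W\in\{Y,Y',Z,Z'\}}\big\{K_\delta^\varphi\subset [W_{\neg 1,2,3}]\big\}.
\end{equation*}
Since each $W_{\neg 1,2,3}$ is a sample of $n-3$ i.i.d.\ points with distribution $\Phi$, a union bound gives $\PP(A^c)\lesssim n^{-7}$. Combined with the trivial estimate $|D_i f|\leq \Psi(K)=1$, the contribution of $A^c$ to either expectation is $O(n^{-7})$, which is dominated by the two target rates for all $d\geq 2$.

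On the event $A$, the argument from the proof of Lemma~\ref{lem:FirstOrderDifferenceOperators} applies verbatim to any recombination $W$ and any index $i\in\{1,2,3\}$, producing the deterministic bound
\begin{equation*}
    |D_i f(W)|\leq C\,\frac{\ln n}{n}\,\mathbf{1}\{W_i\notin K_\delta^\varphi\},
\end{equation*}
so that in particular $(D_i f(W))^2\leq C^2(\ln n/n)^2$. The new ingredient is the following geometric observation: on $A$, if $D_{1,2}f(Y)\neq 0$ then necessarily $Y_1,Y_2\in K\setminus K_\delta^\varphi$ and the visibility regions $\Delta^\varphi(Y_1,\delta)$ and $\Delta^\varphi(Y_2,\delta)$ overlap. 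Indeed, should $Y_1$ lie in $K_\delta^\varphi$, then $Y_1\in [Y_{\neg 1,2}]\subset [Y_{\neg 1}]$, forcing $[Y]=[Y_{\neg 1}]$ and $[Y_{\neg 2}]=[Y_{\neg 1,2}]$ and hence $D_{1,2}f(Y)=0$; if instead $Y_1,Y_2$ lie in the wet part but their visibility regions are disjoint, then $[Y]\setminus [Y_{\neg 1,2}]$ decomposes as a disjoint union of a part attributable to $Y_1$ and a part attributable to $Y_2$, and the four $\Psi$-terms defining $D_{1,2}f(Y)$ cancel in pairs. By Corollary~\ref{cor:est_vis_region}, the set $U(Y_1):=\bigcup\{\Delta^\varphi(x,\delta):x\in\Delta^\varphi(Y_1,\delta)\}$ that must contain $Y_2$ has volume at most $C'\delta$.

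With these two ingredients, both bounds reduce to Fubini bookkeeping. For the second estimate, Lemmas~\ref{lem:ComparisonWeightedVSunweighted} and~\ref{lem:VolumeWetPart} together with the boundedness of $\varphi$ on $K\setminus K_\delta^\varphi$ give
\begin{equation*}
    \PP\big(D_{1,2}f(Y)\neq 0\big)\leq \Phi(K\setminus K_\delta^\varphi)\cdot\sup_{y}\Phi(U(y))\lesssim \Big(\frac{\ln n}{n}\Big)^{\frac{2}{d+1}}\cdot \frac{\ln n}{n},
\end{equation*}
and multiplying by the deterministic factor $(D_1 f(Z))^2(D_2 f(Z'))^2\lesssim (\ln n/n)^4$ on $A$ yields $(\ln n/n)^{5+2/(d+1)}$. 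For the first estimate I would handle the joint probability by conditioning on whatever (if any) coordinate of index $1$ is shared between $Y$ and $Y'$; in the worst case $Y_1=Y'_1$, conditioned on this common value $y$ the events $\{Y_2\in U(y)\}$ and $\{Y'_3\in U(y)\}$ depend on coordinates at distinct indices of $\{X,X',X''\}$ and are therefore independent, each of conditional probability $\lesssim \ln n/n$. Integrating $y$ over the wet part gives joint probability $\lesssim (\ln n/n)^{2+2/(d+1)}$, which combined with $(\ln n/n)^4$ from the two squared first-order differences delivers the target $(\ln n/n)^{6+2/(d+1)}$.

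I expect the main obstacle to be the geometric step: carefully checking that disjoint visibility regions of $Y_1$ and $Y_2$ force the four convex hulls appearing in $D_{1,2}f$ to split in a way that makes the four $\Psi$-contributions cancel pairwise. Once this decoupling is established, the remaining work is essentially the mechanical union-bound/Fubini argument above, exploiting the independence of coordinates across the three copies $X,X',X''$.
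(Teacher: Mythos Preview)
Your plan is correct and follows essentially the same route as the paper's proof: define the high-probability event that the weighted floating body sits inside all four recombinations with the first three coordinates removed, use the pointwise bound $|D_i f(W)|\lesssim (\ln n/n)\,\mathbf{1}\{W_i\notin K_\delta^\varphi\}$ on that event, observe that $D_{1,2}f(Y)\neq 0$ forces $Y_2$ into the iterated visibility region $U(Y_1)=\bigcup_{x\in\Delta^\varphi(Y_1,\delta)}\Delta^\varphi(x,\delta)$ of volume $\lesssim \delta$, and then do the Fubini bookkeeping (the paper also singles out the case $Y_1=Y_1'$ as the dominant one). The only point to tidy up is that Corollary~\ref{cor:est_vis_region} is stated for the \emph{unweighted} visibility regions, so before invoking it you should pass from $\Delta^\varphi(\,\cdot\,,c\ln n/n)$ to $\Delta(\,\cdot\,,\ln n/(c'n))$ via Lemma~\ref{lem:ComparisonWeightedVSunweighted}, exactly as you already do for the wet-part probability.
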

\begin{proof}
We denote by $A$ the event
\begin{equation*}
    A:=\Bigg\{K_{c \frac{\ln n}{n}}^\varphi \subset \bigcap_{W\in\{Y,Y'\!\!,Z,Z'\}}[W_4,\dotsc,W_n]\Bigg\},
\end{equation*}
where $c$ is the constant implied by Lemma \ref{lem:VanVu} for the choice $\beta=7$ (note that in the proof of Lemma \ref{lem:FirstOrderDifferenceOperators} we used the symbol $A$ for a different event). Lemma \ref{lem:VanVu} implies that $\PP(A)\geq 1-(n-3)^{-7}\geq 1-c_1 n^{-7}$ for some constant $c_1\in(1,\infty)$. Recall the definition of the visibility region $\Delta^\varphi(z,\delta)$ from \eqref{eq:Delta}. On the event $A$, supposing additionally that $Y_1=Y_1'$, we have the inclusion
\begin{align*}
    &\hspace{-1cm}\{D_{1,2}f(Y)\neq 0,D_{1,3}f(Y')\neq 0\}\\
    &\subset \Big\{\{Y_1,Y_2,Y_3'\}\subset K\setminus K_{c\frac{\ln n}{n}}^\varphi\Big\}\\
    &\qquad\qquad\cap\Big\{\Delta^\varphi\Big(Y_1,c\frac{\ln n}{n}\Big)\cap\Delta^\varphi\Big(Y_2,c\frac{\ln n}{n} \Big)\neq \emptyset\Big\}\\
    &\qquad\qquad\cap\Big\{\Delta^\varphi\Big(Y_1,c\frac{\ln n}{n}\Big)\cap\Delta^\varphi\Big(Y_3',c\frac{\ln n}{n}\Big)\neq \emptyset\Big\}\\
    &\subset\Big\{Y_1\in K\setminus K_{c\frac{\ln n}{n}}^\varphi\Big\}\cap\Bigg\{\{Y_2,Y_3'\}\subset\bigcup_{x\in\Delta^\varphi\big(Y_1,c\frac{\ln n}{n}\big)}\Delta^\varphi\Big(x,c\frac{\ln n}{n}\Big)\Bigg\}.
\end{align*}
Together with the bound for the first-order difference operator provided in Lemma \ref{lem:FirstOrderDifferenceOperators} this implies that
\begin{multline*}
    \EE\big[\mathbf{1}\{D_{1,2}f(Y)\neq 0\}\ \mathbf{1}\{D_{1,3}f(Y')\neq 0\}\ \mathbf{1}\{Y_1=Y_1'\}\ \mathbf{1}_A\,(D_2f(Z))^2\ (D_3f(Z'))^2\big]\\
    \lesssim \PP\Big(Y_1\in K\setminus K_{c\frac{\ln n}{n}}^\varphi\Big)
    \Bigg(\sup_{z\in K\setminus K_{c\frac{\ln n}{n}}^\varphi}
        \Psi\Bigg(\bigcup_{x\in\Delta^\varphi\big(z,c\frac{\ln n}{n}\big)}\Delta^\varphi\Big(x,c\frac{\ln n}{n}\Big)\Bigg)
    \Bigg)^2 \Big(\frac{\ln n}{n}\Big)^4,
\end{multline*}
whenever $n$ is large enough. Now, using \eqref{eq:PsiVSvol}, Lemma \ref{lem:ComparisonWeightedVSunweighted} and Lemma \ref{lem:VolumeWetPart} we see that
\begin{multline*}
    \PP\Big(Y_1\in K\setminus K_{c\frac{\ln n}{n}}^\varphi\Big)
    = \Psi\Big(K\setminus K_{c\frac{\ln n}{n}}^\varphi\Big)
    \lesssim \Vol\Big(K\setminus K_{c\frac{\ln n}{n}}^\varphi\Big)
    \lesssim\Vol\Big(K\setminus K_{\frac{\ln n}{c' n}}\Big)
    \lesssim\Big(\frac{\ln n}{n}\Big)^\frac{2}{d+1}.
\end{multline*}
Similarly, we conclude that
\begin{equation*}
    \sup_{z\in K\setminus K_{c\frac{\ln n}{n}}^\varphi} \!\!\Psi\left(
        \bigcup_{x\in\Delta^\varphi\big(z,c\frac{\ln n}{n}\big)}\!\!\!\Delta^\varphi\Big(x,c\frac{\ln n}{n}\Big)\right)
    \lesssim \sup_{z\in K\setminus K_{\frac{\ln n}{c' n}}} \!\!\Vol\left(
        \bigcup_{x\in\Delta\big(z,\frac{\ln n}{c' n}\big)}\!\!\!\Delta\Big(x,\frac{\ln n}{c' n}\Big)\right)\!.
\end{equation*}
Since by Lemma \ref{cor:est_vis_region} the union in brackets has diameter of order $(\frac{\ln n}{n})^{1/(d+1)}$,
the last expression is bounded by a constant multiple of $\frac{\ln n}{n}$.
Putting things together yields the bound
\begin{multline}\label{eq:Proof2ndOrder1}
    \EE\big[\mathbf{1}\{D_{1,2}f(Y)\neq 0\}\ \mathbf{1}\{D_{1,3}f(Y')\neq 0\}\ \mathbf{1}\{Y_1=Y_1'\}\ \mathbf{1}_A\,(D_2f(Z))^2\ (D_3f(Z'))^2\big]\\
    \lesssim \Big(\frac{\ln n}{n}\Big)^\frac{2}{d+1}\,\Big(\frac{\ln n}{n}\Big)^2\,\Big(\frac{\ln n}{n}\Big)^4= \Big(\frac{\ln n}{n}\Big)^{6+\frac{2}{d+1}},
\end{multline}
provided that $n\geq N$, where $N\in(0,\infty)$ is a suitable constant independent from $n$. We emphasize that taking $Y_1\neq Y_1'$ leads to a term which is of even lower order, because this leads to an extra factor $\Psi(K\setminus K_{c\frac{\ln n}{n}}^\varphi)$, which in turn is of order $(\frac{\ln n}{n})^{2/(d+1)}$. On the event $A^C$ we bound all terms just by $1$ and recall that $\PP(A^C)\lesssim n^{-7}$. This implies that the bound \eqref{eq:Proof2ndOrder1} is valid also without the indicator function $\mathbf{1}_A$. Thus, the proof of the first claim is complete.

In the same way one can treat the second term. In fact, one has the inclusion
\begin{align*}
    \{D_{1,2}f(Y)\neq 0\}
    \subset \Big\{Y_1\in K\setminus K_{c\frac{\ln n}{n}}^\varphi\Big\}
        \cap\Bigg\{Y_2\in\bigcup_{x\in\Delta^\varphi(Y_1,c\frac{\ln n}{n})}\Delta^\varphi\Big(x,c\frac{\ln n}{n}\Big)\Bigg\},
\end{align*}
which together with Lemma \ref{lem:FirstOrderDifferenceOperators} and Lemma \ref{lem:VolumeWetPart} implies the upper bound
\begin{multline*}
    \EE\big[\mathbf{1}\{D_{1,2}f(Y)\neq 0\}\ \mathbf{1}_A\ (D_1f(Z))^2\ (D_2f(Z'))^2\big]\\
    \lesssim \PP\Big(Y_1\in K_{c\frac{\ln n}{n}}^\varphi\Big) \sup_{z\in K\setminus K_{c\frac{\ln n}{n}}^\varphi}
        \Psi\Bigg(\bigcup_{x\in\Delta^\varphi\big(z,c\frac{\ln n}{n}\big)}\Delta^\varphi\Big(x,c\frac{\ln n}{n}\Big)\Bigg) \Big(\frac{\ln n}{n}\Big)^4
    \lesssim \Big(\frac{\ln n}{n}\Big)^{5+\frac{2}{d+1}},
\end{multline*}
if $n$ is large enough. On the event $A^C$ we bound again all terms by $1$ and use the bound for $\PP(A^C)$ to conclude that the previous estimate is still valid without the indicator function $\mathbf{1}_A$. This completes the proof.
\end{proof}

\begin{corollary}\label{cor:Gamma1+2}
There are constants $c,N\in(0,\infty)$ such that
\begin{align*}
    \frac{n^{3/2}}{\Var f(X)}\sqrt{\gamma_1} &\leq c(\ln n)^{3+\frac{1}{d+1}}\ n^{-\frac{1}{2}+\frac{1}{d+1}},
\intertext{and}
    \frac{n}{\Var f(X)}\sqrt{\gamma_2} &\leq c(\ln n)^{{\frac{5}{2}}+\frac{1}{d+1}}\ n^{-\frac{1}{2}+\frac{1}{d+1}}.
\end{align*}
for all $n\geq N$.
\end{corollary}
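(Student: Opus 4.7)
The plan is to prove Corollary \ref{cor:Gamma1+2} by a direct substitution of the bounds on $\gamma_1$ and $\gamma_2$ from Lemma \ref{lem:SecondOrderDifferences} together with the lower variance bound from Theorem \ref{thm:LowerVarianceBound}. There is no further probabilistic input required; the claim is essentially a bookkeeping of exponents.

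More precisely, I would first recall that Theorem \ref{thm:LowerVarianceBound} gives the existence of constants $c_0,N_0\in(0,\infty)$ such that $\Var f(X)=\Var\Psi(K_\varphi(n))\geq c_0\, n^{-\frac{d+3}{d+1}}$ for all $n\geq N_0$, and hence $(\Var f(X))^{-1}\lesssim n^{\frac{d+3}{d+1}}$. Taking the supremum in the definition of $\gamma_1$ over all 4-tuples of recombinations $(Y,Y'\!\!,Z,Z')$ and in that of $\gamma_2$ over all 3-tuples $(Y,Z,Z')$, the bounds of Lemma \ref{lem:SecondOrderDifferences} yield directly
\begin{equation*}
\gamma_1\lesssim\Big(\frac{\ln n}{n}\Big)^{6+\frac{2}{d+1}},\qquad \gamma_2\lesssim\Big(\frac{\ln n}{n}\Big)^{5+\frac{2}{d+1}}
\end{equation*}
for all sufficiently large $n$.

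Combining these bounds I would then compute for the first expression
\begin{equation*}
\frac{n^{3/2}}{\Var f(X)}\sqrt{\gamma_1}\lesssim n^{3/2}\cdot n^{\frac{d+3}{d+1}}\cdot(\ln n)^{3+\frac{1}{d+1}}\cdot n^{-3-\frac{1}{d+1}}=(\ln n)^{3+\frac{1}{d+1}}\,n^{\alpha},
\end{equation*}
where
\begin{equation*}
\alpha=\frac{3}{2}+\frac{d+3}{d+1}-3-\frac{1}{d+1}=-\frac{3}{2}+\frac{d+2}{d+1}=-\frac{1}{2}+\frac{1}{d+1}.
\end{equation*}
For the second expression, the analogous computation gives
\begin{equation*}
\frac{n}{\Var f(X)}\sqrt{\gamma_2}\lesssim n\cdot n^{\frac{d+3}{d+1}}\cdot(\ln n)^{\frac{5}{2}+\frac{1}{d+1}}\cdot n^{-\frac{5}{2}-\frac{1}{d+1}}=(\ln n)^{\frac{5}{2}+\frac{1}{d+1}}\,n^{\beta},
\end{equation*}
where $\beta=1+\frac{d+3}{d+1}-\frac{5}{2}-\frac{1}{d+1}=-\frac{3}{2}+\frac{d+2}{d+1}=-\frac{1}{2}+\frac{1}{d+1}$, which matches the target exponent.

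The only step that requires any attention is ensuring that both inequalities hold simultaneously for all $n\geq N$, which is obtained by choosing $N=\max\{N_0,N_1\}$, where $N_1$ is the threshold from Lemma \ref{lem:SecondOrderDifferences}. Since Lemma \ref{lem:SecondOrderDifferences} and Theorem \ref{thm:LowerVarianceBound} do all the heavy lifting, there is no genuine obstacle in the present corollary; it is precisely the statement which feeds the four error terms of Lemma \ref{lem:CLTLachiezeReyPeccati} (together with Corollary \ref{cor:Gamma3+4}) and shows that all of them are of the uniform order $(\ln n)^{3+\frac{2}{d+1}}\,n^{-\frac{1}{2}+\frac{1}{d+1}}$, completing the proof of Theorem \ref{thm:WeightedVolumeEuclidean}.
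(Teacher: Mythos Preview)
Your proposal is correct and follows exactly the same approach as the paper: substitute the bounds on $\gamma_1$ and $\gamma_2$ from Lemma \ref{lem:SecondOrderDifferences} and the lower variance bound from Theorem \ref{thm:LowerVarianceBound}, then carry out the exponent arithmetic. The paper's proof is in fact even terser than yours, writing the two substitutions in a single display without spelling out the intermediate exponent calculations.
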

\begin{proof}
From Lemma \ref{lem:SecondOrderDifferences} and the lower variance bound in Theorem \ref{thm:LowerVarianceBound} we derive that, for sufficiently large $n$,
\begin{align*}
    \frac{n^{3/2}}{\Var f(X)}\sqrt{\gamma_1}
    \lesssim \frac{n^{3/2}}{n^{-\frac{d+3}{d+1}}}\,\sqrt{\Big(\frac{\ln n}{n}\Big)^{6+\frac{2}{d+1}}}
    &= (\ln n)^{3+\frac{1}{d+1}}\ n^{-\frac{1}{2}+\frac{1}{d+1}},
\intertext{and similarly}
    \frac{n}{\Var f(X)}\sqrt{\gamma_2}
    \lesssim \frac{n}{n^{-\frac{d+3}{d+1}}}\sqrt{\Big(\frac{\ln n}{n}\Big)^{5+\frac{2}{d+1}}}
    &= (\ln n)^{{\frac{5}{2}}+\frac{1}{d+1}}\ n^{-\frac{1}{2}+\frac{1}{d+1}}.
\end{align*}
This completes the argument.
\end{proof}

\begin{proof}[Proof of Theorem \ref{thm:WeightedVolumeEuclidean}]
According to Lemma \ref{lem:CLTLachiezeReyPeccati}
\begin{equation*}
    d_{\Wass}\bigg(\cL\Big(\frac{W(n)}{\sqrt{\Var W(n)}}\Big),\cL(Z)\bigg)
    \leq \frac{c\sqrt{n}}{\Var W(n)}\Bigg(\sqrt{n^2\,\gamma_1}+\sqrt{n\,\gamma_2}+\sqrt{\frac{n}{\Var W(n)}}\ \gamma_3+\sqrt{\gamma_4} \Bigg),
\end{equation*}
where $W(n):=f(X)$ is as in \eqref{eq:DefinitionFuncionF}. Notice that $\EE W(n) = 0$ and $0 < \EE W(n)^2 \leq \Psi(K) = 1$.
Using Corollary \ref{cor:Gamma3+4} and Corollary \ref{cor:Gamma1+2} we can find a constant $N\in\NN$ (not depending on $n$) such that
\begin{align*}
    &d_{\Wass}\bigg(\cL\Big(\frac{W(n)}{\sqrt{\Var W(n)}}\Big),\cL(Z)\bigg)\\
    &\qquad\lesssim n^{-\frac{1}{2}+\frac{1}{d+1}}\Big((\ln n)^{3+\frac{1}{d+1}}+(\ln n)^{{\frac{5}{2}}+\frac{1}{d+1}}+(\ln n)^{3+\frac{2}{d+1}}+(\ln n)^{2+\frac{1}{d+1}}\Big)\\
    &\qquad\lesssim (\ln n)^{3+\frac{2}{d+1}}n^{-\frac{1}{2}+\frac{1}{d+1}},
\end{align*}
whenever $n\geq N$. Since the last expression tends to zero, as $n\to\infty$, Theorem \ref{thm:WeightedVolumeEuclidean} follows.
\end{proof}

\section{Proof of Theorems \ref{thm:Sphere}, \ref{thm:Hyperbolic}, \ref{thm:HilbertGeometries}, \ref{thm:dualVolumeExp} and \ref{thm:dualVolume}}\label{sec:ProofNonEuclidean}

It is the purpose of this section to prove Theorems \ref{thm:Sphere}--\ref{thm:dualVolume} from the first section. As we explained after Theorem \ref{thm:WeightedVolumeEuclidean}, they will all follow from the central limit theorem for weighted random polytopes in Euclidean spaces by choosing appropriate weight functions.

\subsection{Central limit theorem in spherical space}

\begin{proof}[Proof of Theorem \ref{thm:Sphere}]
Let $e_1,\ldots,e_{d+1}$ be the standard basis in $\RR^{d+1}$.
Without loss of generality we may assume that $K$ is contained in the open half-sphere $\SS^{d}_+:=\{x\in\RR^{d+1}: x \cdot e_{d+1} >0\}$. Define the gnomonic projection $g:\SS^{d}_+\to\RR^d$ by
\begin{equation*}
    g(x) := \frac{x}{x \cdot e_{d+1}} - e_{d+1},
\end{equation*}
where $\RR^d$ is identified with the hyperplane orthogonal to $e_{d+1}$.
It is well known that the image measure of $\Vol_s$ under $g$ is the Lebesgue measure on $\RR^d$ with the radially symmetric density
$\psi(x)=(1+\|x\|^2)^{-(d+1)/2}$, $x\in\RR^d$, cf.\ \cite[Proposition 4.2]{BesauWernerSpherical}.
Thus, the random variable $\Vol_s(K_s(n))$ from the statement of the theorem has the same distribution as $\Psi(\overbar{K}_{\psi}(n))$,
where $\overbar{K}:=g(K)$ is the image of $K$ under $g$, which implies that $\overbar{K}$ is a (Euclidean) convex body, and $\overbar{K}_\psi(n)$ is the convex hull of $n$ i.i.d.\ random points in $\overbar{K}$ having probability density $\psi/\int_{\overbar{K}}\psi(x)\,\dint x$.

Since the gnomonic projection $g$ is a diffemorphism between the open half-sphere and the Euclidean space we observe that $\bd K$ is a twice differentiable $(d-1)$-submanifold of $\SS^d_+$ if and only if $\overbar{K}$ is a twice differentiable submanifold of $\RR^d$.
Furthermore, the Gauss--Kronecker curvature of $\bd K$ is strictly positive (with respect to the spherical ambient space) if and only if it is strictly positive on $\bd \overbar{K}$ (with respect to the Euclidean ambient space), see \cite[Equation (3.24)]{BesauWernerSpaceForms} for an explicit formula relating the two quantities.
Hence $\overbar{K} \in \cK_+^2(\RR^d)$ if and only if $K\in\cK_+^2(\SS^d_+)$.
Since $\psi\in\cW(\overbar{K})$, we find that $\overbar{K}_{\psi}(n)$ is a weighted random polytope in a Euclidean space $\RR^d$ to which Theorem \ref{thm:WeightedVolumeEuclidean} applies. The claim follows since $\Vol_s(K_s(n)) = \Psi(\overbar{K}_{\psi}(n))$.
\end{proof}

\subsection{Central limit theorem in hyperbolic space}

\begin{proof}[Proof of Theorem \ref{thm:Hyperbolic}]
Let $e_1,\ldots,e_{d+1}\in\RR^{d,1}$ be a Lorentz-orthonormal basis such that $e_{d+1}\in\HH^d$.
We define the gnomonic projection $h:\HH^d\to\RR^d$ by
\begin{equation*}
    h(x) := \frac{x}{x\circ e_{d+1}}+e_{d+1},
\end{equation*}
where $\RR^d$ is again identified with the hyperplane $\{x\in\RR^{d,1}:  x\cdot e_{d+1}=0\}$. It is known that the image measure of $\Vol_h$ under $h$ is the Lebesgue measure on the Euclidean unit ball $B_2^d\subset\RR^d$ with the radially symmetric density $\psi(x)=(1-\|x\|^2)^{-(d+1)/2}$, $x\in \interior B_2^d$, see \cite[Section 3]{BesauWernerSpaceForms}.
As a consequence the random variable $\Vol_h(K_h(n))$ from Theorem \ref{thm:Hyperbolic} has the same distribution as $\Psi(\overbar{K}_\psi(n))$,
where $\overbar{K}:=h(K)$ is again a (Euclidean) convex body and $\overbar{K}_\psi(n)$ is the convex hull of $n$ independent and identically distributed random points on $\overbar{K}$, distributed according to the density $\psi/\int_{\overbar{K}}\psi(x)\,\dint x$.
We again observe that $\overbar{K}\in\cK_+^2(\RR^d)$ since $K\in\cK_+^2(\HH^d)$ and that $\psi\in\cW(\overbar{K})$.
So, as in the spherical case we can apply Theorem \ref{thm:WeightedVolumeEuclidean} and the result follows.
\end{proof}

\begin{remark}\label{rem:projectivemodel}
The gnomonic projection $h$ used in the previous proof is in fact an isometry between the hyperboloid model and the so-called projective model for $\HH^d$ in $B_2^d$.
\end{remark}

\subsection{Central limit theorems in Hilbert Geometries}

\begin{proof}[Proof of Theorem \ref{thm:HilbertGeometries}]
Let $x\in\interior C$ and define for $v\in\RR^d$ the Minkowski norm $\|v\|_x:=\frac{1}{2}(t_+^{-1}+t_-^{-1})$,
where $t_\pm$ are determined by the condition that $x\pm t_\pm v\in\bd C$.
We let $B_{C,x}:=\{v\in\RR^d:\|v\|_x\leq 1\}$ be the associated unit ball (which is known as the harmonic symmetrization of $C$ in $x$)
and denote by $B_{C,x}^\circ:=\{w\in\RR^d: v\cdot w \leq 1\text{ for all }v\in B_{C,x}\}$ the polar of $B_{C,x}$.
The Lebesgue density of the Busemann volume $\Vol_C^{\Bu}$ can then be expressed as $\psi_{\Bu}(x)=\Vol(B_2^d)/\Vol(B_{C,x})$,
$x\in\interior C$.
On the other hand, the Lebesgue density of the Holmes--Thompson volume $\Vol_C^{\HT}$ is given by $\psi_{\HT}(x)=\Vol(B_{C,x}^\circ)/\Vol(B_2^d)$, $x\in\interior C$.
Both functions are continuous and their normalized restrictions $\psi_{\Bu}/\int_K\psi_{\Bu}(x)\,\dint x$ and
$\psi_{\HT}/\int_K\psi_{\HT}(x)\,\dint x$ to $K\in\cK_+^2(C)$ are strictly positive, i.e.,
$\psi_{\Bu},\psi_{\HT}\in\cW(K)$.
This once again puts us into the position to apply Theorem \ref{thm:WeightedVolumeEuclidean}, which then yields the result.
\end{proof}

\begin{remark}\label{rem:HilbertVolumes}
    As already remarked in Section \ref{subsec:CLThilbert} the notion of Holmes--Thompson volume $\Vol^{\HT}_C$
    is closely related to the symplectic structure on $\RR^{2d}$.
    Using the notation introduced in the proof of Theorem \ref{thm:HilbertGeometries} above, we can now explain this connection in more detail.
    At each point $x\in\interior C$ we defined the Minkowski norm $\|\,\cdot\,\|_x$ on the tangent space ${\rm Tan}(\RR^d,x)$ of $\RR^d$ at $x$
    (which in turn was identified with $\RR^d$).
    This induces the Finsler structure $F_C:{\rm Tan}(C)\to[0,\infty), (x,v)=\|v\|_x$ and turns $(\interior C,F_C)$ into a Finsler manifold.
    Here ${\rm Tan}(C)$ is the tangent bundle of $C$ consisting of all pairs $(x,v)$, where $x\in\interior C$ and $v\in {\rm Tan}(\RR^d,x)$ is a tangent vector of $\RR^d$ at $x$.
    Then, if $A\subset\interior C$ is a measurable set and if $\omega$ denotes the standard symplectic form on the cotangent bundle ${\rm CoTan}(C)$ of $\interior C$ we have that
    \begin{equation*}
        \Vol_C^{\HT}(A) = \frac{1}{n!}\int_{A^*}\omega^n,
    \end{equation*}
    where $A^*=\{(x,v)\in {\rm CoTan}(C):x\in A,v\in\BB_{C,x}^\circ\}$.
    We refer to \cite{AlvarezThompson,PapaTroyanov,ThompsonBook} for details and further background material.
\end{remark}

\subsection{Limit theorems for dual volumes}

\begin{proof}[Proof of Theorem \ref{thm:dualVolumeExp}]
    Using polar coordinates and Lutwak's dual Kubota formula \cite[Theorem 1]{Lutwak:1979} we
    may express the dual volume of a convex body $K\in\cK_+^2(\RR^d)$ with $o\in K$ by
    \begin{equation*}
        \overtilde{V}_j(K) = \bbinom{d}{j} \EE\Vol(K\cap E)
        = \frac{1}{\Vol(B_2^{d-j})} \binom{d-1}{j-1} \int_{K} \|x\|^{j-d}\, \dint x,
    \end{equation*}
    for all $1\leq j <d$, see e.g.\ \cite[Lemma 19]{BHK:2019}. Here, $E\in\Gr_j(\RR^d)$ is a random $j$-dimensional linear subspace of $\RR^d$ distributed according to the rotation invariant Haar probability measure.
    Hence, if we set
    \begin{equation}\label{eqn:psi_j}
        \psi_j(x) := \frac{1}{\Vol(B_2^{d-j}) \overtilde{V}_j(K)} \binom{d-1}{j-1} \|x\|^{j-d},
    \end{equation}
    then $\psi_j\in\cW(K)$ (note that $\psi_j(o)=+\infty$ is not a issue, since $\psi_j$ is still integrable).
    The statement of the theorem follows now by applying \cite[Theorem 3.1]{BoroczkyFodorHug2010}, see \eqref{eqn:BFH_Thm}.
\end{proof}

\begin{proof}[Proof of Theorem \ref{thm:dualVolume}]
    Let $\psi_j$ be defined as in \eqref{eqn:psi_j} and apply Theorem \ref{thm:WeightedVolumeEuclidean} with $\psi=\psi_j$ and $\varphi=1/\Vol(K)$.
\end{proof}

\begin{remark}
    Theorem \ref{thm:dualVolumeExp} and \ref{thm:dualVolume} also hold true for the weighted random polytopes $K_{\varphi}(n)$ if $\varphi\in \cW(K)$. That is, we have that
    \begin{equation*}
        \lim_{n\to \infty} n^{\frac{2}{d+1}} \Big(\overtilde{V}_j(K) - \EE \overtilde{V}_j(K_{\varphi}(n))\Big)
        = \tilde{c}(d,j) \int_{\bd K} \varphi(x)^{-\frac{2}{d+1}} H_{d-1}(x)^{\frac{1}{d+1}}\, \|x\|^{j-d} \,\cH^{d-1}(\dint x),
    \end{equation*}
    and
    \begin{equation*}
        \frac{\overtilde{V}_j(K_\varphi(n))-\EE\overtilde{V}_j(K_{\varphi}(n))}{\sqrt{\Var\overtilde{V}_j(K_{\varphi}(n))}}\overset{d}{\longrightarrow}Z,
    \end{equation*}
    as $n\to\infty$.
\end{remark}


\end{document}